\definecolor{darkblue}{rgb}{0.0,0.0,0.3}
\newtheorem{theorem}{Theorem}[section]
\newtheorem{lemma}[theorem]{Lemma}
\theoremstyle{definition}
\newtheorem{claim}[theorem]{Claim}
\newtheorem{definition}[theorem]{Definition}
\newtheorem{notation}[theorem]{Notation}
\theoremstyle{remark}
\newtheorem{remark}[theorem]{Remark}
\numberwithin{equation}{section}
\newcommand{\bC}{\mathbb{C}}
\newcommand{\bR}{\mathbb{R}}
\newcommand{\bH}{\mathbb{H}}
\newcommand{\bS}{\mathbb{S}}
\newcommand{\Ampere}{Amp\`{e}re}
\newcommand{\Garding}{G\r{a}rding}
\newcommand\norm[1]{\left\lVert#1\right\rVert}
\NewDocumentCommand{\sff}{}{\mathrm{I\!I}}
\DeclareMathOperator{\Span}{Span}
\title[Prescribed Curvature Equations in Warped Product Manifolds]{Starshaped Compact Hypersurfaces in Warped Product Manifolds I: Prescribed Curvature Equations}
\author{Bin Wang}
\address[]{Department of Mathematics, the Chinese University of Hong Kong, Shatin, N.T., Hong Kong.} 
\email{bwang@math.cuhk.edu.hk}
\subjclass[2020]{Primary 53C21; Secondary 35J60}
\keywords{Starshaped compact hypersurfaces, prescribed Weingarten curvature equations, warped product manifolds, curvature estimates, the prescribed curvature measure problem.}
\thanks{}
\begin{document}
\maketitle
\begin{abstract}
We derive global curvature estimates for closed, strictly star-shaped $(n-2)$-convex hypersurfaces in warped product manifolds, which satisfy the prescribed $(n-2)$-curvature equation with a general right-hand side. The proof can be readily adapted to establish curvature estimates for semi-convex solutions to the general $k$-curvature equation. Furthermore, it can also be used to prove the same estimates for $k$-convex solutions to the prescribed curvature measure type equations.

\end{abstract}

\tableofcontents

\section{Introduction}
It is evident that the study of the $\sigma_k$ equation has become a central objective in global differential geometry. A few classical examples include: the Minkowski problem \cite{Alexandroff, Minkowski, Lewy, Cheng-Yau, Nirenberg, Pogorelov-1952, Pogorelov-1978}, the Alexandrov-Chern problem \cite{Alexandrov, Chern, Chern-ICM}, and the problem of finding closed strictly convex hypersurfaces whose Weingarten curvature is prescribed on $\bS^n$ in terms of its inverse Gauss map \cite{Guan-Guan}. More recent examples include: the prescribed curvature measure problem \cite{Guan-Lin-Ma, Guan-Li-Li, Yang}, the $\sigma_k$-Nirenberg problem \cite{LNW-2024, LNW-2021, LLL}, and the Christoffel-Minkowski problem \cite{CM-1,CM-2,CM-3}.

Here we are interested in a natural variation of the aforementioned problems stated as follows: under what conditions a given sufficiently smooth, positive function $\Phi(X,\nu)$ is the $k$-th mean curvature $\sigma_k(\kappa(X))$ of a smooth closed hypersurface $\Sigma$ embedded in a space form of constant sectional curvature as a graph over a sphere? 

Formulating it as a PDE problem \footnote{See e.g. \cite{BDO} for the PDE formulation.}, we are concerned with the existence of star-shaped compact solutions $\Sigma$ in space forms to the curvature equation 
\begin{equation}
\sigma_k(\kappa(X))=\Psi(X,\nu) \quad \text{for all $X \in \Sigma$}, \label{curvature equation 1}
\end{equation}
for which establishing a priori estimates is a crucial ingredient. The $C^0$ and $C^1$ estimates have been derived under some barrier conditions; see e.g. \cite{BLO-1} or \cite{Spruck-Xiao}. However, the $C^2$ estimates had remained open due to the appearance of a gradient term on the right-hand side and can only be obtained in some special cases. 

For $k=1$, the equation is quasi-linear and the required estimates follow from the classical theory of quasi-linear PDEs; see \cite{Treibergs-Wei, BK1, BK2}. While for $k=n$, the equation is of Monge-\Ampere\ type and the estimates follow from Caffarelli-Nirenberg-Spruck's seminal work \cite{CNS1} on the Monge-\Ampere\ equation; see \cite{Oliker-1, Oliker-2, Delanoe, Li}. 

For $2 \leq k \leq n-1$, if $\Psi$ is independent of $\nu$ i.e. $\Psi(X,\nu)=\psi(X)$, then the problem has been completely settled by Caffarelli-Nirenberg-Spruck \cite{CNS4} in $\bR^{n+1}$, by Jin-Li \cite{Jin-Li} in $\bH^{n+1}$, by Li-Oliker \cite{Li-Oliker} in $\bS^{n+1}$ and by De Andrade-Barbosa-De Lira \cite{DBD} in warped product manifolds. On the other hand, if the right-hand side only depends on $\nu$ i.e. $\Psi(X,\nu)=\varphi(\nu)$, this problem is solved in $\bR^{n+1}$ for convex solutions by Guan-Guan in \cite{Guan-Guan}. Ivochkina \cite{Ivochkina-1, Ivochkina-2} considered the Dirichlet problem for \eqref{curvature equation 1} in bounded domains in $\bR^n$ and obtained the $C^2$ estimates under some extra conditions on the dependence of $\Psi$ on $\nu$.

This longstanding problem of obtaining $C^2$ estimates for equation \eqref{curvature equation 1} when $2 \leq k \leq n-1$ and when the right-hand side depends on $\nu$ has recently been raised again by Guan, Li and Li in \cite[Remark 3.5]{Guan-Li-Li}: suppose that there is an a priori $C^1$ bound of $\Sigma$. Can one conclude there is an apriori $C^2$ bound of $\Sigma$ in terms of the $C^1$ norm of $\Sigma$, $\Psi$, $n$ and $k$?

\begin{remark}
The major motivations for studying this problem may come from two aspects \footnote{This question is also a part of one of Yau's problems \cite[problem 59]{Yau-problems}, which concerns the existence of closed hypersurfaces of prescribed genus.}: (i) when $\Psi=\langle X,\nu \rangle \psi(X)$, it corresponds to the equation of the prescribed curvature measure problem, which is important in the study of convex geometry; see \cite{Guan-Lin-Ma, Guan-Li-Li, Yang}. (ii) once we know how to obtain the global curvature bound, the same idea can be applied to obtain global $C^2$ estimates and Pogorelov type interior estimates for the $k$-Hessian equations $\sigma_k(D^2u)=f(x,u,Du)$; see \cite[Proposition 2.3]{Guan-Ren-Wang}, \cite[Corollary 3]{Ren-Wang-1}, \cite[Corollary 5]{Ren-Wang-2}, and \cite{Li-Ren-Wang, Tu} for demonstrations.
\end{remark}

In 2015, a major breakthrough was made by Guan, Ren and Wang \cite{Guan-Ren-Wang}: they established global $C^2$ estimates for 
\begin{itemize}
\item convex hypersurfaces satisfying $\sigma_k(\kappa(X))=\Psi(X,\nu)$ and 
\item 2-convex hypersurfaces satisfying $\sigma_2(\kappa(X))=\Psi(X,\nu)$.
\end{itemize}
Their method was to employ a new test function 
\begin{equation}\label{Guan-Ren-Wang test function}
Q=\begin{cases}
\frac{1}{2}\log \sum_{l} \kappa_{l}^2 - N \log \langle X,\nu \rangle & \text{for convex hypersurfaces}\\
\log \log \sum_{l} e^{\kappa_l}-(1+\varepsilon) \log \langle X,\nu\rangle + \frac{a}{2}|X|^2 & \text{for $2$-convex hypersurfaces} 
\end{cases}
\end{equation}
which is nonlinear in principal curvatures but with sufficiently good convexity properties; this helped them overcome difficulties caused by allowing $\Psi$ to depend on $\nu$. See also the simpler proofs by Chu \cite{Chu} and Spruck-Xiao \cite{Spruck-Xiao} for both results respectively; Chen-Li-Wang \cite{Chen-Li-Wang} later generalized these two results to warped-product manifolds.

By using a modified version of the above test function
\begin{equation}
Q=\log \log \sum_{l} e^{\kappa_l}-N\log \langle X,\nu \rangle, \label{Ren-Wang test function}
\end{equation} Ren and Wang \cite{Ren-Wang-1, Ren-Wang-2} solved, in a rather powerful manner, the same problem in $\bR^{n+1}$ when $k=n-1$ and $k=n-2$. Our first result is a simpler proof for their $C^2$ estimates and we extend them to warped product manifolds. 
\begin{theorem} \label{theorem 2k>n}
Let $k=n-1, n \geq 3$ or $k=n-2, n \geq 5$. Suppose that $\Sigma$ is a closed strictly star-shaped $k$-convex hypersurface in a warped product manifold $\overline{M}$ satisfying the curvature equation \eqref{curvature equation 1} for some positive function $\Psi(X,\nu) \in C^2(\Gamma)$, where $\Gamma$ is an open neighborhood of the unit normal bundle of $\Sigma$ in $\overline{M} \times \bS^n$. Assume that the warping function of $\overline{M}$ is positive and has positive derivative. Then there exists some constant $C>0$ depending only on $n,k, \norm{\Sigma}_{C^1}, \inf \Psi$, $\norm{\Psi}_{C^2}$ and the curvature $\overline{R}$ of $\overline{M}$ such that
\[\max_{X \in \Sigma} \kappa_{\max}(X) \leq C.\]
\end{theorem}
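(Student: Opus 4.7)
The plan is to follow the Ren–Wang test function strategy of \eqref{Ren-Wang test function}, adapted to the warped product setting. Let $\kappa_1 \geq \kappa_2 \geq \cdots \geq \kappa_n$ denote the ordered principal curvatures and consider a test function of the form
\[ Q = \log \log \sum_{l} e^{\kappa_l} - N \log \langle X,\nu \rangle + \tfrac{a}{2}\phi(r), \]
where $\langle X,\nu \rangle$ is understood via the conformal Killing vector field $\lambda(r)\partial_r$ of the warped product, $\phi$ is a suitable function of the radial coordinate, and $N, a$ are positive constants to be chosen. Assume $Q$ attains its maximum at $X_0 \in \Sigma$, and work in a normal frame $\{e_i\}$ diagonalizing $h_{ij}$ at $X_0$. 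The goal is to bound $\kappa_1(X_0)$ by a constant depending only on the allowed quantities.

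The first step is to carry out the standard sequence of computations at $X_0$. First-order conditions $\nabla_i Q = 0$ tie the gradient of the approximating maximum $\log\sum e^{\kappa_l}$ to the gradient of $\log\langle X,\nu\rangle$, which in the warped setting involves $\lambda'(r)$ and the second fundamental form; second-order conditions give $\sigma_k^{ii} Q_{ii} \leq 0$. Differentiating the equation \eqref{curvature equation 1} twice and applying the Simons/Codazzi identities on $\overline{M}$ yields
\[ \sigma_k^{ii} h_{ii,jj} + \sigma_k^{pq,rs} h_{pq,j}h_{rs,j} = \Psi_{jj} + (\text{terms involving } \overline{R} \text{ and } \lambda). \]
Substituting into the second-order inequality produces, after standard commutator manipulations, a bound of the form
\[ \mathcal{B} \;\leq\; N \sigma_k^{ii}\kappa_i^2 + C\sum_i \sigma_k^{ii} + C \sigma_k^{ii}|\kappa_i|, \]
where $\mathcal{B}$ collects the \emph{bad} cubic/quartic gradient terms coming from $\sigma_k^{pq,rs} h_{pq,j}h_{rs,j}$ and from the Hessian of $\log\log\sum e^{\kappa_l}$. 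The ambient curvature terms enter only at linear order in $\kappa_1$, hence are harmless provided $\sigma_k^{ii}\kappa_i^2 \gtrsim \kappa_1$ (which holds in the $k$-convex cone when $\kappa_1$ is large).

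The main obstacle, and the reason the exponents $n-1$ and $n-2$ are special, is controlling the bad term $\mathcal{B}$ using the concavity of $\log\log\sum e^{\kappa_l}$ together with the second-order structure of $\sigma_k$. I expect to proceed by a case analysis on the \emph{spread} of the curvatures at $X_0$. \textbf{Case (i):} if $\kappa_n \geq -\theta \kappa_1$ for some small $\theta$, the hypersurface is quasi-convex and one can apply the Chu-type inverse-concavity estimate for $\sigma_k^{1/k}$ to absorb $\mathcal{B}$ directly into the good term $N\sigma_k^{ii}\kappa_i^2$. \textbf{Case (ii):} if $\kappa_n \leq -\theta \kappa_1$, then by the Newton–MacLaurin identities in $\Gamma_{n-1}$ or $\Gamma_{n-2}$, the component $\sigma_k^{nn}$ is comparable to $\sigma_{k-1}(\kappa|n) \gtrsim \kappa_1^{k-1}$, producing a very large good term $\sigma_k^{nn}\kappa_n^2 \gtrsim \kappa_1^{k+1}$ that dominates $\mathcal{B}$; this is where the dimensional constraints $n\geq 3$ (for $k=n-1$) and $n\geq 5$ (for $k=n-2$) enter, as they are precisely what is needed for the bad terms in $\sigma_k^{pq,rs} h_{pq,j}h_{rs,j}$ indexed over the \emph{small} principal curvatures to remain controllable in number.

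Finally, the warped product adaptation is handled by the term $N\log\langle X,\nu\rangle$ together with $\phi(r)$. Since $\lambda, \lambda' > 0$, the support-type function $\langle X,\nu\rangle$ is uniformly positive on any star-shaped $\Sigma$ with $C^1$ bound; its second derivatives in the tangential directions produce (via the Gauss and Weingarten formulas in warped products) precisely the coefficients needed to cancel the extra ambient curvature terms against a multiple of $\sigma_k^{ii}\kappa_i$, once $N$ is chosen large enough. Choosing $a$ small to dominate lower-order residuals from $\bar R$, we conclude $\kappa_1(X_0) \leq C$, and since $Q(X) \leq Q(X_0)$ everywhere, the stated global bound on $\kappa_{\max}$ follows.
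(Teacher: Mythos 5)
Your proposal differs from the paper's actual route in two ways, one cosmetic and one fatal.

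The cosmetic difference is the test function: you revert to the Ren--Wang auxiliary function $\log\log\sum_l e^{\kappa_l}$, whereas the paper's whole point is to avoid this and use the simpler $\log\kappa_{\max}$ (plus a perturbation to handle multiplicity, following Chu). Both are legitimate starting points; the $\log\kappa_{\max}$ choice buys much cleaner good third-order terms of the form $\sum_{i\neq 1}\frac{F^{ii}h_{1ii}^2}{\kappa_1(\kappa_1-\kappa_i)}$, which feed directly into the concavity inequality. If you wanted to use $\log\log\sum e^{\kappa_l}$ you would essentially be reproducing Ren--Wang/Chen--Li--Wang rather than proving anything new, but that is not a gap per se.

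The serious gap is in how you propose to control the bad term $\mathcal{B}$. You replace the Ren--Wang concavity inequality (the paper's Lemma~\ref{Ren-Wang}, coming from Theorem~4 in \cite{Ren-Wang-2} and Theorem~11 in \cite{Ren-Wang-1}) with a dichotomy on the spread of the curvatures: Case~(i) quasi-convex where you invoke inverse-concavity of $\sigma_k^{1/k}$, and Case~(ii) $\kappa_n\leq-\theta\kappa_1$ where you claim $\sigma_k^{nn}\kappa_n^2\gtrsim\kappa_1^{k+1}$ dominates everything. This does not work. First, Case~(ii) is nearly vacuous: for $\kappa\in\Gamma_k$ with $2k>n$, Lemma~\ref{negative kappa} already forces $-\kappa_n<\frac{n-k}{k}\kappa_1$, so $\kappa_n$ can never be $\leq-\theta\kappa_1$ for $\theta\geq\frac{n-k}{k}$; taking $\theta$ small so that Case~(ii) is nonempty means Case~(i) must cope with $\kappa_n$ as negative as roughly $-\frac{n-k}{k}\kappa_1$, which is precisely the regime where the inverse-concavity of $\sigma_k^{1/k}$ is not by itself strong enough to absorb the $-\frac{F^{11}h_{111}^2}{\kappa_1^2}$ term that appears after the first-order condition is substituted. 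Second, your assertion that $\sigma_k^{nn}=\sigma_{k-1}(\kappa|n)\gtrsim\kappa_1^{k-1}$ has no justification without information on $\kappa_2,\dots,\kappa_{k-1}$. Third, your explanation of the constraints $n\geq3$ and $n\geq5$ --- as a count of bad terms over small curvatures --- is not where they come from; they are exactly the ranges in which the Ren--Wang concavity inequality has been verified (Ren--Wang conjecture it for all $2k>n$, but proved only $k=n-1$ and $k=n-2$). Without that inequality, or a genuine replacement for it, the proof does not close.

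In short: the step "absorb $\mathcal{B}$ into the good term" is precisely the hard part of the theorem, and the case analysis you propose does not accomplish it. The paper closes this gap by applying Lemma~\ref{Ren-Wang} to the vector $\xi_i=h_{ii1}$, pairing it with the Andrews/Gerhardt inequality for $-F^{ij,kl}h_{ij1}h_{kl1}$, and using the first-order critical equation to turn the residual $-\frac{F^{11}h_{111}^2}{\kappa_1^2}$ into $O(N^2\kappa_1^2)F^{11}$ which is then dominated by $(N-C)\sum_i F^{ii}\kappa_i^2\gtrsim N\kappa_1\sigma_k$ for $N$ large. You need to supply this (or an equivalent) concavity input; your sketch does not.
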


Several remarks are in order. 

\begin{remark}
First, we shall explicitly point out that the $k=n-1$ part of the theorem in warped product manifolds has already been obtained by Chen-Li-Wang in \cite[Section 6]{Chen-Li-Wang}. Their method was based on the ground-breaking paper of Guan-Ren-Wang \cite{Guan-Ren-Wang} and also on how Ren-Wang solved the $k=n-1$ case in $\bR^{n+1}$ \cite{Ren-Wang-1}.

It is then plausible to think that, since the $k=n-2$ case has also been solved by Ren-Wang in $\bR^{n+1}$ \cite{Ren-Wang-2}, so based on their proof and by imitating the method of Chen-Li-Wang, one can readily derive the $C^2$ estimates for the $\sigma_{n-2}$ equation in warped product manifolds as well. Although this result has not been written and published (as far as we know), this method of proof should have already been known to those authors.
\end{remark}

\begin{remark}
Apart from providing a written proof for the $k=n-2$ case, the main contributions of our theorem \ref{theorem 2k>n} are two-fold: on one hand, we approach the $C^2$ estimate by a different method and our proof is simpler than the ones in \cite{Ren-Wang-1, Ren-Wang-2, Chen-Li-Wang}, due to our synthesis of techniques from several other authors. On the other hand, our method (inherited from Lu's paper \cite{Lu-CVPDE} with minor simplifications) is robust and hence is also applicable in other situations concerning curvature estimates for prescribed curvature equations; see theorem \ref{theorem curvature measure} and theorem \ref{weak convexity} below. We believe that this method could be applied to more equations of similar forms \footnote{In a recent preprint \cite{Tu}, Tu has employed Lu's technique to prove Pogorelov type estimates for semi-convex solutions of $k$-Hessian equations.}.
\end{remark}

\begin{remark}
Note that the results of Guan-Ren-Wang \cite{Guan-Ren-Wang} and Ren-Wang \cite{Ren-Wang-1, Ren-Wang-2} were all obtained in the Euclidean space $\bR^{n+1}$. It is then worthwhile to wonder if the estimates could still hold when the ambient space is e.g. the hyperbolic space $\bH^{n+1}$. Due to the nature of $\bH^{n+1}$ having negative curvature, obtaining the $C^2$ estimate in this case is usually more difficult: there would exist an extra negative term $-h_{11}$ which would not exist in the Euclidean case. 

As demonstrated in the work of Spruck-Xiao \cite{Spruck-Xiao}, this problematic term can be handled by applying the maximum principle to $\log \kappa_{\max}$, instead of the more complicated terms in \eqref{Guan-Ren-Wang test function} and \eqref{Ren-Wang test function}. Our proof follows the exact same spirit: we synthesize the test functions of Spruck-Xiao \cite{Spruck-Xiao} and Ren-Wang \cite{Ren-Wang-1, Ren-Wang-2} i.e. \eqref{Spruck-Xiao test function} and \eqref{Ren-Wang test function}, to get a new one
\begin{equation}
Q=\log \kappa_{\max}-N\log \langle X,\nu\rangle + \alpha \Phi  \label{my test function}
\end{equation}
which utilizes the largest principal curvature and gives more "good" third order terms; and we apply a standard perturbation argument as in \cite{Chu} to resolve the issue that $\kappa_{\max}$ may have multiplicity more than one; then by following Lu's derivation in \cite{Lu-CVPDE,Lu-PAMS}, we can remove the large negative term $-\frac{F^{11}h_{111}^2}{h_{11}^2}$ by invoking Ren-Wang's concavity inequality in \cite{Ren-Wang-1, Ren-Wang-2}; finally, it is the elegant paper \cite{Spruck-Xiao} of Spruck and Xiao that made us realize that our proof can be readily carried over to space forms and later to warped product manifolds after \footnote{This note was initially written for curvature estimates in space forms. Later, Lu pointed out the paper of Chen-Li-Wang and suggested the generalization to warped product manifolds.} reading Chen-Li-Wang's paper \cite{Chen-Li-Wang}.
\end{remark}

\begin{remark}
In \cite[Theorem 4.1]{Chen-Li-Wang}, Chen-Li-Wang used the same test function of Spruck-Xiao i.e. \eqref{Spruck-Xiao test function}, to generalize the $k=2$ case from space forms to warped product manifolds, but they did not seem to realize the potential use of the new test function \eqref{my test function}. Because in that paper, Chen-Li-Wang used
\[Q=\begin{cases}
\frac{1}{2}\log \sum_{l} \kappa_{l}^2 - N \log \langle X,\nu \rangle +\alpha \Phi& \text{for convex hypersurfaces}\\
\log \log \sum_{l} e^{\kappa_l}-N \log \langle X,\nu\rangle + \alpha \Phi & \text{for $(n-1)$-convex hypersurfaces} 
\end{cases}\] which is in the same form of Guan-Ren-Wang's test function \eqref{Guan-Ren-Wang test function}. The reader should note that applying the maximum principle to the largest principal curvature is the key to making our proof simpler than theirs \footnote{This has also been observed by Chu \cite{Chu}, and this is how he provided a simple proof for Guan-Ren-Wang's result.}.
\end{remark}

Since the $k=n-1$ case in warped product manifolds is contained in Chen-Li-Wang's work \cite{Chen-Li-Wang}, we shall re-state the theorem for the $k=n-2$ case alone for emphasis.

\begin{theorem}\label{theorem n-2}
Let $n \geq 3$. Suppose that $\Sigma$ is a closed strictly star-shaped ($n-2$)-convex hypersurface in a warped product manifold $\overline{M}$ satisfying the curvature equation 
\[\sigma_{n-2}(\kappa(X))=\Psi(X,\nu) \quad \text{for all $X \in \Sigma$}\] for some positive function $\Psi(X,\nu) \in C^2(\Gamma)$, where $\Gamma$ is an open neighborhood of the unit normal bundle of $\Sigma$ in $\overline{M} \times \bS^n$. Assume that the warping function of $\overline{M}$ is positive and has positive derivative. Then there exists some constant $C>0$ depending only on $n, \norm{M}_{C^1}, \inf \Psi$, $\norm{\Psi}_{C^2}$ and the curvature $\overline{R}$ of $\overline{M}$ such that
\[\max_{X \in \Sigma} \kappa_{\max}(X) \leq C.\]
\end{theorem}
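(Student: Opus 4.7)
The plan is to follow the strategy outlined in the introduction, adapting the test function \eqref{my test function} and Lu's framework to the $\sigma_{n-2}$ setting on warped product manifolds. Let $F = \sigma_{n-2}^{1/(n-2)}$ and write $F^{ij}$, $F^{ij,pq}$ for its first and second derivatives, and let $\Phi$ denote a primitive of the warping function so that $\nabla \Phi = \phi \partial_r$. First I would consider
\[
Q = \log \kappa_{\max} - N \log \langle X, \nu \rangle + \alpha \Phi,
\]
with constants $\alpha, N > 0$ to be determined. Since $\kappa_{\max}$ is merely Lipschitz where the top eigenvalue has multiplicity, I will use Chu's perturbation trick: fix a local orthonormal frame diagonalizing the Weingarten tensor at the candidate maximum $X_0$, perturb $h_{ij}$ by a symmetric matrix $B$ with $B_{11} = 0$ and $B_{ii}$ distinct for $i \geq 2$, and apply the maximum principle to $\tilde{\lambda}_1 = \lambda_1(h - B)$ which is smooth near $X_0$ and agrees with $\kappa_{\max}$ at $X_0$. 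The $C^0$--$C^1$ bound for $\Sigma$ and the assumption on the warping function give a uniform lower bound on $\langle X, \nu \rangle$ and a uniform two-sided bound on $\Phi$, so these terms play a purely auxiliary role.

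At the maximum point of $Q$, differentiating twice and using the linearized equation $F^{ij} h_{ij\ell\ell} = (\text{known terms})$ yields, after commuting covariant derivatives on $\overline M$ and absorbing curvature terms from $\overline R$ into constants depending on the $C^1$ bound, an inequality of the schematic form
\[
0 \geq \frac{F^{ii} h_{11ii}}{h_{11}} - \frac{F^{ii} h_{11i}^2}{h_{11}^2} - N \frac{F^{ij} \langle X, \nu \rangle_{ij}}{\langle X, \nu \rangle} + N \frac{F^{ij} \langle X, \nu \rangle_i \langle X, \nu \rangle_j}{\langle X, \nu \rangle^2} + \alpha F^{ij} \Phi_{ij} - C \sum F^{ii}.
\]
The Codazzi identity on warped product manifolds (with the extra $\overline R$ terms explicitly tracked as in Chen--Li--Wang) converts $h_{11ii}$ into $h_{ii11}$ plus lower-order curvature terms, after which I can apply the concavity of $F$ to replace $F^{ij} h_{ij11}$ by the known quantity $(\log \Psi)_{11}$ up to a term
\[
-F^{pq,rs} h_{pq 1} h_{rs 1},
\]
which, together with the good term $-F^{ii} h_{11i}^2 / h_{11}^2$, is exactly the combination addressed by Ren--Wang's concavity inequality for $\sigma_{n-2}$. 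The $\alpha \Phi$ term in $Q$ produces a positive contribution $\alpha F^{ij} \Phi_{ij} \geq c\, \alpha \phi' \sum F^{ii}$ which, for $\alpha$ chosen large relative to the constants in the $C^1$ and $\overline R$ bounds, absorbs the $-C \sum F^{ii}$ term arising from the ambient curvature. The $N \log \langle X, \nu \rangle$ term is used, as in Spruck--Xiao and Lu, to create a negative quadratic term in $N$ together with the ``bad'' first-order derivatives $h_{11i}$, which provides the flexibility needed when invoking the Ren--Wang inequality.

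The crux is then the third-order step: I would split the index set into $I = \{ i : \kappa_i \geq -\theta \kappa_1\}$ (``good'') and $J$ (``very negative''), handle $i \in I$ by Ren--Wang's $\sigma_{n-2}$ concavity inequality of \cite{Ren-Wang-2}, which states that for $(n-2)$-convex matrices one has
\[
-F^{pq,rs} h_{pq1} h_{rs1} + (1+\eta) \frac{F^{ii} h_{11i}^2}{h_{11}} \geq \frac{F^{11} h_{111}^2}{h_{11}} - C_\eta \sum_i F^{ii} h_{11i}^2 / h_{11},
\]
(or a variant of this form) to absorb the dangerous term $F^{11} h_{111}^2 / h_{11}^2$, while for $i \in J$ one uses $|h_{11i}| \leq C (|h_{ii1}| + \text{low order})$ from Codazzi plus the ellipticity bound $F^{ii} \geq c F^{11}$ coming from the $(n-2)$-convexity of $\kappa$. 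The main obstacle I expect is precisely the bookkeeping in this splitting: Ren--Wang's inequality for $k = n-2$ is considerably more delicate than for $k = n-1$, and ensuring that all remainder terms produced by commuting derivatives in the warped product geometry (rather than in $\mathbb{R}^{n+1}$) are dominated by the good quadratic term from $N \log \langle X, \nu \rangle$ and the $\alpha \Phi$ correction will require a careful choice of the constants $N$ and $\alpha$ in that order, as well as verifying that the perturbation $B$ does not disturb the concavity inequality up to $O(|B|)$ errors.
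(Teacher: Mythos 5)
Your skeleton is right — test function $\log\kappa_{\max} - N\log u + \alpha\Phi$, Chu's perturbation of the second fundamental form, commutator formula with $\overline R$-corrections, and Ren--Wang's $\sigma_{n-2}$ concavity inequality as the engine — but the description of the key third-order step is off, and in a way that matters.

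For $k=n-2$ (with $n\geq 5$, which is the only range where there is something to prove, since $n=3$ is mean curvature and $n=4$ is the $\sigma_2$ case already handled by Spruck--Xiao and Chen--Li--Wang), the paper does \emph{not} split the index set into ``good'' $\{\kappa_i\geq -\theta\kappa_1\}$ and ``very negative'' $J$ and then run a Lu-style iteration on $J$. That two-set / Lu-inequality argument is what the paper uses in the \emph{semi-convex} and $(k+1)$-convex case (Section 4.2), where no Ren--Wang inequality is available. For $k=n-2$, $n\geq 5$, the condition $2k>n$ together with the elementary Lemma \ref{negative kappa} (if $\kappa_i\leq 0$ then $-\kappa_i<\tfrac{n-k}{k}\kappa_1$) guarantees there simply is no ``very negative'' set: every negative $\kappa_i$ satisfies $-\kappa_i < \tfrac{2}{n-2}\kappa_1 < \kappa_1$, so in \eqref{final inequality} one can choose $\varepsilon\in(\tfrac{n}{2k},1)$ and the sum over the bad index set $I$ drops out entirely (Claim \ref{claim 1}). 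After that, Ren--Wang's inequality (Lemma \ref{Ren-Wang}, applied with $\xi_i=h_{ii1}$ and combined with the Codazzi correction \eqref{Codazzi} via the Hou--Ma--Wu elementary inequality \eqref{HMW}) absorbs $-F^{11}h_{111}^2/\kappa_1^2$ up to a multiple $1-\varepsilon$ of it, which is in turn controlled by the $N$-term after using the first-order critical equation \eqref{1st critical 1}. The order of constant choices is $\varepsilon\to 1^-$, then $\alpha$ large, then $N$ large. Note also that the precise form of Ren--Wang's inequality you quote is not the one in the paper: the paper's Lemma \ref{Ren-Wang} involves the weights $\tfrac{2\kappa_1}{\kappa_1-\kappa_i}$, and it is exactly the sign control from Lemma \ref{negative kappa} (via $2k>n$) that makes those weights useful.

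So the gap is concrete: you are importing the wrong ``hard case'' machinery (Lu's inequality with an iteration over $\kappa_2,\dots,\kappa_k$) into the wrong setting, and you miss the structural simplification that $2k>n$ prevents very negative $\kappa_i$ from ever occurring, which is precisely why the $k=n-2$, $n\geq 5$ case is tractable via Ren--Wang alone. You should also state at the outset that only $n\geq 5$ needs a proof, as that is the regime where the Ren--Wang concavity inequality for $\sigma_{n-2}$ is valid.
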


\begin{remark}
When $n=3$, it reduces to the prescribed mean curvature equation which has been solved long ago; when $n=4$, it becomes the $\sigma_2$ equation in dimension four which has been solved by Guan-Ren-Wang \cite[Theorem 1.4]{Guan-Ren-Wang} in $\bR^{n+1}$, by Spruck-Xiao \cite[Theorem 2.1]{Spruck-Xiao} in space forms and by Chen-Li-Wang \cite[Theorem 4.1]{Chen-Li-Wang} in warped product manifolds.

Therefore, here we only need to prove theorem \ref{theorem n-2} for $n \geq 5$; this is also the applicable range of Ren-Wang's inequality, see lemma \ref{Ren-Wang} below. Since the lower order estimates and the existence theorem (once $C^2$ estimates are obtained) are already established in \cite[Theorem 3.3]{Spruck-Xiao} and \cite[Theorem 1.1]{Chen-Li-Wang} (under some barrier conditions), we do not list them here again.
\end{remark}

Next, we state our second result: by using the test function of Spruck-Xiao in \cite{Spruck-Xiao}
\begin{equation}
Q=\log \kappa_{\max} - \log(\langle X,\nu\rangle - a)+\alpha \Phi, \label{Spruck-Xiao test function}
\end{equation} we can prove $C^2$ estimates for $k$-convex solutions to the prescribed curvature measure type equation i.e.
\begin{equation}
\sigma_k(\kappa(X))=\langle X,\nu\rangle^p \psi(X) \quad \text{for all $X \in \Sigma$}. \label{curvature measure equation}
\end{equation}

When $p=1$, this is exactly the equation for the prescribed curvature measure problem whose significance is discussed in Guan-Li-Li and Yang's papers \cite{Guan-Li-Li, Yang}, in which $C^2$ estimates have been obtained in $\bR^{n+1}$ and $\bH^{n+1}$, respectively. Following Lu, who obtained $C^2$-estimates for the wider range $p \in (-\infty,0) \cup (0,1]$ in $\bH^{n+1}$ \cite[Theorem 1.2]{Lu-CVPDE}, we extend their results not only to warped product manifolds and also to the same wider range of $p$'s.

\begin{theorem} \label{theorem curvature measure}
Let $1 \leq k \leq n$ and $p \in (-\infty,0) \cup (0,1]$. Suppose that $\Sigma$ is a closed strictly star-shaped $k$-convex hypersurface in a warped product manifold $\overline{M}$ satisfying the prescribed curvature measure type equation \eqref{curvature measure equation} for some positive function $\psi(X) \in C^2(\Sigma)$. Assume that the warping function of $\overline{M}$ is positive and has positive derivative. Then there exists some constant $C>0$ depending only on $n,k,p, \norm{M}_{C^1}, \inf \psi$, $\norm{\psi}_{C^2}$ and the curvature $\overline{R}$ of $\overline{M}$ such that
\[\max_{X \in \Sigma} \kappa_{\max}(X) \leq C.\]
\end{theorem}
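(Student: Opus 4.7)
Following Lu \cite{Lu-CVPDE, Lu-PAMS}, I apply the maximum principle to the Spruck-Xiao test function \eqref{Spruck-Xiao test function},
\[Q = \log \kappa_{\max} - \log(u - a) + \alpha \Phi,\]
where $u$ is the warped-product generalization of the support function $\langle X, \nu \rangle$, $a = \tfrac{1}{2}\min_\Sigma u > 0$ thanks to star-shapedness and the $C^1$ bound, $\Phi$ is a smooth function of the radial coordinate chosen as in Chen-Li-Wang \cite{Chen-Li-Wang} so that $\sum_i F^{ii} \Phi_{ii}$ has a positive lower bound proportional to $\sum_i F^{ii}$ (exploiting the positivity of the warping function's derivative), and $\alpha$ is a large constant to be fixed later. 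To handle the possible multiplicity of $\kappa_{\max}$ I first apply Chu's perturbation \cite{Chu}, replacing $\log h_{11}$ by a smooth function that agrees with it at the maximum point and splits the top eigenspace.

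At a maximum $X_0$ of $Q$, fix an orthonormal frame diagonalizing $(h_{ij})$ with $h_{11} = \kappa_{\max}$. The vanishing of $\nabla Q$ yields
\[\frac{h_{11i}}{h_{11}} = \frac{u_i}{u - a} - \alpha \Phi_i,\]
and $\sum_i F^{ii} Q_{ii} \leq 0$ expands into
\[\sum_i F^{ii} \frac{h_{11,ii}}{h_{11}} \leq \sum_i F^{ii} \frac{h_{11i}^2}{h_{11}^2} + \sum_i F^{ii} \big(\log(u - a)\big)_{ii} - \alpha \sum_i F^{ii} \Phi_{ii}.\]
Next I differentiate $\log \sigma_k(\kappa) = p \log u + \log \psi$ twice in the direction $e_1$ and use the Ricci and Codazzi identities in the warped product (whose extra curvature contributions are bounded in terms of $\overline{R}$) to convert $\sum_i F^{ii} h_{ii,11}$ into $\sum_i F^{ii} h_{11,ii}$ modulo terms bounded by $h_{11}$ and the $C^1$ data. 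The key feature of the right-hand side $u^p \psi$ is that its $\nu$-dependence comes only through the scalar $u$, so the extra terms produced by differentiation are of the form $p u^{p-1} u_i$ and $p(p-1) u^{p-2} u_i^2$; under the assumption $p \in (-\infty, 0) \cup (0, 1]$ the quadratic term has the right sign to be either dropped or absorbed, and the linear term is controlled by the $C^1$ bound.

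The main obstacle is the residual negative third-order quantity
\[-F^{11} \frac{h_{111}^2}{h_{11}^2} - 2 \sum_{i \geq 2} \frac{F^{ii} - F^{11}}{\kappa_1 - \kappa_i} \cdot \frac{h_{1i1}^2}{h_{11}^2}\]
coming from the second $e_1$-derivative of $\log \sigma_k$ together with the concavity residue from the Maclaurin expansion of $\sigma_k$. The plan is to absorb these into $\sum_i F^{ii} h_{11i}^2/h_{11}^2$ by means of the Ren-Wang concavity inequality (lemma \ref{Ren-Wang} below), which is available in the $k$-convex cone for all $1 \leq k \leq n$. Once the third-order terms are handled, the first-order relation is substituted to rewrite $\sum_i F^{ii} h_{11i}^2 / h_{11}^2$ as bounded expressions in $u_i/(u-a)$ and $\Phi_i$; these, together with the Hessian of $\log(u - a)$, are then controlled by choosing $\alpha$ large in terms of $\|\psi\|_{C^2}$, $|p|$, $\|\Sigma\|_{C^1}$, and the curvature of $\overline{M}$, and invoking the positive lower bound on $\sum_i F^{ii} \Phi_{ii}$. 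The resulting inequality forces $h_{11}(X_0) \leq C$, which gives the claimed bound on $\kappa_{\max}$.
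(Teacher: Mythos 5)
The central step in your plan is to absorb the residual negative third-order terms via the Ren--Wang concavity inequality, which you claim ``is available in the $k$-convex cone for all $1 \leq k \leq n$.'' This claim is false, and it is precisely the obstruction that makes this theorem nontrivial. The Ren--Wang inequality (Lemma~\ref{Ren-Wang} in the paper) is only proven for $k=n-1$ with $n\geq 3$ and $k=n-2$ with $n\geq 5$; whether it holds for other values of $k$ (even just for $2k>n$) is an open conjecture. For a general $1 \leq k \leq n$ the machinery you propose simply does not exist, so the third-order quantity $-F^{11}h_{111}^2/\kappa_1^2$ cannot be disposed of in one stroke.

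The paper's actual argument for this theorem works around the absence of Ren--Wang's inequality by exploiting two features specific to the right-hand side $\Psi = u^p\psi$, neither of which appears in your sketch. First, the gauge $a>0$ (with $u \geq 2a$) produces the coefficient $\tfrac{u}{u-a}-1 = \tfrac{a}{u-a}>0$ in front of $\sum F^{ii}\kappa_i^2$; this sum is then carefully split (see Claim~\ref{claim 2}) so that pieces of it absorb $-F^{11}h_{111}^2/\kappa_1^2$ and the problematic indices with $\kappa_1 + \tilde\kappa_i<0$, after substituting the critical equation for $h_{11i}$. Second, because Ren--Wang is unavailable, the paper splits into two regimes on $\kappa_2\cdots\kappa_{k-1}$: when this product is large, the elementary concavity of $\sigma_k^{1/k}$ together with a lower bound on $\sum F^{ii}$ suffices; when it is small, the paper runs the iteration argument from Section~\ref{semi-convex section} and invokes Lu's weaker but generic concavity inequality (Lemma~\ref{Lu}), which holds under the eigenvalue-gap hypothesis $\kappa_l\geq\delta\kappa_1$, $\kappa_{l+1}\leq\delta'\kappa_1$ rather than for a restricted range of $k$. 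Finally, the sign of $p(p-1)u^{p-2}u_1^2$ is negative for $p\in(0,1)$; absorbing it is not a matter of ``dropping'' it but requires the explicit cancellation against $(1-\varepsilon)\frac{F}{\kappa_1}\frac{u_1^2}{(u-a)^2}$ carried out at the end of Case~B.2, which is exactly where the hypothesis $p\leq 1$ enters. Your outline compresses all of this into a non-existent lemma.
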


\begin{remark}
We shall mention that in \cite{Chen}, Chen proved theorem \ref{theorem curvature measure} for $k=2$ in $\bR^{n+1}$ based on the study of optimal concavity of the $\sigma_2$ operator; and Huang-Xu \cite{Huang-Xu} proved theorem \ref{theorem curvature measure} for all $2 \leq k \leq n$ in $\bR^{n+1}$ by generalizing the method of Guan-Li-Li \cite{Guan-Li-Li}.
\end{remark}

By extending the lower order estimates in \cite{Guan-Lin-Ma, Huang-Xu, Yang} to warped product manifolds, the existence theorem follows.
\begin{theorem}\label{prescribed curvature measure}
If either (i) $1 \leq k \leq n-1, p \in (-\infty,0)\cup (0,1]$, or (ii) $k=n, p \in (-\infty,0)\cup (0,1)$, then there exists a unique $k$-convex star-shaped $C^{3,\alpha}$ hypersurface $\Sigma$ in $\overline{M}$ satisfying \eqref{curvature measure equation}. 

For $k=n$ and $p=1$, the same existence result would hold with the additional assumption that $\min_{M} \psi>1$.
\end{theorem}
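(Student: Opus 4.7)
The strategy is the continuity method. Writing $\Sigma$ as a radial graph $X(\xi) = (u(\xi),\xi)$ over $\bS^n$ via the warped product structure of $\overline{M}$, the equation \eqref{curvature measure equation} becomes a fully nonlinear elliptic PDE $F[u] = \sigma_k^{1/k}(\kappa[u]) - (\langle X,\nu\rangle^p \psi)^{1/k} = 0$ for $u$ on $\bS^n$ on the $k$-convex cone. I would pick an initial $\psi_0$ for which a geodesic sphere $\{r = r_0\}$ solves the equation (using positivity and monotonicity of the warping function to solve a single algebraic identity for $r_0$), and deform to $\psi$ along the path $\psi_s = (1-s)\psi_0 + s\psi$, $s \in [0,1]$.

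The $C^2$ estimate is precisely theorem \ref{theorem curvature measure}. For the $C^0$ and $C^1$ estimates in warped product manifolds, I would follow the template of Guan-Lin-Ma \cite{Guan-Lin-Ma}, Huang-Xu \cite{Huang-Xu} and Yang \cite{Yang}: evaluating \eqref{curvature measure equation} at a point where $u$ attains its maximum or minimum (where $\nabla u = 0$, so that $\langle X,\nu\rangle$ reduces to a function of $u$ alone) and using the hypothesis $p \in (-\infty,0)\cup (0,1]$, together with the positivity and monotonicity of the warping function $\lambda$, gives upper and lower barriers for $u$. For the gradient bound an auxiliary function of the form $|\nabla u|^2/\phi(u)^2$ on $\bS^n$, combined with the $C^0$ bound and the ellipticity on the $k$-convex cone, would yield a uniform estimate. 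The main obstacle is the endpoint case $k = n, p = 1$: the problem becomes scaling-critical and the standard barrier construction for the $C^0$ upper bound fails without the extra hypothesis $\min_{M}\psi > 1$, which is exactly what is needed to enforce the strict inequality at the critical scale.

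With the full $C^2$ bound in hand, the concavity of $\sigma_k^{1/k}$ on the $k$-convex cone yields a uniform $C^{2,\alpha}$ estimate via the Evans-Krylov theorem, and differentiating the equation once and applying linear Schauder theory upgrades the regularity to $C^{3,\alpha}$. These uniform estimates show that the set $S \subset [0,1]$ of parameters for which a $k$-convex star-shaped solution exists is closed. Openness follows from the implicit function theorem: the linearization of $F$ at a solution is a strictly elliptic operator on $\bS^n$, and its zeroth-order coefficient (computed from the $X$- and $\nu$-dependence of the right-hand side) has the correct sign under our restrictions on $p$ and $\psi$ for the strong maximum principle to imply invertibility. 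Combined with $0 \in S$ by construction, this gives $1 \in S$ and hence existence.

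Finally, uniqueness is proved by the same comparison principle: if $u_1, u_2$ both solve \eqref{curvature measure equation}, then at an interior maximum of $u_1 - u_2$ the difference satisfies a linear elliptic inequality whose zeroth-order coefficient (again controlled by $p \leq 1$, and by $\min_M\psi > 1$ in the endpoint case) has the right sign to force $u_1 \equiv u_2$ via the strong maximum principle.
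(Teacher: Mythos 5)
Your overall strategy matches the paper's: derive $C^0$, $C^1$, $C^2$ a priori estimates, then bootstrap to $C^{2,\alpha}$ via Evans--Krylov and to $C^{3,\alpha}$ via Schauder, and close the argument with a continuity/degree-theoretic scheme following Yang plus a comparison-principle uniqueness step. The paper does exactly this: Section 5 supplies the three estimates for \eqref{curvature measure equation}, and the existence machinery is quoted from \cite{Yang, Guan-Lin-Ma, Huang-Xu}.

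Where you diverge, and where a genuine gap sits, is the $C^1$ estimate. The paper's main new contribution at this stage is that the gradient bound for \eqref{curvature measure equation} holds \emph{without any barrier hypotheses}, precisely because the right-hand side has the structure $\langle X,\nu\rangle^p\psi(X)$ with $p\neq 0$. This is proved with the test function $P=\gamma(\Phi)-\log u$, $\gamma(s)=\alpha p/s$, where $u=\langle V,\nu\rangle$ is the support function: differentiating the equation produces the term $F^{ii}h_{ii1}=pu^{p-1}u_1\psi+u^p\psi_1$, and it is precisely the factor $p$ that supplies the sign needed to absorb the bad terms (this is the content of the three claims in Section 5.2). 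Your proposed auxiliary function $|\nabla u|^2/\phi(u)^2$ does not manifestly exploit this structure; for a generic right-hand side $\Psi(X,\nu)$ that test function would only give a gradient bound under barrier conditions, which then must be propagated along the continuity path — an extra step you have not addressed and which the paper specifically avoids. You should either switch to the support-function test function and carry out the computation that uses $\Psi=u^p\psi$, or explain how the barrier propagation works for your path $\psi_s$. A secondary point: the paper follows Yang's degree-theory framework rather than a straight continuity method; the latter requires invertibility of the linearization at every solution (not just nonzero degree), which is a stronger claim that you assert but do not establish.
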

\begin{remark}
In particular, this generalizes the main result \cite[Theorem 1.1 and Theorem 5.1]{Yang} of Yang from $p=1$ to $p \in (-\infty,0) \cup (0,1]$ in hyperbolic space. But still, our proof is highly inspired by that of Yang.
\end{remark}
\begin{remark}
The extra condition $\min \psi>1$ is required only when deriving the $C^0$ estimate in the case that $k=n$ and $p=1$. On the other hand, our $C^1$ estimate for \eqref{curvature measure equation} holds for all $p \neq 0$ without any barrier conditions; previously, this result was only obtained in $\bR^{n+1}$ \cite[Lemma 2.3]{Huang-Xu} and hence it is new in e.g. $\bH^{n+1}$.
\end{remark}

The ultimate goal is to know whether theorem \ref{theorem 2k>n} holds for $3 \leq k \leq n-3$; see remark 3.5 in \cite{Guan-Li-Li}. This is a rather difficult task and it still remains open up to this date. 

Recall that in their groundbreaking paper, Guan-Ren-Wang proved this for convex solutions and they claimed in a remark \cite[Remark 4.7]{Guan-Ren-Wang} that their proof would also work for semi-convex and ($k+1$)-convex solutions, by modifying their test function \eqref{Guan-Ren-Wang test function}. We provide a simpler proof for this claim using our test function \eqref{my test function} which will make a concavity inequality of Lu in \cite{Lu-CVPDE} applicable and we extend this result to warped product manifolds.

\begin{theorem} \label{weak convexity}
Let $3 \leq k \leq n-3$. Suppose that $\Sigma$ is a closed strictly star-shaped $k$-convex hypersurface in a warped product manifold $\overline{M}$ satisfying the curvature equation \eqref{curvature equation 1} for some positive function $\Psi(X,\nu) \in C^2(\Gamma)$, where $\Gamma$ is an open neighborhood of the unit normal bundle of $\Sigma$ in $\overline{M} \times \bS^n$. 
Assume that the warping function of $\overline{M}$ is positive and has positive derivative.
If either one of the following conditions
\begin{enumerate}[itemsep=5pt]
\item[(a)] $\Sigma$ is semi-convex i.e. there exists some $\eta>0$ such that for all $X \in M$ and all $1 \leq i \leq n$, we have $\kappa_i(X) \geq -\eta$.
\item[(b)] $\Sigma$ is $(k+1)$-convex i.e. $\kappa(X) \in \Gamma_{k+1}$.
\end{enumerate} holds, then there exists some constant $C>0$ depending only on $n,k, \eta,  \norm{M}_{C^1}, \inf \Psi$, $\norm{\Psi}_{C^2}$ and the curvature $\overline{R}$ of $\overline{M}$ such that
\[\max_{X \in \Sigma} \kappa_{\max}(X) \leq C.\]
\end{theorem}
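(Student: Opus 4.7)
The plan is to apply the maximum principle to the test function
\[Q = \log \kappa_{\max} - N \log \langle X,\nu\rangle + \alpha \Phi,\]
where $N,\alpha > 0$ are to be chosen later (here $\Phi$ denotes the warped product potential associated to $\langle X,\nu\rangle$, as in \cite{Chen-Li-Wang}). Suppose $Q$ attains its maximum at $X_0 \in \Sigma$. To handle the possible multiplicity of $\kappa_{\max}$, I would adopt Chu's perturbation trick from \cite{Chu}: choose an orthonormal frame at $X_0$ diagonalizing $h_{ij}$ with $h_{11}(X_0) = \kappa_{\max}(X_0)$, and replace $\kappa_{\max}$ by a smooth, simple perturbed eigenvalue in a neighborhood of $X_0$, so that the differentiations below are legitimate.

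\textbf{Standard manipulations.} Writing $F = \sigma_k^{1/k}$, $F^{ij} = \partial F/\partial h_{ij}$, and using $Q_i(X_0) = 0$ and $F^{ii}Q_{ii}(X_0) \leq 0$, I would commute covariant derivatives via the Codazzi equations in $\overline{M}$, picking up curvature error terms that are bounded in terms of $\overline{R}$ and $\|\Sigma\|_{C^1}$. Following the computations of Spruck-Xiao \cite{Spruck-Xiao} and Chen-Li-Wang \cite{Chen-Li-Wang} adapted to warped products, and using the identities for the support function $\langle X,\nu\rangle$ and for $\Phi$ (whose second covariant derivative contributes a positive multiple of $\sum_i F^{ii}$ thanks to the warping function having positive derivative), one arrives schematically at
\[0 \geq \frac{F^{ii}h_{11,ii}}{h_{11}} - \frac{F^{ii}h_{11,i}^2}{h_{11}^2} + (\text{positive } \alpha\text{-term})\sum_i F^{ii} - C\sum_i F^{ii} - C.\]
After differentiating the equation $F(h) = \Psi^{1/k}$ twice in the $e_1$ direction, substituting into $F^{ii}h_{11,ii}/h_{11}$, and using $Q_i = 0$ to rewrite the $h_{11,1}$-term, the remaining obstacle is a bad negative contribution of the form $-F^{11}h_{11,1}^2/h_{11}^2$ together with the concavity term $-F^{pq,rs}h_{pq,1}h_{rs,1}$.

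\textbf{The main step: Lu's concavity inequality.} This is where the hypothesis $3 \leq k \leq n-3$ together with either semi-convexity or $(k+1)$-convexity is critical. Under assumption (a) or (b), Lu's concavity inequality from \cite{Lu-CVPDE} provides a constant $\delta = \delta(n,k,\eta) > 0$ and yields
\[-\frac{F^{pq,rs}h_{pq,1}h_{rs,1}}{F} + \Bigl(1-\frac{1}{k}\Bigr)\frac{F_1^2}{F^2} \geq \delta \sum_i \frac{F^{ii}h_{11,i}^2}{h_{11}^2} - C\sum_i F^{ii},\]
in particular dominating the bad term $-F^{11}h_{11,1}^2/h_{11}^2$. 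Crucially, it is precisely because our test function uses $\log \kappa_{\max}$ — rather than the smoothed substitute $\log\sum_l e^{\kappa_l}$ employed by Guan-Ren-Wang and Chen-Li-Wang — that Lu's inequality is directly applicable, exactly in the way Lu himself exploited in \cite{Lu-CVPDE, Lu-PAMS}. This is the simplification alluded to in the remarks preceding the theorem.

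\textbf{Conclusion.} Once the bad third-order term is absorbed, choosing $N$ large (to extract a positive multiple of $\sum F^{ii}$ from the $\langle X,\nu\rangle$ contributions in the manner of Spruck-Xiao) and then $\alpha$ large (to dominate all remaining zeroth and first order terms coming from $\Psi$, the ambient curvature $\overline{R}$, and $\|\Sigma\|_{C^1}$) closes the estimate and forces $\kappa_{\max}(X_0) \leq C$. Since $Q$ is maximized at $X_0$, the bound propagates to all of $\Sigma$. The main obstacle throughout is the correct application of Lu's concavity inequality to cancel the $-F^{11}h_{11,1}^2/h_{11}^2$ term; all other steps are essentially routine extensions of \cite{Spruck-Xiao, Chen-Li-Wang} to the present setting.
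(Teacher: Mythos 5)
Your outline gets the framework right — test function $\log\kappa_{\max}-N\log u+\alpha\Phi$, Chu's perturbation for multiplicity, Andrews--Gerhardt and the warped-product commutator formula to reach a final differential inequality — and you correctly identify that the heart of the matter is the bad term $-F^{11}h_{111}^2/\kappa_1^2$. However, the key step is misrepresented. Lu's concavity inequality (Lemma~\ref{Lu}) is not an unconditional bound of the form you wrote: it is a \emph{conditional} estimate that only holds under a pinching hypothesis $\kappa_l\geq\delta\kappa_1$ and $\kappa_{l+1}\leq\delta'\kappa_1$ for some $1\leq l<k$, and it gives a lower bound of the form $(1-\varepsilon)\xi_1^2/\kappa_1^2-\delta_0\sum_{i>l}\sigma_k^{ii}\xi_i^2/(\kappa_1\sigma_k)$, not a term $\delta\sum_iF^{ii}h_{11i}^2/h_{11}^2$. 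Because it is conditional, the proof cannot simply "apply Lu's inequality"; one must run the \emph{iteration/dichotomy argument} (inspired by Guan--Ren--Wang and Yang): either the pinching hypothesis eventually fails and Lu's inequality applies at some index $l<k$, or else $\kappa_i>\delta_i\kappa_1$ for all $i\leq k$, in which case semi-convexity gives $\sigma_k(\kappa)\geq\kappa_1\cdots\kappa_{k-1}(\kappa_k-C\eta)\gtrsim\kappa_1^k$ and the bound follows directly. Your proposal omits this dichotomy entirely, yet without it there is no way to invoke Lu's lemma.

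Beyond this, semi-convexity enters the argument at several places your outline does not address: it is needed to show the index-set sum $\sum_{i\in I}\frac{(2\varepsilon-1)\kappa_1+\tilde\kappa_i}{\kappa_1-\tilde\kappa_i}\frac{F^{ii}h_{11i}^2}{\kappa_1^2}\geq 0$ (Claim~\ref{claim 1}), to control the potentially negative term $\sigma_k(\kappa|1)\,h_{111}^2/\kappa_1^3$ that survives after splitting $\sigma_k=\kappa_1\sigma_{k-1}(\kappa|1)+\sigma_k(\kappa|1)$, and in the secondary dichotomy on whether $C\alpha-CN^2\kappa_k\geq 0$. You also need the precise choice $\varepsilon=1/N^2$ to absorb $\varepsilon F^{11}h_{111}^2/\kappa_1^2$ using the $(N-1)\sum F^{ii}\kappa_i^2$ term. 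Finally, the paper reduces case (b) to case (a) via the Li--Ren--Wang Lemma~\ref{Li-Ren-Wang} ($(k+1)$-convexity $\Rightarrow$ semi-convexity), which is cleaner than running two parallel arguments; if you intend to handle (b) separately you would need to say how.
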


\begin{remark}
Lu \cite{Lu-CVPDE} proved the semi-convex case in $\bH^{n+1}$ based on the method of Yang \cite{Yang} and that of Guan-Ren-Wang \cite{Guan-Ren-Wang}; our proof closely follows theirs, and in fact, it is simply a modest extension of Lu's proof with minor simplifications.

Note also that Lu's proof would work for the ($k+1$)-convex case as well and that was how we proved the $(k+1)$-convex case in our initial draft. Later, Lu pointed out to us that, according to a lemma of Li-Ren-Wang \cite{Li-Ren-Wang}, $(k+1)$-convexity implies semi-convexity. Therefore, it is sufficient to solve the semi-convex case only. We also include a version of the Li-Ren-Wang lemma in the context of our curvature equation; see lemma \ref{Li-Ren-Wang} below.
\end{remark}

The rest of this note is organized as follows. In section \ref{preliminaries}, we list auxiliary facts such as geometric formulas about hypersurfaces in warped product manifolds and properties of the $\sigma_k$ operator. In section \ref{C^2 estimates 1}, we perform a formal computation for a combined test function, which will be quoted in later sections. In section \ref{C^2 estimates 2}, we prove theorem \ref{theorem 2k>n}, theorem \ref{theorem n-2} and theorem \ref{weak convexity}. Finally in section \ref{C^2 estimates 3}, we derive $C^0, C^1$ and $C^2$ estimates for the prescribed curvature measure type equation \eqref{prescribed curvature measure}, which will imply theorem \ref{theorem curvature measure} and theorem \ref{prescribed curvature measure}.

\subsection*{Acknowledgement}
We would like to thank Professor Siyuan Lu for introducing the papers \cite{Ren-Wang-1, Ren-Wang-2, Chen-Li-Wang} of Ren-Wang and Chen-Li-Wang to us. This work is also inspired by Lu's recent two papers \cite{Lu-PAMS, Lu-CVPDE}. We would also like to thank him for pointing out a lemma due to Li-Ren-Wang in \cite{Li-Ren-Wang} i.e. our lemma \ref{Li-Ren-Wang}, which simplifies our proof further.

\textit{Note added on May 8th, 2024.} 

One week after we posted version 1 of this \href{https://arxiv.org/abs/2404.19562}{preprint} on arXiv, we found that Chen-Tu-Xiang posted a \href{https://arxiv.org/abs/2405.03407}{paper} \cite{Chen-Tu-Xiang} in which they proved theorem \ref{theorem 2k>n} independently. However, their test functions and also their arguments are different from ours, though the central ideas are the same i.e. applying the maximum principle to the largest principal curvature. Their proof is more in the style similar to Chu \cite{Chu} (which is very interesting!), while ours follow more closely that of Lu \cite{Lu-PAMS, Lu-CVPDE}. This provides new perspectives in deriving global curvature estimates.

\section{Preliminaries} \label{preliminaries}
In this section, we collect facts and formulas about hypersurfaces in warped product manifolds, which can all be found in \cite{Chen-Li-Wang} or \cite{DBD}; here we are merely listing these information for completeness; also to fix notations.

Let $(M,g')$ be a compact Riemannian manifold and let $I$ be an interval in $\bR$. For a smooth positive function $\phi:I \to \bR$ with $\phi'>0$, we define a warped product manifold $\overline{M}=I \times_{\phi} M$ endowed with the metric
\[\overline{g}=ds^2=dr^2+\phi(r)^2g', \quad r \in I\] which will also be simply denoted by $\langle \cdot,\cdot\rangle$.

\begin{remark}
Note that when $M=\bS^n$,
\[\overline{M}=\begin{cases}
\bR^{n+1}, & \text{if $\phi(r)=r$ and $I=[0,\infty)$}\\
\bH^{n+1}, & \text{if $\phi(r)=\sinh(r)$ and $I=[0,\infty)$}\\
\bS^{n+1}, & \text{if $\phi(r)=\sin(r)$ and $I=[0,\pi/2)$}
\end{cases}.\]
\end{remark}
The Riemannian connection on $\overline{M}$ will be denoted by $\overline{\nabla}$ and the connection on $M$ will be denoted by $\nabla'$. The same convention applies to the curvature tensors $\overline{R}$ and $R'$.

Let $\{e_1,\ldots,e_{n-1}\}$ be an orthonormal frame in $M$ and let $\{\theta_i\}$ be the associated dual frame. An orthonormal frame in $\overline{M}$ can then be defined as $\overline{e}_i:=(1/\phi)e_i$, $1 \leq i \leq n-1$ and $\overline{e}_0:=\partial/\partial r$; the associated dual frame is then $\overline{\theta}_i:=\phi\theta_i$, $1 \leq i \leq n-1$ and $\overline{\theta}_0=dr$.

We may represent a starshaped hypersurface $\Sigma$ as a radial graph of a differentiable function $r: M \to I$ over $M$ i.e. 
\[\Sigma=\{X(z)=(r(z),z): z \in M\},\] whose tangent space is spanned at each point by the vectors
\[X_i=\phi \overline{e}_i+r_i\overline{e}_0,\] where $r_i$'s are the components of the differential $dr=r_i\theta^i$.

The unit outward \footnote{In \cite{Chen-Li-Wang}, Chen-Li-Wang used the inward normal; the reader shall note the sign difference.} normal is given by 
\[\nu=\frac{1}{\sqrt{\phi^2+|\nabla'r|^2}}\left(\phi\overline{e}_0-\sum_{i}r^i\overline{e}_i\right);\] here $|\nabla'r|^2=r^ir_i$ is the squared norm of $\nabla'r=r^ie_i$.

The induced metric on $\Sigma$ is given by
\[g_{ij}=\langle X_i,X_j\rangle=\phi^2\delta_{ij}+r_ir_j \quad \text{with inverse} \quad g^{ij}=\frac{1}{\phi^2}\left(\delta_{ij}-\frac{r^ir^j}{\phi^2+|\nabla'r|^2}\right).\]

The second fundamental form of $\Sigma$ is given by
\[h_{ij}=\langle \overline{\nabla}_{X_j}X_i,\nu\rangle=\frac{1}{\sqrt{\phi^2+|\nabla'r|^2}}(-\phi r_{ij}+2\phi'r_ir_j+\phi^2\phi'\delta_{ij}),\] where $r_{ij}$ are the components of the Hessian $\nabla'^2r=\nabla' dr$ of $r$ in $M$.

\begin{lemma}\label{commutator formula}
Let $X_0$ be a point of $\Sigma$ and let $\{E_0=\nu, E_1,\ldots,E_n\}$ be an adapted frame field such that each $E_i$ is a principal direction and the associated dual frame satisfies $\omega_{i}^{k}=0$ at $X_0$. Then at $X_0$, we have
\[\nabla_{k}h_{ij}=\nabla_{j}h_{ik}+\overline{R}_{0ijk}\] and
\[h_{ii11}-h_{11ii}=h_{11}h_{ii}^2-h_{11}^2h_{ii}+2(h_{ii}-h_{11})\overline{R}_{i1i1}+h_{11}\overline{R}_{i0i0}-h_{ii}\overline{R}_{1010}+\overline{R}_{i1i0;1}-\overline{R}_{1i10;i}.\]
\end{lemma}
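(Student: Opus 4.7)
The plan is to derive both identities from the structural equations of a hypersurface in a Riemannian ambient manifold: the Gauss and Weingarten formulas, the Codazzi equation, the Gauss equation relating intrinsic and extrinsic curvatures, and the Ricci identity for commuting covariant derivatives of tensors on $\Sigma$. The first identity is the classical Codazzi equation, and the second follows by iterating Codazzi and commuting covariant derivatives once more.

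For the Codazzi identity, since $\omega_i^k=0$ at $X_0$ the tangential connection on $\Sigma$ vanishes there, so $\nabla_k h_{ij}$ reduces at $X_0$ to $E_k(h_{ij})$ modulo terms that vanish. Starting from $h_{ij}=\langle\overline{\nabla}_{E_i}E_j,\nu\rangle$ and using the Gauss formula $\overline{\nabla}_{E_i}E_j=\nabla_{E_i}E_j+h_{ij}\nu$ together with the Weingarten formula $\overline{\nabla}_{E_k}\nu=-h_{k\ell}E_\ell$, I would compute $E_k(h_{ij})-E_j(h_{ik})$ directly; the resulting expression collapses to the normal component $\langle\overline{R}(E_k,E_j)E_i,\nu\rangle$, which in the paper's convention equals $\overline{R}_{0ijk}$.

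For the second identity, the strategy is to express both $h_{ii11}$ and $h_{11ii}$ in terms of a common mixed derivative $\nabla^2 h_{1i}$ using Codazzi, commute covariant derivatives on $\Sigma$ via the Ricci identity, and then convert intrinsic curvatures to extrinsic ones via the Gauss equation $R_{abcd}=\overline{R}_{abcd}+h_{ac}h_{bd}-h_{ad}h_{bc}$. Concretely, the Codazzi identity just established gives $\nabla_1 h_{ii}=\nabla_i h_{1i}+\overline{R}_{0ii1}$ and $\nabla_i h_{11}=\nabla_1 h_{1i}+\overline{R}_{011i}$; differentiating once more and subtracting yields
\[ h_{ii11}-h_{11ii}=(\nabla_1\nabla_i-\nabla_i\nabla_1)h_{1i}+\nabla_1\overline{R}_{0ii1}-\nabla_i\overline{R}_{011i}. \]
The Ricci identity turns the commutator into a sum of the form $R_{1i\bullet\bullet}h_{\bullet\bullet}$, which at $X_0$ collapses by the principal-direction hypothesis to diagonal pieces involving $R_{1i1i}h_{ii}$ and $R_{1ii1}h_{11}$; feeding these through the Gauss equation produces the cubic contributions $h_{11}h_{ii}^2-h_{11}^2h_{ii}$ together with the $(h_{ii}-h_{11})\overline{R}_{i1i1}$ contribution. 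The residual terms $h_{11}\overline{R}_{i0i0}-h_{ii}\overline{R}_{1010}$ arise when one carefully unpacks $\nabla_1\overline{R}_{0ii1}$ and $\nabla_i\overline{R}_{011i}$: because $\nu$ and the $E_k$'s vary along $\Sigma$, the Weingarten and Gauss formulas produce additional evaluations of $\overline{R}$ with $\nu$ inserted in different slots, which after applying the symmetries of $\overline{R}_{abcd}$ give exactly those normal–normal terms; the remaining pure "semicolon" derivative of the ambient Riemann tensor delivers $\overline{R}_{i1i0;1}-\overline{R}_{1i10;i}$.

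The main obstacle is purely bookkeeping: one must keep sign conventions straight across the symmetries of $\overline{R}_{abcd}$, the meaning of the normal index $0$, and the Ricci identity acting on a $(0,2)$-tensor; and one must remember that the principal-frame diagonalization of $h_{ij}$ holds only at $X_0$, so the simplified forms of $\nabla h$ and $\nabla^2 h$ apply at that single point. No new conceptual ingredient is required beyond Codazzi, Gauss, Ricci, and Weingarten; the identity is classical (and appears with these conventions in \cite{Chen-Li-Wang, DBD}), and is recorded here mainly to fix notation and signs for the maximum-principle computations later in the paper.
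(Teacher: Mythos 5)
Your approach is correct and is the same classical Codazzi--Ricci--Gauss--Weingarten derivation that the paper relies on; the paper itself does not reprove the lemma, deferring instead to Lemma 2.1 of \cite{DBD}. The only real hazard is the one you already flag, namely keeping the sign conventions for $\overline{R}_{abcd}$, the Ricci commutation rule, the Gauss equation, and the Weingarten map mutually consistent (and noting that $[E_1,E_i]=0$ at $X_0$, so the $\overline{\nabla}_{[\cdot,\cdot]}$ term drops and $\nabla_1\overline{R}_{0ii1}$ may be computed as a bare directional derivative), since a single inconsistency flips the sign of every non-cubic term in the final identity at once.
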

\begin{remark}
The frame field $E_i$ may be obtained from the adapted frame $\nu, X_1,\ldots,X_n$ by the Gram-Schmidt procedure. Since this last frame depends only on $r$ and $\nabla' r$, we may conclude that the components of $\overline{R}$ and $\bar{\nabla} \overline{R}$ calculated in  terms of the frame $E_i$ depend only on $r$ and $\nabla' r$.
\end{remark}
\begin{proof}
See lemma 2.1 in \cite{DBD}.
\end{proof}

$\overline{M}$ naturally comes with a conformal Killing position vector field $V=\phi(r)\partial_r$ and we denote by $\nu(V)$ the outward unit normal. That is, we will study the prescribed curvature equation in the following form:
\begin{equation}
\sigma_{k}(\kappa[\Sigma])=\Psi(V,\nu). \label{curvature equation 2}
\end{equation}

We also define two auxiliary quantities: the support function $u:=\langle V,\nu\rangle$ and 
\[\Phi(r):=\int_{0}^{r} \phi(\rho)\ d\rho.\]

\begin{lemma} \label{geometric formulas}
\begin{align*}
\nabla_{E_i}\Phi&=\phi\langle \overline{e}_0,E_i\rangle E_i\\
\nabla_i u &= g^{kl}h_{ik}\nabla_{E_l}\Phi \\
\nabla_{E_i,E_j}^2 \Phi &= \phi' g_{ij}-uh_{ij}\\
\nabla_{ij}u&=g^{kl}\left(\nabla_{E_k}h_{ij}-\overline{R}_{0ijk}\right)\nabla_{E_l}\Phi+\phi'h_{ij}-ug^{kl}h_{ik}h_{jl}
\end{align*}
\end{lemma}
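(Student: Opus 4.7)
The plan rests on two structural facts about the ambient warped product $\bar M$: first, the position field $V=\phi(r)\bar e_0$ is a conformal Killing vector with
\[
\bar\nabla_X V \;=\; \phi'(r)\,X \qquad \text{for every } X\in T\bar M,
\]
which follows from the warped-product connection formulas $\bar\nabla_{\partial_r}\partial_r=0$ and $\bar\nabla_Y\partial_r=(\phi'/\phi)Y$ for $Y$ tangent to $M$; second, that the gradient of $\Phi$ on $\bar M$ is exactly $V$, since $\Phi'(r)=\phi(r)$ and $\bar\nabla r=\bar e_0$. I will also use Gauss $\bar\nabla_{E_i}E_j=\nabla_{E_i}E_j+h_{ij}\nu$ and Weingarten $\bar\nabla_{E_i}\nu=-g^{kl}h_{ik}E_l$ throughout.

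For the first identity, I simply restrict the ambient gradient: $\nabla_{E_i}\Phi=E_i(\Phi)=\langle\bar\nabla\Phi,E_i\rangle=\phi\,\langle\bar e_0,E_i\rangle$. For the second, I differentiate $u=\langle V,\nu\rangle$ and split with the ambient connection:
\[
\nabla_i u \;=\; \langle \bar\nabla_{E_i}V,\nu\rangle + \langle V,\bar\nabla_{E_i}\nu\rangle.
\]
The first term vanishes because $\bar\nabla_{E_i}V=\phi' E_i$ is tangent, and the second reduces to $-g^{kl}h_{ik}\langle V,E_l\rangle=-g^{kl}h_{ik}\nabla_{E_l}\Phi$ by Weingarten and the first identity (the sign then matching the stated formula up to convention). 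For the third, I compute the intrinsic Hessian:
\[
\nabla^2_{E_i,E_j}\Phi \;=\; E_i\bigl(\langle V,E_j\rangle\bigr)-\langle V,\nabla_{E_i}E_j\rangle
\;=\; \langle\bar\nabla_{E_i}V,E_j\rangle + \langle V,\bar\nabla_{E_i}E_j-\nabla_{E_i}E_j\rangle,
\]
and the two terms give $\phi' g_{ij}$ and $h_{ij}\langle V,\nu\rangle=u\,h_{ij}$ respectively (with the stated sign after fixing Weingarten orientation).

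For the fourth identity, the plan is to covariantly differentiate the second identity once more: $\nabla_{ij}u=-\nabla_{E_j}\bigl(g^{kl}h_{ik}\nabla_{E_l}\Phi\bigr)$. Distributing the derivative produces one term with $\nabla_{E_j}h_{ik}$, which I convert into $\nabla_{E_k}h_{ij}-\bar R_{0ijk}$ via the Codazzi identity stated in Lemma \ref{commutator formula}, and a second term involving $\nabla^2_{E_j,E_l}\Phi$, into which I substitute the Hessian formula just proved. The $\phi' g_{jl}$ piece contracts against $g^{kl}h_{ik}$ to give $\phi' h_{ij}$, and the $u\,h_{jl}$ piece yields the term $u\,g^{kl}h_{ik}h_{jl}$. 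Collecting everything, after one final sign reconciliation, delivers the claimed expression.

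There is no deep obstacle here; these are computational identities, and the only real care is in keeping conventions consistent (sign of $\nu$, sign of $h_{ij}$ in Weingarten, and the precise form of Codazzi). The cleanest way to proceed is to fix a geodesic frame so that $\nabla_{E_i}E_j=0$ at the point, which eliminates the connection coefficients in the Hessian step and makes the Codazzi substitution in step four essentially mechanical.
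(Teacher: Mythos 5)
Since the paper's ``proof'' of this lemma is only a citation to Guan--Li--Wang, De Andrade--Barbosa--De Lira, and Chen--Li--Wang, you are supplying an actual derivation, and the one you give is structurally correct. You correctly isolate the two essential facts --- that $V=\phi\bar e_0$ is the ambient gradient of $\Phi$ (so $E_i(\Phi)=\langle V,E_i\rangle$) and that $V$ is conformal Killing with $\bar\nabla_X V=\phi' X$ --- and from there the second, third, and fourth identities follow from Gauss, Weingarten, and one Codazzi substitution exactly as you describe. Differentiating identity two once more, applying Codazzi from Lemma~\ref{commutator formula} in the form $\nabla_j h_{ik}=\nabla_k h_{ij}-\overline R_{0ijk}$, and then substituting the Hessian formula for $\nabla^2_{jl}\Phi$ does indeed yield identity four with the $\phi' h_{ij}$ and $u\,g^{kl}h_{ik}h_{jl}$ pieces appearing as you indicate.

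The one thing you gesture at but do not actually resolve is the sign bookkeeping, and it is worth being precise because your stated conventions produce the \emph{opposite} sign on every term of identities two through four. With your Gauss $\bar\nabla_{E_i}E_j=\nabla_{E_i}E_j+h_{ij}\nu$ and Weingarten $\bar\nabla_{E_i}\nu=-g^{kl}h_{ik}E_l$, you get $\nabla_i u=-g^{kl}h_{ik}\nabla_{E_l}\Phi$ and $\nabla^2_{ij}\Phi=\phi' g_{ij}+u\,h_{ij}$, whence every sign in the fourth identity flips too. The resolution is not cosmetic: the lemma is stated under the effective convention $h_{ij}=-\langle\bar\nabla_{X_j}X_i,\nu\rangle$ with $\nu$ the outward normal (equivalently, Gauss reads $\bar\nabla_{E_i}E_j=\nabla_{E_i}E_j-h_{ij}\nu$ and Weingarten reads $\bar\nabla_{E_i}\nu=+g^{kl}h_{ik}E_l$). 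Indeed the paper's own coordinate expression for $h_{ij}$ assigns a sphere positive principal curvature, which is incompatible with the displayed definition $h_{ij}=\langle\bar\nabla_{X_j}X_i,\nu\rangle$ and outward $\nu$; that displayed formula carries an unwritten minus sign. If you adopt the corrected Gauss--Weingarten pair at the outset, your computation lands on the stated lemma with no further ``reconciliation'' needed. As a small aside, the first identity as printed in the paper has a stray $E_i$ on the right; $\nabla_{E_i}\Phi$ is of course the scalar $\phi\langle\bar e_0,E_i\rangle$, which is how you correctly treat it.
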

\begin{proof}
See proposition 2.4 in \cite{Guan-Li-Wang}, lemma 4.1 in \cite{DBD} and lemma 2.3 in \cite{Chen-Li-Wang}.
\end{proof}

Before we end this section, we list a few useful properties of the $\sigma_k$ operator. The reader is also referred to \cite{Spruck-MSRI, Wang} and \cite[Chapter XV]{Lieberman} for more.

The $k$-th elementary symmetric polynomial $\sigma_k : \bR^n \to \bR$ is a smooth symmetric function of $n$ variables, defined by 
\[\sigma_{k}(\kappa)=\sum_{i_1<i_2<\cdots<i_k} \kappa_{i_1}\cdots\kappa_{i_k}\] and the \Garding\ cones are defined by 
\[\Gamma_{k}=\{\kappa \in \bR^n: \sigma_{j}(\kappa)>0 \quad \forall\ 1 \leq j \leq k\}.\]

\begin{definition} \label{k-convex}
An embedded hypersurface $\Sigma$ in $\overline{M}$ is said to be a \textbf{$k$-convex} or \textbf{$k$-admissible} solution to the curvature equation
\[\sigma_k(\kappa[\Sigma])=\Psi(V,\nu)\]
if its principal curvatures $\kappa=(\kappa_1,\ldots,\kappa_n)$ belong to the $k$th \Garding 's cone $\Gamma_k$ at every point.
\end{definition}

\begin{notation}
Observe that 
\[\frac{\partial \sigma_k}{\partial \kappa_i} = \sigma_{k-1}(\kappa_1,\ldots,\kappa_{i-1},0,\kappa_i,\ldots,\kappa_n)=\sigma_{k-1}(\kappa)\bigg|_{\kappa_i=0}\] and we introduce the notation
\[\sigma_{k}^{ii}, \quad \sigma_{k-1}(\kappa|i), \quad \text{or} \quad \sigma_{k-1;i}(\kappa)\] to mean the same thing.

Similarly, for the second order derivatives, we use the following notations
\[\sigma_{k}^{ii,jj}=\sigma_{k-2}(\kappa|ij)=\sigma_{k-2;ij}(\kappa)=\frac{\partial^2\sigma_k}{\partial \kappa_i \partial \kappa_j}.\]
\end{notation}
\begin{lemma} \label{sigma_k formulas}
For $\kappa \in \bR^n$, we have the following
\begin{align*}
\sigma_k(\kappa)=\kappa_i\sigma_{k-1}(\kappa|i)+\sigma_k(\kappa|i), \quad 
\sum_{i=1}^{n} \kappa_i \sigma_{k-1}(\kappa|i)=k\sigma_k, \quad
\sum_{i=1}^{n} \sigma_{k-1}(\kappa|i)=(n-k+1)\sigma_{k-1}
\end{align*}
\end{lemma}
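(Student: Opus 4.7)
The plan is to establish the three identities by a combination of direct combinatorial decomposition, Euler's homogeneity relation, and a counting argument. These are standard facts about elementary symmetric polynomials, so the proof amounts to book-keeping rather than any deep idea; I expect no genuine obstacle.

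First I would prove the decomposition identity $\sigma_k(\kappa)=\kappa_i\sigma_{k-1}(\kappa|i)+\sigma_k(\kappa|i)$ by simply splitting the sum
\[
\sigma_k(\kappa)=\sum_{i_1<\cdots<i_k}\kappa_{i_1}\cdots\kappa_{i_k}
\]
into those monomials that contain the factor $\kappa_i$ (whose sum is $\kappa_i$ times the sum over $(k-1)$-subsets of $\{1,\dots,n\}\setminus\{i\}$, i.e.\ $\kappa_i\sigma_{k-1}(\kappa|i)$) and those that do not (whose sum is by definition $\sigma_k(\kappa|i)$).

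Next, for the second identity I would invoke Euler's relation. Since $\sigma_k$ is homogeneous of degree $k$ in $\kappa$, differentiating $\sigma_k(\lambda\kappa)=\lambda^k\sigma_k(\kappa)$ in $\lambda$ at $\lambda=1$ yields
\[
\sum_{i=1}^n \kappa_i\,\frac{\partial \sigma_k}{\partial \kappa_i}(\kappa)=k\,\sigma_k(\kappa),
\]
and the formula already noted in the excerpt identifies $\partial\sigma_k/\partial\kappa_i=\sigma_{k-1}(\kappa|i)$. Alternatively one can sum the first identity over $i$, use that each $k$-subset is counted once with the factor $\kappa_i\sigma_{k-1}(\kappa|i)$ for each $i$ in the subset, and conclude.

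Finally, for $\sum_i \sigma_{k-1}(\kappa|i)=(n-k+1)\sigma_{k-1}$, I would argue by counting: each monomial $\kappa_{j_1}\cdots\kappa_{j_{k-1}}$ appearing in $\sigma_{k-1}$ corresponds to a $(k-1)$-subset $S\subset\{1,\dots,n\}$, and this monomial contributes to $\sigma_{k-1}(\kappa|i)$ precisely when $i\notin S$. Since there are exactly $n-(k-1)=n-k+1$ such indices $i$, summing over $i$ multiplies each monomial of $\sigma_{k-1}$ by $n-k+1$, giving the claim. No analytic step is required; the entire lemma is secured once these three short combinatorial/algebraic observations are assembled.
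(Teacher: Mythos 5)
Your proof is correct. The paper states this lemma without proof, citing standard references (Spruck's lecture notes, Wang's survey, and Lieberman's book) for these elementary facts about $\sigma_k$; your combinatorial decomposition, Euler's homogeneity relation, and the counting argument are exactly the standard derivations one finds there, so you have simply supplied the routine verification the paper omits.
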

\begin{lemma}\label{sigma_l}
For $\kappa \in \Gamma_{k}$ and $1 \leq l < k$, we have 
\[\sigma_{l}(\kappa)>\kappa_1\cdots \kappa_l\] and 
\[\sigma_{k}(\kappa) \leq C\kappa_1 \cdots \kappa_k.\]
\end{lemma}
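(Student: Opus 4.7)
Both statements are elementary facts about symmetric functions on the \Garding\ cone; the proofs are essentially combinatorial, and I would appeal to the standard cone theory (see e.g.\ Spruck's MSRI notes or \cite{Wang}) for background and supply short arguments for each part. Throughout I would fix the ordering $\kappa_1 \geq \kappa_2 \geq \cdots \geq \kappa_n$, under which $\kappa \in \Gamma_k$ forces $\kappa_1, \ldots, \kappa_k > 0$, so that the products $\kappa_1 \cdots \kappa_l$ in the statement are positive for $l \leq k$; I would also use the quantitative bound $|\kappa_j| \leq C(n,k)\kappa_1$ for every $j$, which is the only auxiliary fact I need beyond the recursions recorded in Lemma \ref{sigma_k formulas}.

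For the upper bound $\sigma_k(\kappa) \leq C \kappa_1 \cdots \kappa_k$, I would expand $\sigma_k = \sum_{|I|=k} \prod_{i \in I}\kappa_i$ and isolate the distinguished monomial $\kappa_1 \cdots \kappa_k$. Every other $k$-subset contains at least one index $j > k$, and combining $|\kappa_j| \leq C(n,k)\kappa_1$ with $0 < \kappa_i \leq \kappa_1$ for $i \leq k$ controls each of the $\binom{n}{k}-1$ remaining monomials in absolute value by $C(n,k)\kappa_1 \cdots \kappa_k$. Summing over the at most $\binom{n}{k}$ subsets gives the desired constant.

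For the strict lower bound $\sigma_l(\kappa) > \kappa_1 \cdots \kappa_l$ with $l < k$, I would iterate the Pascal-type recursion $\sigma_l(\kappa_1,\ldots,\kappa_m) = \sigma_l(\kappa_1,\ldots,\kappa_{m-1}) + \kappa_m \sigma_{l-1}(\kappa_1,\ldots,\kappa_{m-1})$ from $m = n$ down to $m = l+1$, which telescopes into the clean algebraic identity
\[\sigma_l(\kappa) - \kappa_1 \cdots \kappa_l \;=\; \sum_{j=l+1}^{n} \kappa_j \, \sigma_{l-1}(\kappa_1, \ldots, \kappa_{j-1}),\]
after which I would verify that the right-hand side is strictly positive. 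For $l+1 \leq j \leq k$ both factors are positive and contribute unconditionally; for $j > k$, the factor $\kappa_j$ may be negative, but the truncated vector still carries enough \Garding\ information (accessed by an induction on the vector length) to force $\sigma_{l-1}(\kappa_1,\ldots,\kappa_{j-1}) > 0$, and the negative contributions are absorbed by the dominant positive ones via the uniform bound $|\kappa_j| \leq C \kappa_1$.

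The main obstacle I anticipate is making this absorption produce a strict positivity in the borderline case $l = k-1$, where the positive cushion from indices $\leq k$ is thin and one has to rule out cancellation carefully. As a backup route I would combine the Newton--Maclaurin inequality $(\sigma_l/\binom{n}{l})^{1/l} > (\sigma_k/\binom{n}{k})^{1/k} > 0$ valid on $\Gamma_k$ with the upper bound $\sigma_k \leq C \kappa_1 \cdots \kappa_k$ proved in the previous paragraph, yielding the strict lower bound in one step and bypassing the delicate case analysis entirely.
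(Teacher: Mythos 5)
The paper itself does not prove this lemma; it cites Lemma~12 of Ren--Wang and Lemma~A.1 of Li (1991). Your attempt at a self-contained proof is a reasonable ambition, but both parts as written have genuine gaps.

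For the upper bound $\sigma_k(\kappa) \le C\kappa_1\cdots\kappa_k$: the bound $|\kappa_j| \le C(n,k)\kappa_1$ is not strong enough. Applying it to a monomial like $\kappa_1\cdots\kappa_{k-1}\kappa_n$ only gives $|\kappa_1\cdots\kappa_{k-1}\kappa_n| \le C\kappa_1^k$, and $\kappa_1^k$ can be vastly larger than $\kappa_1\cdots\kappa_k$ when $\kappa_k \ll \kappa_1$ (e.g.\ $n=3$, $k=2$, $\kappa=(M,\epsilon,-\epsilon/2)$, where your bound gives $CM^2$ against a target of $M\epsilon$). What you actually need, and what is true, is $|\kappa_j|\le C(n,k)\kappa_k$ for every $j>k$: this follows from the $\Gamma_k$ fact that $\kappa_k + \kappa_{k+1} + \cdots + \kappa_n > 0$, so $|\kappa_n|< (n-k)\kappa_k$, and hence $|\kappa_j| \le \max(\kappa_{k+1},|\kappa_n|) \le (n-k)\kappa_k$. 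With that replacement, the monomial-by-monomial estimate you describe does close.

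For the lower bound $\sigma_l(\kappa) > \kappa_1\cdots\kappa_l$: your telescoping identity is correct, but you openly leave the absorption of the negative terms (from indices $j>k$) unverified, and your proposed backup route does not rescue it. Newton--Maclaurin gives $\sigma_l \ge c\,\sigma_k^{l/k}$, which is a lower bound on $\sigma_l$ in terms of $\sigma_k$; combining it with the \emph{upper} bound $\sigma_k \le C\kappa_1\cdots\kappa_k$ moves in the wrong direction and gives nothing of the form $\sigma_l \gtrsim \kappa_1\cdots\kappa_l$. Worse, no complementary lower bound $\sigma_k \ge c\,\kappa_1\cdots\kappa_k$ exists on $\Gamma_k$: for instance with $n=4$, $k=2$, $\kappa=(M,M,-a,-a)$, one can take $a \uparrow M(2-\sqrt{3})$ so that $\sigma_2 \to 0$ while $\kappa_1\kappa_2 = M^2$ stays fixed. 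So the Newton--Maclaurin route is a dead end, and the telescoping-identity route still needs the absorption argument to be supplied. As the paper does, it is probably cleanest simply to cite the explicit proof in the arXiv version of Ren--Wang's $\sigma_{n-2}$ paper rather than reconstruct it.
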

\begin{proof}
For the first inequality, see lemma 12 in \cite{Ren-Wang-2}; an explicit proof is given in the preprint version of that paper. For the second inequality, see lemma A.1 in \cite{Li-1991}.
\end{proof}
\begin{lemma}\label{negative kappa}
For $\kappa=(\kappa_1,\ldots,\kappa_n) \in \Gamma_k$, if $\kappa_i \leq 0$, then 
\[-\kappa_i<\frac{n-k}{k}\kappa_1.\]
\end{lemma}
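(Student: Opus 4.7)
The plan is to reduce the inequality to a ratio estimate for $\sigma_k/\sigma_{k-1}$ on the truncated vector obtained by deleting the most negative coordinate. Since both $\sigma_j$ and $\Gamma_k$ are symmetric in the entries, reorder so that $\kappa_1 \geq \kappa_2 \geq \cdots \geq \kappa_n$; then for any index $i$ with $\kappa_i \leq 0$ we have $-\kappa_i \leq -\kappa_n$, so it suffices to prove
\[-\kappa_n < \frac{n-k}{k}\kappa_1\]
under the assumption $\kappa_n \leq 0$.

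The first step is to show that the truncated vector $\mu := (\kappa_1,\ldots,\kappa_{n-1}) \in \bR^{n-1}$ still lies in $\Gamma_k$ viewed in dimension $n-1$. Using the recursion $\sigma_l(\kappa) = \sigma_l(\mu) + \kappa_n\sigma_{l-1}(\mu)$ from Lemma \ref{sigma_k formulas} together with $\kappa_n \leq 0$, an induction on $l = 1,\ldots,k$ yields
\[\sigma_l(\mu) = \sigma_l(\kappa) + (-\kappa_n)\sigma_{l-1}(\mu) \geq \sigma_l(\kappa) > 0,\]
so indeed $\mu \in \Gamma_k \subset \bR^{n-1}$. Because of this, each partial derivative $\sigma_{k-1}(\mu|i)$ is strictly positive for $1 \leq i \leq n-1$. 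The condition $\sigma_k(\kappa) > 0$ then rewrites as $-\kappa_n < \sigma_k(\mu)/\sigma_{k-1}(\mu)$. To bound this ratio I would apply the two identities of Lemma \ref{sigma_k formulas} in dimension $n-1$:
\[k\sigma_k(\mu) = \sum_{i=1}^{n-1}\mu_i\sigma_{k-1}(\mu|i), \qquad \sum_{i=1}^{n-1}\sigma_{k-1}(\mu|i) = (n-k)\sigma_{k-1}(\mu).\]
Since $\mu_i \leq \mu_1 = \kappa_1$ and $\sigma_{k-1}(\mu|i)>0$ for every $i$, the first identity is bounded above by $\kappa_1$ times the second, giving $k\sigma_k(\mu) \leq (n-k)\kappa_1\sigma_{k-1}(\mu)$ and hence the desired ratio bound.

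The only substantive point is the verification that $\mu$ still belongs to $\Gamma_k$ after the truncation; this is the step that uses crucially that the removed coordinate is nonpositive. Once this is in place the proof is a mechanical assembly of the symmetric-function identities and the positivity of $\sigma_{k-1}(\mu|i)$ in $\Gamma_k$, and the equality case $\kappa_1 = \cdots = \kappa_{n-1} = 1$, $\kappa_n \to -\frac{n-k}{k}$ shows the constant $\frac{n-k}{k}$ is sharp, which is a useful sanity check along the way.
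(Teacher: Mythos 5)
Your proof is correct, and it is self-contained, whereas the paper simply cites Lemma 11 of Ren--Wang \cite{Ren-Wang-2} without reproducing the argument. The key steps — truncating to $\mu = (\kappa_1,\ldots,\kappa_{n-1})$, verifying $\mu \in \Gamma_k$ in dimension $n-1$ by induction using $\sigma_l(\mu) = \sigma_l(\kappa) - \kappa_n\sigma_{l-1}(\mu)$ with $\kappa_n \leq 0$, rewriting $\sigma_k(\kappa) > 0$ as $-\kappa_n < \sigma_k(\mu)/\sigma_{k-1}(\mu)$, and then bounding that ratio via the identities $k\sigma_k(\mu) = \sum_i \mu_i\sigma_{k-1}(\mu|i)$ and $\sum_i\sigma_{k-1}(\mu|i) = (n-k)\sigma_{k-1}(\mu)$ in dimension $n-1$ — all check out, and the sharpness check with $\mu = (1,\ldots,1)$ confirms the constant $(n-k)/k$. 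Two minor points worth making explicit in a final write-up: first, one should note $k \leq n-1$ (for $k=n$ the statement is vacuous since $\Gamma_n$ contains no nonpositive entries, and $\Gamma_n$ makes no sense in dimension $n-1$); second, the positivity of $\sigma_{k-1}(\mu|i)$ for $\mu \in \Gamma_k$ is a standard G\r{a}rding-cone fact not recorded in the paper's Lemma \ref{sigma_k formulas}, so it should be stated or cited rather than treated as already available.
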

\begin{proof}
See lemma 11 in \cite{Ren-Wang-2}.
\end{proof}
\begin{lemma} \label{kappa squared times f_i}
Let $1 \leq k \leq n$.
If $\kappa = (\kappa_1,\ldots,\kappa_n) \in \Gamma_{k}$ is ordered as $\kappa_1 \geq \kappa_2 \geq \cdots \geq \kappa_n$, then we have 
\[\sum_{i=1}^{n} \kappa_{i}^2\sigma_{k}^{ii} \geq \frac{k}{n}\kappa_1\sigma_k.\]
\end{lemma}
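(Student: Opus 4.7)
The plan is to express $\sum_i \kappa_i^2 \sigma_k^{ii}$ as a polynomial combination of elementary symmetric functions of $\kappa$, apply the Newton–Maclaurin inequality to control $(k+1)\sigma_{k+1}$ in terms of $\sigma_1\sigma_k$, and finally reduce $\sigma_1$ to $\kappa_1$. I will use only the splitting identity from lemma~\ref{sigma_k formulas} and Newton's classical inequality, plus the fact that in $\Gamma_k$ every $\sigma_j$ with $1\le j\le k$ is strictly positive.

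First, starting from $\sigma_k=\sigma_k(\kappa|i)+\kappa_i\sigma_{k-1}(\kappa|i)$, I multiply by $\kappa_i$ and sum over $i$. The double-counting identity $\sum_i \kappa_i\sigma_k(\kappa|i)=(k+1)\sigma_{k+1}$ (each size-$(k+1)$ subset $T$ is hit once for every $i\in T$) then yields the algebraic identity
\[
\sum_{i=1}^{n}\kappa_i^2\sigma_k^{ii}=\sigma_1\sigma_k-(k+1)\sigma_{k+1}.
\]

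Next, I invoke Newton's inequality $\sigma_j^2\ge \frac{(j+1)(n-j+1)}{j(n-j)}\sigma_{j-1}\sigma_{j+1}$, which is valid for arbitrary real $\kappa$. Since $\kappa\in\Gamma_k$ guarantees $\sigma_j>0$ for $1\le j\le k$, I may divide and telescope the bound $\sigma_{j+1}/\sigma_j\le \frac{j(n-j)}{(j+1)(n-j+1)}\sigma_j/\sigma_{j-1}$ from $j=1$ up to $j=k$ to obtain
\[
(k+1)\sigma_{k+1}\le \frac{n-k}{n}\sigma_1\sigma_k.
\]
Substituting this into the identity above gives $\sum_i\kappa_i^2\sigma_k^{ii}\ge \frac{k}{n}\sigma_1\sigma_k$.

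It remains to replace $\sigma_1$ by $\kappa_1$. When $k\ge 2$, the standard fact that $\kappa\in\Gamma_k$ implies $(\kappa_2,\ldots,\kappa_n)\in\Gamma_{k-1}^{\,n-1}\subset\Gamma_1^{\,n-1}$ forces $\kappa_2+\cdots+\kappa_n>0$, so $\sigma_1>\kappa_1$ and the conclusion follows. The case $k=1$ must be handled separately because there $\sigma_1\ge\kappa_1$ can fail; but in that case $\sigma_k^{ii}\equiv 1$ and the inequality reduces to $\sum_i\kappa_i^2\ge\kappa_1^2\ge \frac{\kappa_1\sigma_1}{n}$, which is immediate from $\sigma_1\le n\kappa_1$ since $\kappa_1$ is the maximum. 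The main technical point I anticipate is keeping track of strict positivity throughout the iterated Newton step; otherwise everything is routine algebra.
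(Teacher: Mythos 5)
Your proof is correct, and it is more self-contained than the paper's. The paper handles $k=1$ directly (exactly as you do, via $\kappa_1\ge\sigma_1/n$), and for $2\le k\le n$ it cites Lu's lemma 2.3 from his PAMS paper as a black box for the intermediate inequality $\sum_i\kappa_i^2\sigma_k^{ii}\ge\frac{k}{n}\sigma_1\sigma_k$, then finishes with the same observation $\sigma_1>\kappa_1$ coming from $\sigma_2^{11}(\kappa)=\sigma_1-\kappa_1>0$ on $\Gamma_k\subseteq\Gamma_2$. You instead derive that intermediate inequality from first principles: the identity
\[
\sum_{i=1}^n\kappa_i^2\sigma_k^{ii}=\sigma_1\sigma_k-(k+1)\sigma_{k+1},
\]
obtained by multiplying the splitting formula $\sigma_k=\kappa_i\sigma_{k-1}(\kappa|i)+\sigma_k(\kappa|i)$ by $\kappa_i$, summing, and using the double-count $\sum_i\kappa_i\sigma_k(\kappa|i)=(k+1)\sigma_{k+1}$; then the telescoped Newton--Maclaurin bound $(k+1)\sigma_{k+1}\le\frac{n-k}{n}\sigma_1\sigma_k$. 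The telescoping is legitimate because $\sigma_0,\sigma_1,\dots,\sigma_k>0$ on $\Gamma_k$, so each ratio $\sigma_j/\sigma_{j-1}$ with $1\le j\le k$ is positive and the chain closes at $\sigma_1/\sigma_0=\sigma_1$; the edge case $k=n$ is harmless since both sides vanish. Your route buys a fully elementary, citation-free argument (essentially reproducing what Lu's lemma encapsulates), while the paper's buys brevity; the two proofs finish identically, and your separate treatment of $k=1$, where $\sigma_1>\kappa_1$ may fail but $\kappa_1\ge\sigma_1/n$ suffices, matches the paper's.
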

\begin{proof}
When $k=1$, the inequality follows by noting that $\sigma_{1}^{ii}=1$ and $\kappa_1 \geq \frac{1}{n} \sigma_1$. When $2 \leq k \leq n$, by lemma 2.3 in \cite{Lu-PAMS}, we have $\sum \kappa_{i}^2 \sigma_{k}^{ii} \geq \frac{k}{n}\sigma_1\sigma_k$. Noting that $\sigma_1-\kappa_1=\frac{\partial \sigma_2}{\partial \kappa_i}>0$ for $\kappa \in \Gamma_k \subseteq \Gamma_2$ yields the inequality.
\end{proof}

Finally, we have the concavity inequalities due to Ren-Wang and Lu.

\begin{lemma} \label{Ren-Wang}
Let $k=n-1, n \geq 3$ or $k=n-2, n\geq 5$, and let $\xi \in \bR^n$ be an arbitrary vector. Suppose that $\kappa \in \Gamma_{k}$ is ordered as $$\kappa_1 >\kappa_2 \geq \kappa_3 \geq \cdots \geq \kappa_n.$$ Suppose also that $N_0 \leq \sigma_{k}(\kappa) \leq N_1$. If $\kappa_1$ is sufficiently large, then there exists some $\beta>0$ such that 
\[\kappa_1\left[\beta\left(\sum_{i=1}^{n} \sigma_{k}^{ii}\xi_{i}\right)^{2}-\sum_{p \neq q} \sigma_{k}^{pp,qq}\xi_p\xi_q\right]-\sigma_{k}^{11}\xi_{1}^2+\sum_{i\neq 1} \frac{2\kappa_1}{\kappa_1-\kappa_i}\sigma_{k}^{ii}\xi_{i}^2 \geq 0.\]
\end{lemma}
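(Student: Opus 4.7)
The plan is to treat the desired inequality as a positive-semidefiniteness statement for a quadratic form in $\xi\in\bR^n$, exploiting the ``near-top-rank'' algebraic structure of $\sigma_k$ when $k\in\{n-1,n-2\}$. First I would use the identities $\sigma_k^{ii,ii}=0$ and $\sigma_k^{pp,qq}=\sigma_{k-2}(\kappa|pq)$ for $p\neq q$ to split the cross-term sum as
\[
-\sum_{p\neq q}\sigma_k^{pp,qq}\xi_p\xi_q \;=\; -2\sum_{q\neq 1}\sigma_k^{11,qq}\xi_1\xi_q \;-\;\sum_{\substack{p\neq q\\ p,q\neq 1}}\sigma_k^{pp,qq}\xi_p\xi_q,
\]
isolating the ``index-$1$'' cross terms from the reduced quadratic form in $(\xi_2,\ldots,\xi_n)$.

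The heart of the argument is bounding the reduced form $Q'(\xi):=-\sum_{p\neq q,\,p,q\neq 1}\sigma_k^{pp,qq}\xi_p\xi_q$ from below by
\[
Q'(\xi) \;\geq\; -\frac{C}{\kappa_1}\sum_{i\neq 1}\sigma_k^{ii}\xi_i^2,
\]
so that after multiplication by $\kappa_1$ this negative contribution is absorbed into the ``good'' diagonal term $\sum_{i\neq 1}\frac{2\kappa_1}{\kappa_1-\kappa_i}\sigma_k^{ii}\xi_i^2$ once $\kappa_1$ is large. Viewing $\sigma_k$ as a polynomial in $(\kappa_2,\ldots,\kappa_n)$ with $\kappa_1$ fixed, it is an explicit linear combination of $\sigma_{k-1}$ and $\sigma_{k-2}$ in $n-1$ variables, and only for $k\in\{n-1,n-2\}$ do the associated Newton--Maclaurin concavity estimates produce the required $1/\kappa_1$ gain. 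The dimension assumption $n\geq 5$ in the $k=n-2$ case enters exactly at this combinatorial step, and the absolute values of the possibly-negative components $\kappa_{n-1},\kappa_n$ are absorbed into $\kappa_1$ via Lemma \ref{negative kappa}.

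The cross terms $-2\sigma_k^{11,qq}\xi_1\xi_q$ are then controlled by a weighted Cauchy--Schwarz: for each $q\neq 1$,
\[
2\kappa_1|\sigma_k^{11,qq}||\xi_1||\xi_q|\;\leq\; \frac{\kappa_1}{\kappa_1-\kappa_q}\sigma_k^{qq}\xi_q^2 \;+\; \frac{\kappa_1(\kappa_1-\kappa_q)(\sigma_k^{11,qq})^2}{\sigma_k^{qq}}\xi_1^2,
\]
so the $\xi_q^2$ piece is absorbed into the good diagonal term. The identity $\sigma_k^{11}=\kappa_q\sigma_k^{11,qq}+\sigma_{k-1}(\kappa|1q)$ lets one bound $(\sigma_k^{11,qq})^2/\sigma_k^{qq}$ by a multiple of $\sigma_k^{11}/\kappa_1$, and together with $\kappa_1\sigma_k^{11}\leq k\sigma_k\leq kN_1$ (from $\sum_i\kappa_i\sigma_k^{ii}=k\sigma_k$), the residual $\xi_1^2$ coefficient becomes $O(1)$, which is then dominated by the square term $\beta\kappa_1(\sum_i\sigma_k^{ii}\xi_i)^2\geq\beta\kappa_1(\sigma_k^{11})^2\xi_1^2 - (\text{absorbable mixed terms})$ provided $\beta$ is chosen appropriately and $\kappa_1$ is sufficiently large so that $\beta\kappa_1\sigma_k^{11}\geq 1$.

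The main obstacle is the second step: extracting the decisive factor $1/\kappa_1$ from the reduced form $Q'$. This is the combinatorial heart of the Ren--Wang argument and is exactly what fails for general $k\leq n-3$. Carrying it through for $k=n-2$ requires a delicate case analysis of the signs of $\sigma_{k-2}(\kappa|pq)$ and $\sigma_{k-3}(\kappa|pq)$ when two of the $\kappa_i$'s may be negative, and uses $\sigma_k\geq N_0>0$ together with Lemma \ref{negative kappa} to absorb such contributions; the full combinatorial computation is the content of \cite{Ren-Wang-1,Ren-Wang-2}, and in the present note the proof would essentially reduce to quoting their result after the setup above.
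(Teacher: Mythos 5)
Your proposal and the paper's proof are genuinely different, but yours has a real gap and doesn't quite close.

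The paper's argument is much shorter than your outline suggests: it directly quotes the explicit forms of the Ren--Wang concavity theorems (Theorem~11 of \cite{Ren-Wang-1} for $k=n-1$, Theorem~4 of \cite{Ren-Wang-2} for $k=n-2$), which state
\[
\kappa_1\Bigl[\beta\Bigl(\sum_i\sigma_k^{ii}\xi_i\Bigr)^2-\sum_{p\neq q}\sigma_k^{pp,qq}\xi_p\xi_q\Bigr]-\sigma_k^{11}\xi_1^2+\sum_{i\neq 1}a_i\xi_i^2\geq 0
\]
with $a_i=\sigma_k^{ii}+(\kappa_1+\kappa_i)\sigma_k^{11,ii}$ (for $k=n-2$) or $a_i=(1+\varepsilon)\sigma_k^{ii}$ (for $k=n-1$). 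Then the proof of the lemma is a one-line monotonicity observation: using $\sigma_k^{11,ii}=(\sigma_k^{ii}-\sigma_k^{11})/(\kappa_1-\kappa_i)$ and Lemma~\ref{negative kappa} (so that $\kappa_1+\kappa_i\geq 0$ when $2k>n$) one shows $a_i\leq\frac{2\kappa_1}{\kappa_1-\kappa_i}\sigma_k^{ii}$, hence the stated inequality with the larger coefficient follows a fortiori. You never mention this reduction, and you do not identify the specific Ren--Wang statements to quote; instead you sketch a from-scratch re-derivation whose hard step you yourself defer to Ren--Wang, which is circular as written.

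Beyond that, your bookkeeping on the $\xi_1^2$-coefficient appears to break. You propose to absorb a residual $O(1)\xi_1^2$ term into $\beta\kappa_1\bigl(\sum_i\sigma_k^{ii}\xi_i\bigr)^2\geq\beta\kappa_1(\sigma_k^{11})^2\xi_1^2-(\text{mixed})$, invoking ``$\beta\kappa_1\sigma_k^{11}\geq 1$.'' But since $\kappa_1\sigma_k^{11}\leq k\sigma_k\leq kN_1$ is bounded, we have $\sigma_k^{11}=O(1/\kappa_1)$, hence $\beta\kappa_1(\sigma_k^{11})^2=\bigl(\beta\kappa_1\sigma_k^{11}\bigr)\sigma_k^{11}\to 0$ as $\kappa_1\to\infty$. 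A coefficient tending to zero cannot dominate a genuinely $O(1)$ negative contribution; your sketch needs the residual to be $O(\sigma_k^{11})$ or smaller, which you have not established. This is exactly the kind of delicate cancellation that the actual Ren--Wang theorems keep track of, and it is why the paper bypasses it entirely by quoting their inequality in the $a_i$-form and reducing by a pointwise comparison of coefficients rather than re-proving the core estimate.
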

\begin{proof}
We first prove for $k=n-2$, $n \geq 5$. By theorem 4 in \cite{Ren-Wang-2}, we have that 
\[\kappa_1\left[\beta\left(\sum_{i=1}^{n} \sigma_{k}^{ii}\xi_{i}\right)^{2}-\sum_{p \neq q} \sigma_{k}^{pp,qq}\xi_p\xi_q\right]-\sigma_{k}^{11}\xi_{1}^2+\sum_{i\neq 1} a_{i}\xi_{i}^2 \geq 0\] where 
\[a_i=\sigma_{k}^{ii}+(\kappa_1+\kappa_i)\sigma_{k}^{11,ii}.\]

Recall that 
\[\sigma_{k}^{11,ii}=\frac{\sigma_{k}^{ii}-\sigma_{k}^{11}}{\kappa_1-\kappa_i}\] and so we have 
\begin{align*}
a_{i}&=\sigma_{k}^{ii}+(\kappa_1+\kappa_i)\sigma_{k}^{11,ii} = \sigma_{k}^{ii} + \frac{\kappa_1+\kappa_i}{\kappa_1-\kappa_i}(\sigma_{k}^{ii}-\sigma_{k}^{11}) \\
&\leq \sigma_{k}^{ii}+\frac{\kappa_1+\kappa_i}{\kappa_1-\kappa_i}\sigma_{k}^{ii} \\
&=\frac{2\kappa_1}{\kappa_1-\kappa_i}\sigma_{k}^{ii}.
\end{align*}

The result then follows. Note that we have used lemma \ref{negative kappa} with $2k>n$ to get 
\[\frac{\kappa_1+\kappa_i}{\kappa_1-\kappa_i} \geq 0.\]

For $k=n-1$ with $n \geq 3$, we apply theorem 11 in \cite{Ren-Wang-1} to have that
\[\kappa_1\left[\beta\left(\sum_{i=1}^{n} \sigma_{n-1}^{ii}\xi_{i}\right)^{2}-\sum_{p \neq q} \sigma_{n-1}^{pp,qq}\xi_p\xi_q\right]-\sigma_{n-1}^{11}\xi_{1}^2+(1+\varepsilon)\sum_{i\neq 1} \sigma_{n-1}^{ii}\xi_{i}^2 \geq 0\] where $\varepsilon>0$ is arbitrary and $\beta$ depends on $\varepsilon$.

By lemma \ref{negative kappa} again, we can choose $\varepsilon$ small enough e.g. $\varepsilon=[(2n-2)/n]-1$ so that
\[\frac{2\kappa_1}{\kappa_1-\kappa_i}\geq \frac{2n-2}{n}=1+\varepsilon, \quad n \geq 3 \quad \text{and} \quad i \neq 1.\] The result then follows.
\end{proof}

\begin{lemma} \label{Lu}
Suppose $\kappa=(\kappa_1,\ldots,\kappa_n) \in \Gamma_k$ is ordered as $\kappa_1 \geq \cdots \geq \kappa_n$. Let $\xi \in \bR^n$ be an arbitrary vector and $1 \leq l<k$. Given $\varepsilon,\delta, \delta_0 \in (0,1)$, we can find some $\delta'>0$ such that if $\kappa_l \geq \delta \kappa_1$ and $\kappa_{l+1} \leq \delta'\kappa_1$, then 
\begin{equation}
-\sum_{p \neq q} \frac{\sigma_{k}^{pp,qq}\xi_{p}\xi_{q}}{\sigma_k}+\frac{\left(\sum_{i}\sigma_{k}^{ii}\xi_i\right)^2}{\sigma_{k}^2} \geq (1-\varepsilon)\frac{\xi_{1}^2}{\kappa_{1}^2}-\delta_0\sum_{i>l}\frac{\sigma_{k}^{ii}\xi_{i}^2}{\kappa_1\sigma_{k}}. \label{Lu's inequality}
\end{equation}
\end{lemma}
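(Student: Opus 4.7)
The plan is to recognize the left-hand side of the claimed inequality as $-\sum_{p,q}(\log\sigma_k)^{pp,qq}\xi_p\xi_q$, via the identity $(\log\sigma_k)^{pp,qq} = \sigma_k^{pp,qq}/\sigma_k - \sigma_k^{pp}\sigma_k^{qq}/\sigma_k^2$ together with $\sigma_k^{pp,pp}=0$. This places us in the familiar setting of the concavity of $\log\sigma_k$ on $\Gamma_k$ (inherited from the concavity of $\sigma_k^{1/k}$), and the task becomes to extract an explicit $\xi_1^2/\kappa_1^2$ lower bound from this concavity, up to the small correction term on the right-hand side.

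Separating the index $1$, I would decompose
\begin{align*}
-\sum_{p,q}(\log\sigma_k)^{pp,qq}\xi_p\xi_q
 \;=\; \frac{(\sigma_k^{11})^2}{\sigma_k^2}\xi_1^2
 \;-\; 2\sum_{q\neq 1}(\log\sigma_k)^{11,qq}\xi_1\xi_q
 \;+\; \Bigl(-\!\sum_{p,q\ge 2}(\log\sigma_k)^{pp,qq}\xi_p\xi_q\Bigr).
\end{align*}
The third block is nonnegative, since restricting the (globally nonnegative) quadratic form $-\mathrm{Hess}(\log\sigma_k)$ to vectors $(0,\xi_2,\dots,\xi_n)$ preserves its sign; it can simply be discarded. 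For the first block, the recursion $\sigma_k = \kappa_1\sigma_k^{11} + \sigma_k(\kappa|1)$ gives $(\sigma_k^{11})^2/\sigma_k^2 = (1 - \sigma_k(\kappa|1)/\sigma_k)^2/\kappa_1^2$, so the coefficient of $\xi_1^2$ is at least $(1-\varepsilon/2)/\kappa_1^2$ as soon as $\sigma_k(\kappa|1)/\sigma_k = O(\delta')$. This in turn is a combinatorial estimate: every degree-$k$ monomial of $\sigma_k(\kappa|1) = \sigma_k(\kappa_2,\dots,\kappa_n)$ must pick at least one factor from $\{l+1,\dots,n\}$, because $|\{2,\dots,l\}| = l-1 < k$; combined with the universal cap $|\kappa_q| \le C(n,k)\kappa_1$ (lemma \ref{negative kappa}) and the lower bound $\sigma_k \ge c(\delta,n,k)\kappa_1^k$ derived from lemma \ref{sigma_l} applied to the comparable block $\kappa_1,\dots,\kappa_l$, this yields the desired smallness.

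The cross terms are handled by invoking $\sigma_k^{11,qq} = (\sigma_k^{qq} - \sigma_k^{11})/(\kappa_1 - \kappa_q)$ (extended by continuity if $\kappa_1=\kappa_q$), which allows one to rewrite $(\log\sigma_k)^{11,qq}$ in terms of the small ratio $\sigma_k(\kappa|1)/\sigma_k$; the upshot is a bound $|(\log\sigma_k)^{11,qq}| \le C\delta'\,\sigma_k^{qq}/(\kappa_1\sigma_k)$ uniformly in $q\neq 1$. A split Cauchy-Schwarz $2|\xi_1\xi_q|\le \delta_0^{-1}\kappa_1\xi_q^2 + \delta_0\xi_1^2/\kappa_1$ then produces a $\delta_0\sigma_k^{qq}\xi_q^2/(\kappa_1\sigma_k)$ term (matching the RHS correction for $q > l$) plus an absorbable $O(\delta')\xi_1^2/\kappa_1^2$ residual; for $q\in\{2,\dots,l\}$, the analogous estimate uses the comparability $\kappa_q\approx\kappa_1$ to make both the $\xi_q^2$ and $\xi_1^2$ residuals of order $\delta'$, which can then be summed (over a fixed finite index set of size $l-1$) and absorbed. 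The principal obstacle will be this final bookkeeping: one must keep the Cauchy-Schwarz and summation constants uniform in $n$ and independent of the unknown ratios $\kappa_i/\kappa_1$, and ensure that the negative $\kappa_i$'s allowed in $\Gamma_k$ do not spoil the key estimate $\sigma_k(\kappa|1)/\sigma_k = O(\delta')$ — both of which are secured by combining lemma \ref{negative kappa} with the positivity of $\sigma_j(\kappa)$ for $1\le j\le k$ in the \Garding\ cone.
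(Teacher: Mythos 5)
Your high-level plan is reasonable and captures the right structural ideas: rewriting the left side as $-\sum_{p,q}(\log\sigma_k)^{pp,qq}\xi_p\xi_q$ (using $\sigma_k^{pp,pp}=0$), splitting off the index $1$, discarding the restriction of $-\mathrm{Hess}(\log\sigma_k)$ to $\{\xi_1=0\}$ (which is indeed PSD since $\log\sigma_k$ is concave on $\Gamma_k$), reducing the diagonal coefficient to the quantity $1-\sigma_k(\kappa|1)/\sigma_k$ via the recursion $\sigma_k=\kappa_1\sigma_k^{11}+\sigma_k(\kappa|1)$, and absorbing cross terms into the $\delta_0$-correction by Cauchy--Schwarz. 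The paper itself only cites Lu's Lemma 3.1, so there is no in-paper proof to compare against; but the argument you propose has a genuine gap at its core.

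The gap is in the claimed smallness $\sigma_k(\kappa|1)/\sigma_k=O(\delta')$, which your whole estimate (both the $(\sigma_k^{11})^2/\sigma_k^2$ coefficient and the cross-term bound $|(\log\sigma_k)^{11,qq}|\le C\delta'\sigma_k^{qq}/(\kappa_1\sigma_k)$) hinges on. Two steps of the argument fail. First, the lower bound $\sigma_k\ge c(\delta,n,k)\kappa_1^k$ is false: Lemma \ref{sigma_l} controls $\sigma_l$ for $l<k$, not $\sigma_k$, and when $l<k$ the value of $\sigma_k$ can be arbitrarily small compared to $\kappa_1^k$. Take for instance $l=1$, $\kappa_1=1$, $\kappa_2=\cdots=\kappa_n=\epsilon>0$ (which satisfies $\kappa_1\ge\delta\kappa_1$ and $\kappa_2\le\delta'\kappa_1$): then $\sigma_k\sim\binom{n-1}{k-1}\epsilon^{k-1}\to0$, so no such lower bound holds. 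Second, the combinatorial estimate for $\sigma_k(\kappa|1)=\sigma_k(\kappa_2,\dots,\kappa_n)$ requires that the forced factor from $\{l+1,\dots,n\}$ be \emph{small in absolute value}, but the hypothesis only gives the one-sided bound $\kappa_q\le\delta'\kappa_1$; the ``universal cap'' from Lemma \ref{negative kappa} allows $\kappa_q$ to be as negative as $-\tfrac{n-k}{k}\kappa_1$, so $|\kappa_q|$ may be $\approx C\kappa_1$ and the monomial bound degrades to $O(\kappa_1^k)$ rather than $O(\delta'\kappa_1^k)$. In short, the sign structure of the cone is doing real work that the proposal glosses over: one must exploit that if $\sigma_k(\kappa|1)\le0$ the diagonal coefficient is automatically $\ge1/\kappa_1^2$, while in the case $\sigma_k(\kappa|1)>0$ one has $(\kappa_2,\dots,\kappa_n)\in\Gamma_k$ and can then bound $\sigma_k(\kappa|1)$ by $C\kappa_2\cdots\kappa_{k+1}$ against $\kappa_1\sigma_{k-1}(\kappa|1)\ge\kappa_1\kappa_2\cdots\kappa_k$ to get the ratio $\le C\kappa_{k+1}/\kappa_1$ --- with similarly delicate case analysis for the cross terms $(\log\sigma_k)^{11,qq}$, whose sign also matters. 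As written, neither the stated lower bound nor the stated combinatorial cap can support the ``desired smallness,'' so the proof does not go through.
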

\begin{proof}
See lemma 3.1 in \cite{Lu-CVPDE}.
\end{proof}

Finally, we demonstrate the fact that $(k+1)$-convexity implies semi-convexity, which is a lemma proved by Li-Ren-Wang in \cite{Li-Ren-Wang}, for solutions of the Hessian equation $\sigma_k(D^2u)=f(x,u,Du)$. Here we prove the corresponding version for our curvature equation \eqref{curvature equation 1}.
\begin{lemma} \label{Li-Ren-Wang}
Suppose that $\Sigma$ is a $(k+1)$-convex solution to equation \eqref{curvature equation 1}. Then there exists some constant $\eta>0$ depending on known data of $\Sigma$ and $\Psi$ such that 
\[\kappa_i(X) \geq -\eta \quad \text{for all $1 \leq i \leq n$ and all $X \in \Sigma$}.\]
\end{lemma}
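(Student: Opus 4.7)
The plan is to bound the smallest principal curvature $\kappa_n$ (after ordering $\kappa_1 \geq \cdots \geq \kappa_n$) from below by a constant depending only on $n$, $k$, and $\sup \Psi$, where the supremum is over the (compact, thanks to the $C^1$ bound on $\Sigma$) unit normal bundle. If $\kappa_n \geq 0$ there is nothing to do, so I set $\lambda := -\kappa_n > 0$ and aim for a universal upper bound on $\lambda$. The argument is pointwise and purely algebraic; the only way the geometry enters is through the equation $\sigma_k(\kappa) = \Psi$ and the cone condition $\kappa \in \Gamma_{k+1}^n$.

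The first step is to transfer the cone condition to the deleted tuple $(\kappa|n) := (\kappa_1,\ldots,\kappa_{n-1})$. Using the splitting $\sigma_j(\kappa) = \sigma_j(\kappa|n) + \kappa_n\,\sigma_{j-1}(\kappa|n)$ from lemma \ref{sigma_k formulas} and inducting on $j = 1,\ldots,k+1$, the strict positivity $\sigma_j(\kappa) > 0$ forces
\[
\sigma_j(\kappa|n) > \lambda\,\sigma_{j-1}(\kappa|n) > 0, \qquad j = 1,\ldots,k+1,
\]
so that $(\kappa|n) \in \Gamma_{k+1}^{n-1}$. Reading off the $j = k+1$ line and combining the $j = k$ line with $\sigma_k(\kappa) = \Psi$ gives the two relations I will play against each other:
\[
\sigma_k(\kappa|n) = \Psi + \lambda\,\sigma_{k-1}(\kappa|n), \qquad \sigma_{k+1}(\kappa|n) > \lambda\,\sigma_k(\kappa|n).
\]

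The second step uses Newton's inequality $\sigma_{k-1}\sigma_{k+1} \leq c_N\,\sigma_k^2$ (a classical inequality valid for any real $(n-1)$-tuple, with explicit constant $c_N = k(n-1-k)/((k+1)(n-k)) < 1$) applied to $(\kappa|n)$. Combined with the second relation above,
\[
\sigma_{k-1}(\kappa|n) \leq \frac{c_N\,\sigma_k(\kappa|n)^2}{\sigma_{k+1}(\kappa|n)} < \frac{c_N\,\sigma_k(\kappa|n)}{\lambda}.
\]
Substituting into the first relation and absorbing $c_N \sigma_k(\kappa|n)$ to the left gives the uniform bound $\sigma_k(\kappa|n) < \Psi/(1 - c_N)$. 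The third and final step invokes the Newton-Maclaurin inequality on $(\kappa|n) \in \Gamma_{k+1}^{n-1}$ in the form $\sigma_{k+1}(\kappa|n) \leq c_M\,\sigma_k(\kappa|n)^{(k+1)/k}$; combining with $\sigma_{k+1}(\kappa|n) > \lambda\,\sigma_k(\kappa|n)$ and the bound just obtained,
\[
\lambda < c_M\,\sigma_k(\kappa|n)^{1/k} \leq c_M\left(\frac{\sup\Psi}{1 - c_N}\right)^{1/k} =: \eta,
\]
which depends only on $n$, $k$, and $\sup\Psi$. The one step requiring real care is the inductive verification that $(\kappa|n) \in \Gamma_{k+1}^{n-1}$; without that, Newton-Maclaurin in the last step would not apply. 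Everything else is mechanical Newton/Maclaurin bookkeeping, and the two key relations in step one couple so that $\sigma_{k-1}$ is squeezed from above by Newton while $\sigma_{k+1}$ is squeezed from above by Maclaurin, producing the desired bound on $\lambda$.
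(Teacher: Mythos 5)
Your proof is correct, but it is a genuinely different argument from the one in the paper. The paper's proof is shorter and more elementary: it applies lemma~\ref{sigma_l} at level $l=k$ inside $\Gamma_{k+1}$ to get $\Psi = \sigma_k(\kappa) \geq \kappa_1\cdots\kappa_k \geq \kappa_k^k$, hence $\kappa_k \leq (\sup\Psi)^{1/k}$, and then uses the standard $\Gamma_k$ fact that any sum of $n-k+1$ components is positive to conclude $0 < \kappa_k + \cdots + \kappa_n \leq (n-k)\kappa_k + \kappa_n$, giving $\kappa_n > -(n-k)(\sup\Psi)^{1/k}$ and the clean constant $\eta = n(\sup\Psi)^{1/k}$. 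You instead transfer the cone condition to the deleted tuple $(\kappa|n) \in \Gamma_{k+1}^{n-1}$ via induction on the splitting $\sigma_j(\kappa) = \sigma_j(\kappa|n) + \kappa_n\sigma_{j-1}(\kappa|n)$, then play Newton's inequality $\sigma_{k-1}\sigma_{k+1} \leq c_N\sigma_k^2$ (valid for any real tuple) against the relation $\sigma_{k+1}(\kappa|n) > \lambda\sigma_k(\kappa|n)$ to squeeze $\sigma_k(\kappa|n) < \Psi/(1-c_N)$, and finish with Maclaurin's inequality on $(\kappa|n)$. Both proofs encode the same basic tension --- in $\Gamma_{k+1}$, a large negative $\kappa_n$ forces $\sigma_k$ to be large --- but yours does it through the classical Newton--Maclaurin machinery and the cone-transfer trick, while the paper's exploits two pre-packaged facts about \Garding\ cones. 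The paper's constant is simpler; your cone-transfer step (establishing $(\kappa|n) \in \Gamma_{k+1}^{n-1}$) is the nontrivial point, as you note, and is a useful and robust device in its own right. One small remark: the paper's inequality $\kappa_1\cdots\kappa_k \geq \kappa_k^k$ implicitly assumes $\kappa_k > 0$; if $\kappa_k \leq 0$ the bound $\kappa_k \leq \eta/n$ is trivial, so the paper's proof is also complete, but your version avoids this case split entirely by working only with $\lambda = -\kappa_n > 0$.
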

\begin{proof}
The proof is adapted from \cite[Lemma 7]{Li-Ren-Wang}. We choose $\eta$ such that 
\[\left(\frac{\eta}{n}\right)^k \geq \sup_{X \in \Sigma} \Psi(X,\nu).\] Let $\kappa_1 \geq \kappa_2 \geq \cdots \kappa_n$ denote the principal curvatures of $\Sigma$. Since $\Sigma$ is  $(k+1)$-convex, we have that $\kappa=(\kappa_1,\ldots,\kappa_n) \in \Gamma_{k+1}$. Hence by using lemma \ref{sigma_l}, we have that 
\[\sigma_k(\kappa) \geq \kappa_1\cdots\kappa_k \geq \kappa_{k}^{k}.\] It follows that 
\[\kappa_k \leq \frac{\eta}{n}.\]

Also, using $\kappa \in \Gamma_k$, we have that the sum of any $(n-k+1)$ components of $\kappa$ is positive. In particular, we have
\[\sum_{i=k}^{n} \kappa_i>0\] which implies that 
\begin{align*}
0<\kappa_k+\kappa_{k+1}+\cdots+\kappa_n \leq (n-k)\kappa_k+\kappa_n \leq \frac{n-k}{n}\eta+\kappa_n
\end{align*} i.e.
\[\kappa_n+\eta \geq 0.\]
\end{proof}
\section{A Model Computation} \label{C^2 estimates 1}
In this section, we establish preparatory work for obtaining the following curvature estimate, which will imply theorem \ref{theorem 2k>n}, theorem \ref{theorem n-2}, theorem \ref{theorem curvature measure},  and theorem \ref{weak convexity}.
\begin{theorem} \label{curvature estimate}
Let $\Sigma=\{(r(z),z): z \in M\}$ be a closed $k$-convex hypersurface in $\overline{M}$ which is strictly star-shaped with respect to the origin and satisfies equation \eqref{curvature equation 2} for some positive function $\Psi(V,\nu) \in C^2(\Gamma)$, where $\Gamma$ is an open neighborhood of the unit normal bundle of $\Sigma$ in $\overline{M} \times \bS^n$.

Suppose that we have uniform control $0<r_1<r(z)<r_2<b$ and $|r|_{C^1} \leq r_3$. If additionally either of the following holds:
\begin{itemize}[itemsep=5pt]
\item $k=n-1, n \geq 3$ or $k=n-2, n \geq 5$,
\item $\Sigma$ is semi-convex i.e. there exists some $\eta>0$ such that for all $X \in \Sigma$ and all $1 \leq i \leq n$, we have $\kappa_{i}(X) \geq -\eta$.
\item $\Sigma$ is $(k+1)$-convex i.e. $\kappa \in \Gamma_{k+1}$, or 
\item $\Phi(V,\nu)=\langle V,\nu\rangle^p \psi(V)$ for $p \in (-\infty,0)\cup (0,1]$ and $\psi \in C^2(\Sigma)$,
\end{itemize} then there exists some $C>0$ depending on $n,k,p, r_1, r_2, r_3, \inf \Psi$, $\norm{\Psi}_{C^2}$ and $\overline{R}$ such that 
\[\max_{z \in M} |\kappa_{\max}(z)| \leq C.\]
\end{theorem}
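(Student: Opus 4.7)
The plan is to unify the four cases by running a maximum principle argument at the maximum point of a single type of test function, varying only the choice among the two candidates \eqref{my test function} and \eqref{Spruck-Xiao test function} depending on which case we are in. For the first three bullets I would take
\[Q(X) = \log \kappa_{\max}(X) - N \log u(X) + \alpha \Phi(r(X)),\]
while for the curvature-measure equation the $u^p$ factor on the right-hand side makes it natural to replace $N \log u$ by $\log(u-a)$ with $a$ slightly smaller than $\inf_\Sigma u$. Since $\kappa_{\max}$ may be non-smooth at points of higher multiplicity, the first step is Chu's perturbation: at the max point $X_0$ of $Q$, choose a normal frame diagonalizing $h_{ij}$ with $h_{11}=\kappa_1=\cdots$ and replace $h_{11}$ by $\tilde h_{11} = h_{11} - B_{ij}\zeta^i\zeta^j$ for a small constant matrix $B$ that separates the top eigenvalue; this makes $\log \tilde h_{11}$ smooth and I then work with $\tilde Q$ which still attains its max at $X_0$.

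At $X_0$ I apply $F^{ii} := \partial\sigma_k/\partial h_{ii}$ to the critical/semi-definite relations $Q_i=0$ and $F^{ii}Q_{ii}\leq 0$. Differentiating the equation $\sigma_k(h)=\Psi(V,\nu)$ twice in direction $E_1$ and commuting covariant derivatives via lemma \ref{commutator formula} produces
\[F^{ii} h_{11,ii} = - F^{ij,kl} h_{ij,1}h_{kl,1} + (\Psi)_{11} + h_{11}\,\text{tr-type curvature terms} + h_{11}^2 \sum_i F^{ii}h_{ii} - h_{11}\sum_i F^{ii}h_{ii}^2,\]
modulo bounded lower-order pieces coming from $\overline R$ and $\overline\nabla\,\overline R$. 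Expanding $F^{ii}u_{ii}$ and $F^{ii}\Phi_{ii}$ through lemma \ref{geometric formulas} converts the $-N F^{ii} u_{ii}/u$ and $\alpha F^{ii}\Phi_{ii}$ contributions into $\tfrac{N}{u} F^{ii}h_{ii}^2 u + \alpha\phi'\sum F^{ii}$ plus controllable lower-order terms; by lemma \ref{kappa squared times f_i} and the fact that $\sum F^{ii}$ is bounded below in $\Gamma_k$, these two pieces together already dominate the $-h_{11}\sum F^{ii}h_{ii}^2$ coming from the second fundamental form squared, provided $\alpha$ (and then $N$) are chosen large enough compared to $\inf\Psi$ and $\overline R$. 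What is left on the "bad" side is
\[\mathcal B := -\frac{F^{ij,kl}h_{ij,1}h_{kl,1}}{h_{11}} - \frac{F^{ii}h_{11,i}^2}{h_{11}^2} + \text{(linearized } \Psi_\nu\text{ contribution)},\]
together with the standard positive "good" third-order term $\sum_{i\ne 1}\frac{2 F^{ii}}{h_{11}(\kappa_1-\kappa_i)}h_{11,i}^2$ obtained from the Codazzi reshuffling of $-F^{ij,kl}h_{ij,1}h_{kl,1}$.

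Eliminating $\mathcal B$ is the step where the four cases branch, and this is where the main obstacle lies. Using the critical equation $h_{11,i}/h_{11} = N u_i/u - \alpha \Phi_i$, I replace $h_{11,i}^2$ by $h_{ii,1}^2$-type quantities and feed $\xi_i := h_{ii,1}$ into either lemma \ref{Ren-Wang} (for $k=n-1$ or $k=n-2$) or lemma \ref{Lu} (for the semi-convex and $(k+1)$-convex cases, noting that by lemma \ref{Li-Ren-Wang} the latter reduces to the former). The Ren--Wang inequality directly absorbs $-\sum_{p\ne q}F^{pp,qq}\xi_p\xi_q$ against $-F^{11}\xi_1^2 + \sum_{i\ne 1}\tfrac{2\kappa_1}{\kappa_1-\kappa_i}F^{ii}\xi_i^2$ plus an $\kappa_1\beta(\sum F^{ii}\xi_i)^2$ quadratic which is exactly compensated by the quadratic-in-gradient term on the right-hand side of $(\Psi(V,\nu))_{11}$ once we combine it with the $(\log\Psi)_{11}$ identity. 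In the semi-convex case one first splits indices into $\{i:\kappa_i\ge \delta\kappa_1\}$ and the rest, applying Lu's inequality \eqref{Lu's inequality} on the small-eigenvalue block and exploiting semi-convexity (the hypothesis $\kappa_i\ge -\eta$) to control the sign on the large-eigenvalue block. For the prescribed curvature measure equation the Spruck-Xiao test function supplies an extra factor $\tfrac{1}{u-a}$ whose derivative $\tfrac{u_i}{(u-a)^2}$ couples with $\Psi_\nu\sim p u^{p-1}$ in such a way that the explicit $p\le 1$ constraint is exactly what is needed for the bad $(\log u)$ contribution to be absorbable, closing the argument. In every case, once $\mathcal B$ is neutralized, the remaining inequality reads $0 \ge c\,h_{11} - C$ for some positive $c = c(\alpha, N, \inf\Psi, \overline R)$ and $C = C(n,k,p,r_i,\|\Psi\|_{C^2},\overline R)$, and hence $h_{11}(X_0)\le C/c$, which is the sought estimate.
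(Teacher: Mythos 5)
Your proposal follows the paper's route almost exactly for the first three bullets: the same test function $\log\kappa_{\max}-N\log u+\alpha\Phi$, Chu's perturbation for the multiplicity of $\kappa_{\max}$, differentiation of the equation with the commutator identity of Lemma~\ref{commutator formula}, expansion of $F^{ii}u_{ii}$ and $F^{ii}\Phi_{ii}$ via Lemma~\ref{geometric formulas} to generate $(N-1)\sum F^{ii}\kappa_i^2$ and $\alpha\phi'\sum F^{ii}$, and then invoking Lemma~\ref{Ren-Wang} (for $k=n-1,\,n-2$) or Lemma~\ref{Lu} plus an iteration on eigenvalue gaps together with semi-convexity (for the remaining case, with $(k+1)$-convexity reduced to semi-convexity by Lemma~\ref{Li-Ren-Wang}). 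That part of the sketch is faithful and correct in its essentials.

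The fourth bullet, however, is sketched too thinly to constitute a proof, and it glosses over exactly the points that make that case hard. First, with $N=1$ the only positive multiple of $\sum F^{ii}\kappa_i^2$ you retain is $\frac{a}{u-a}\sum F^{ii}\kappa_i^2$, which is why the paper requires $u\geq 2a$ (not merely ``$a$ slightly smaller than $\inf_\Sigma u$'') and then \emph{splits} this small positive term into three pieces (the paper's \eqref{splitting}). Second, the index set $I$ of very negative principal curvatures is no longer empty (there is no $2k>n$ or semi-convexity hypothesis), so the term $\sum_{i\in I}\frac{(2\varepsilon-1)\kappa_1+\tilde\kappa_i}{\kappa_1-\tilde\kappa_i}\frac{F^{ii}h_{11i}^2}{\kappa_1^2}$ can genuinely be negative; the paper controls it by pairing each $i\in I$ with the piece $\frac{1}{n}\frac{a}{u-a}F^{ii}\kappa_i^2$ from the splitting (Claim~\ref{claim 2}), using $|\kappa_i|\geq\kappa_1$ for $i\in I$. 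Third, you still need a concavity mechanism for $-\sum F^{pp,qq}h_{pp1}h_{qq1}$: the paper runs a dichotomy (Case A uses the concavity of $\sigma_k^{1/k}$ together with $\sum F^{ii}\gtrsim\kappa_1\cdots\kappa_{k-1}$, while Case B re-deploys Lu's Lemma~\ref{Lu} with the same iteration as in the semi-convex case). Finally, the role of $p\leq1$ appears concretely only at the very end, in the inequality~\eqref{p inequality 3}, as the sign condition on $(1-p)\frac{F}{\kappa_1}\frac{u_1^2}{u^2}$; saying the $u_i/(u-a)^2$ derivative ``couples with $\Psi_\nu\sim p u^{p-1}$'' does not by itself produce that cancellation. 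So for the curvature-measure case you have identified the right test function but not the mechanism that makes the argument close, and a full proof would have to supply the splitting, the treatment of $I$, and the Case~A/Case~B dichotomy.
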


The proof for different cases will be different. More specifically, for the first three cases, we will use the test function
\[Q=\log \kappa_{\max}-N\log u + \alpha \Phi\] with $N,\alpha>0$ being some large numbers, in order to make the concavity inequalities in lemma \ref{Ren-Wang} and lemma \ref{Lu} applicable.

In contrast, for the fourth case, we will be working with the test function
\[Q=\log \kappa_{\max}-\log(u-a)+\alpha \Phi\] without the large number $N$ and where $a>0$ satisfies $u \geq 2a$. This is because $N$ would have produced a negative term with the coefficient $-N$ which are difficult to handle without a handy concavity inequality, if we would like $N$ to be large. In this case, we have to make delicate use of the special structure of the right-hand side $\Psi(X,\nu)=u^p\psi(X)$ with the help of the gauge term $a$.

However, a large part of the derivation will be almost exactly the same in all the four cases. Therefore, in this section, we will do a "model computation" for the combined test function
\[Q=\log \kappa_{\max}-N\log(u-a)+\alpha \Phi.\] The purpose for doing so is to reduce the labor and shorten the proof. More precisely, we will substitute $a=0$ in section \ref{C^2 estimates 2} and $N=1$ in section \ref{C^2 estimates 3}, then we can proceed from there and thus redundant calculations will be eliminated.

Now we begin the calculations. Define 
\[Q(z,\xi)=\log \sff(\xi,\xi)-N\log(u-a)+\alpha \Phi\] where $\xi$ is a unit tangent vector to $\Sigma$ at $X=(r(z),z)$ and $\sff$ is the second fundamental form.
Suppose that $Q$ attains its maximum at $X_0=(r(z_0),z_0)$. We may choose a local orthonormal frame $\{E_1,\ldots, E_n\}$ such that at this point we have $\xi=E_1$ and $h_{ij}=\kappa_i\delta_{ij}$, where the principal curvatures are ordered as $\kappa_1 \geq \kappa_2 \geq \cdots \geq \kappa_n$. 

\begin{remark}
Technically speaking, our operation here is not really valid; because the point $X_0$ and hence the local orthonormal frame $\{e_i\}$ depend on the choice of the test function $Q$.

Here we are merely doing a symbolic computation formally! This is for not to repeat this part of calculation in section \ref{C^2 estimates 2} and section \ref{C^2 estimates 3}.
\end{remark}

Now, note that if $\kappa_1$ has multiplicity more than one i.e.
\[\kappa_1=\kappa_2=\cdots=\kappa_m > \kappa_{m+1} \geq \cdots \geq \kappa_n \quad \text{for some $m>1$},\]
then $Q$ is not smooth at $X_0$. To resolve this issue, we apply a standard perturbation argument following Chu \cite{Chu}. Let $g$ be the first fundamental form of $\Sigma$. Near $X_0$, we define a new tensor $B$ by 
\[B(V_1,V_2)=g(V_1,V_2)-g(V_1,E_1)g(V_2,E_1)\] for tangent vectors $V_1$ and $V_2$. Denote by $B_{ij}=B(E_i,E_j)$ and it is clear that $B_{ij}=\delta_{ij}(1-\delta_{1i})$. We now define a new matrix by $\tilde{h}_{ij}:=h_{ij}-B_{ij}$ whose eigenvalues are ordered as 
\[\tilde{\kappa}_1 \geq \tilde{\kappa}_{2} \geq \cdots \geq \tilde{\kappa}_n.\] Note that $\kappa_1 \geq \tilde{\kappa}_{1}$ near $X_0$ and 
\[\tilde{\kappa}_i=\begin{cases}
\kappa_1, & i =1 \\
\kappa_i-1, & i>1
\end{cases}\] at $X_0$. It then follows that $\tilde{\kappa}_1$ has multiplicity one and hence that the perturbed test function 
\[\widetilde{Q}:=\log \tilde{\kappa}_1 - N\log (u-a) + \alpha\Phi\] is smooth at $X_0$. Below we will proceed to perform a lengthy and tedious computation at the point $X_0$; the reader may start from \eqref{final inequality}.

\begin{notation}
In our derivation below, the symbol $C$ will denote some controlled quantity whose magnitude depends on some known data. Its value may change from line to line without explicitly saying so, but it will still be denoted by the same symbol. This would not cause confusion because its magnitude is not of relevance in our analysis.

Also, whenever we say the vague phrase "$\kappa_1$ is sufficiently large", it would mean that given any such a constant $C>0$, we must have $\kappa_1 > C$ otherwise the desired curvature bound $\kappa_1 \leq C$ would have been obtained already.

We also denote 
\[F^{ij}=\frac{\partial F}{\partial h_{ij}}, \quad F^{ij,kl}=\frac{\partial^2 F}{\partial h_{ij}h_{kl}}.\]
\end{notation}
Using $\tilde{h}_{ij}=h_{ij}-B_{ij}$, we have that at $X_0$,
\begin{align*}
(\tilde{\kappa}_{1})_{i}&=\tilde{h}_{11i}=h_{11i},\\
(\tilde{\kappa}_1)_{ii} & = \tilde{h}_{11ii}+2\sum_{p \neq 1}\frac{\tilde{h}_{1pi}^2}{\tilde{\kappa}_1-\tilde{\kappa}_p}= h_{11ii}+2\sum_{p\neq 1}\frac{h_{1pi}^2}{\kappa_1-\tilde{\kappa}_p}
\end{align*} and so 
\begin{align}
0&=\frac{(\tilde{\kappa}_{1})_i}{\tilde{\kappa}_1}-N\frac{u_i}{u-a}+\alpha \Phi_i = \frac{h_{11i}}{\kappa_1}-N\frac{u_i}{u-a}+\alpha\Phi_i, \label{1st critical 1} \\
0&\geq \frac{(\tilde{\kappa}_{1})_{ii}}{\tilde{\kappa}_1}-\frac{(\tilde{\kappa}_1)_{i}^2}{\tilde{\kappa}_{1}^2}-N\frac{u_{ii}}{u-a}+N\frac{u_{i}^2}{(u-a)^2}+\alpha \Phi_{ii} \nonumber \\
&=\frac{h_{11ii}}{\kappa_1}+2\sum_{p\neq 1}\frac{h_{1pi}^2}{\kappa_1(\kappa_1-\tilde{\kappa}_p)}-\frac{h_{11i}^2}{\kappa_{1}^2}-N\frac{u_{ii}}{u-a}+N\frac{u_{i}^2}{(u-a)^2}+\alpha \Phi_{ii}. \label{2nd critical 1}
\end{align}

Contracting \eqref{2nd critical 1} with $F=\sigma_k$, we obtain that 
\begin{gather}\label{contract with F}
\begin{split}
0 &\geq \sum_{i} \frac{F^{ii}h_{11ii}}{\kappa_1}+2\sum_{i}\sum_{p \neq 1}\frac{F^{ii}h_{1pi}^2}{\kappa_1(\kappa_1-\tilde{\kappa}_p)}-\sum_{i}\frac{F^{ii}h_{11i}^2}{\kappa_{1}^2} \\
&\quad -N\sum_{i} \frac{F^{ii}u_{ii}}{u-a}+N\sum_{i}\frac{F^{ii}u_{i}^2}{(u-a)^2}+\alpha \sum_{i} F^{ii}\Phi_{ii}. 
\end{split}
\end{gather}

By the commutator formula (see lemma \ref{commutator formula}),
\begin{align*}
\nabla_{11}h_{ii}=&\ \nabla_{ii}h_{11}+h_{11}h_{ii}^2-h_{11}^2h_{ii}+2(h_{ii}-h_{11})\overline{R}_{i1i1} \\
&+h_{11}\overline{R}_{i0i0}-h_{ii}\overline{R}_{1010}+\overline{R}_{i1i0;1}-\overline{R}_{1i10;i}
\end{align*}
from which it follows that for some $C>0$ depending on $k, \overline{R}$ and $\Psi$
\begin{equation}
F^{ii}h_{11ii} \geq F^{ii}h_{ii11}-\kappa_1\sum_i F^{ii} \kappa_{i}^2+\kappa_{1}^2k\Psi-C\kappa_1\sum_i F^{ii} - C \label{1st term}.
\end{equation} where we have used lemma \ref{sigma_k formulas} to get $\sum F^{ii}\kappa_i=kF=k\Psi$.

Also, for the second term in \eqref{contract with F}, we have 
\begin{align}
2\sum_{i}\sum_{p \neq 1}\frac{F^{ii}h_{1pi}^2}{\kappa_1(\kappa_1-\tilde{\kappa}_p)}&\geq 2\sum_{p\neq 1}\frac{F^{pp}h_{1pp}^2}{\kappa_1(\kappa_1-\tilde{\kappa}_p)}+2\sum_{p\neq 1}\frac{F^{11}h_{1p1}^2}{\kappa_1(\kappa_1-\tilde{\kappa}_p)}. \label{2nd term}
\end{align}

Now, we need to be cautious that the second fundamental form is no longer Codazzi and extra care with the third order terms must be involved. Recall from lemma \ref{commutator formula}, we have that
\begin{equation}
h_{ijk}=h_{ikj}+\overline{R}_{0ijk}. \label{Codazzi}
\end{equation} Since there will also be lots of squaring operations, we introduce a convenient algebraic lemma from \cite{Hou-Ma-Wu}: for any $0 < \varepsilon < 1$, we have
\begin{equation}
|a+b|^2 \geq \varepsilon |a|^2-\frac{\varepsilon}{1-\varepsilon}|b|^2, \quad a,b \in \bC^n. \label{HMW}
\end{equation}

By inserting \eqref{1st term} and \eqref{2nd term} into \eqref{contract with F}, we have
\begin{gather} \label{2nd critical 2}
\begin{split}
0&\geq \sum_i \frac{F^{ii}h_{ii11}}{\kappa_1}-\sum_i \kappa_{i}^2F^{ii}+\kappa_1\ k\Psi-C\sum F^{ii} - C\\
&\quad +2\sum_{p \neq 1}\frac{F^{ii}h_{1pp}^2}{\kappa_1(\kappa_1-\tilde{\kappa}_p)}+2\sum_{p\neq 1}\frac{F^{11}h_{1p1}^2}{\kappa_1(\kappa_1-\tilde{\kappa}_p)}-\sum_{i}\frac{F^{ii}h_{11i}^2}{\kappa_{1}^2}\\
&\quad -N\sum_{i} \frac{F^{ii}u_{ii}}{u-a}+N\sum_{i}\frac{F^{ii}u_{i}^2}{(u-a)^2}+\alpha\sum_{i} F^{ii}\Phi_{ii}
\end{split}
\end{gather}

Now let us evaluate the third line. By lemma \ref{geometric formulas}, we have
\begin{align*}
u_{ii}&=\sum_{k} (h_{iik}-\overline{R}_{0iik})\Phi_{k}+\phi'\kappa_i-\kappa_{i}^2u\\
\Phi_{ii}&=\phi'-u\kappa_i \\
\end{align*} and hence \eqref{2nd critical 2} becomes
\begin{gather} \label{2nd critical 3}
\begin{split}
0&\geq \sum_i \frac{F^{ii}h_{ii11}}{\kappa_1}-\sum_i F^{ii}\kappa_{i}^2+\kappa_1\ k\Psi-C\sum F^{ii} - C+N\sum_{i}\frac{F^{ii}u_{i}^2}{(u-a)^2}\\
&\quad +2\sum_{p \neq 1}\frac{F^{ii}h_{1pp}^2}{\kappa_1(\kappa_1-\tilde{\kappa}_p)}+2\sum_{p\neq 1}\frac{F^{11}h_{1p1}^2}{\kappa_1(\kappa_1-\tilde{\kappa}_p)}-\sum_{i}\frac{F^{ii}h_{11i}^2}{\kappa_{1}^2}\\
&\quad -\frac{N}{u-a}\sum_{i}F^{ii}\left[\sum_{k} (h_{iik}-\overline{R}_{0iik})\Phi_{k}+\phi'\kappa_i-\kappa_{i}^2u\right]+\alpha \sum_{i} F^{ii}(\phi'-u\kappa_i) \\
&=\sum_i \frac{F^{ii}h_{ii11}}{\kappa_1}-\sum_i F^{ii}\kappa_{i}^2+\kappa_1\ k\Psi-C\sum F^{ii}-C +N\sum_{i}\frac{F^{ii}u_{i}^2}{(u-a)^2}\\
&\quad +2\sum_{p \neq 1}\frac{F^{ii}h_{1pp}^2}{\kappa_1(\kappa_1-\tilde{\kappa}_p)}+2\sum_{p\neq 1}\frac{F^{11}h_{1p1}^2}{\kappa_1(\kappa_1-\tilde{\kappa}_p)}-\sum_{i}\frac{F^{ii}h_{11i}^2}{\kappa_{1}^2}+\frac{N}{u-a}\sum F^{ii}\overline{R}_{0iik}\Phi_k\\
&\quad -\frac{N}{u-a}\sum_{k}\left(\sum_{i} F^{ii}h_{iik}\Phi_k\right)-\frac{Nk\phi'}{u-a}\Psi+\frac{Nu}{u-a}\sum_{i}F^{ii}\kappa_{i}^2 + \alpha \phi' \sum_{i} F^{ii} - \alpha k u \Psi \\
&\geq \left(\frac{Nu}{u-a}-1\right)\sum_{i}F^{ii}\kappa_{i}^2+(\alpha \phi' - C)\sum_{i} F^{ii}+\kappa_1k\Psi -C\alpha- C\\
&\quad +\sum_i \frac{F^{ii}h_{ii11}}{\kappa_1}-\frac{N}{u-a}\sum_{k}\left(\sum_{i} F^{ii}h_{iik}\Phi_k\right)+N\sum_{i}\frac{F^{ii}u_{i}^2}{(u-a)^2}\\
&\quad +2\sum_{p \neq 1}\frac{F^{ii}h_{1pp}^2}{\kappa_1(\kappa_1-\tilde{\kappa}_p)}+2\sum_{p\neq 1}\frac{F^{11}h_{1p1}^2}{\kappa_1(\kappa_1-\tilde{\kappa}_p)}-\sum_{i}\frac{F^{ii}h_{11i}^2}{\kappa_{1}^2} \\
\end{split}
\end{gather} where we have used lemma \ref{sigma_k formulas} to get $\sum_{i} F^{ii}\kappa_i=kF=k\Psi$.

Next, we proceed to handle the $h_{ii11}$ term. By differentiating \eqref{curvature equation 2} twice, we obtain that 
\begin{align}
F^{ii}h_{iik}&=\Psi_{V}(\nabla_{E_k}V)+h_{ks}\Psi_{\nu}(E_s) \label{differentiate once} \\
\text{and} \quad F^{ii}h_{ii11}&+F^{ij,kl}h_{ij1}h_{kl1}=\nabla_{11}\Psi \label{differentiate twice}
\end{align}
Thus, we have
\begin{gather} \label{2nd critical 4}
\begin{split}
0&\geq \left(\frac{Nu}{u-a}-1\right)\sum_{i}F^{ii}\kappa_{i}^2+(\alpha \phi' - C)\sum_{i} F^{ii}+kF\kappa_1-C\alpha-C\\
&\quad -\sum_{i,j,k,l}\frac{F^{ij,kl}h_{ij1}h_{kl1}}{\kappa_1} +\frac{\Psi_{11}}{\kappa_1} - \frac{N}{u-a} \sum_{k} \left(\sum_{i} F^{ii}h_{iik}\Phi_k\right)\\
&\quad +2\sum_{p \neq 1}\frac{F^{ii}h_{1pp}^2}{\kappa_1(\kappa_1-\tilde{\kappa}_p)}+2\sum_{p\neq 1}\frac{F^{11}h_{1p1}^2}{\kappa_1(\kappa_1-\tilde{\kappa}_p)}-\sum_{i}\frac{F^{ii}h_{11i}^2}{\kappa_{1}^2}+N\sum_{i}\frac{F^{ii}u_{i}^2}{(u-a)^2}.
\end{split}
\end{gather}

Next, by an inequality due to Andrews \cite{Andrews} and Gerhardt \cite{Gerhardt}, we have
\begin{align*}
-\sum_{i,j,k,l}\frac{F^{ij,kl}h_{ij1}h_{kl1}}{\kappa_1} &=-\sum_{p\neq q} \frac{F^{pp,qq}h_{pp1}h_{qq1}}{\kappa_1} + \sum_{p\neq q} \frac{F^{pp,qq}h_{pq1}^2}{\kappa_1} \\
&\geq -\sum_{p\neq q} \frac{F^{pp,qq}h_{pp1}h_{qq1}}{\kappa_1} + 2\sum_{i>m}\frac{1}{\kappa_1}\frac{F^{ii}-F^{11}}{\kappa_1-\tilde{\kappa}_i}h_{1i1}^2
\end{align*} and so \eqref{2nd critical 4} becomes

\begin{gather}\label{2nd critical 5}
\begin{split}
0&\geq \left(\frac{Nu}{u-a}-1\right)\sum_{i}F^{ii}\kappa_{i}^2+(\alpha \phi' - C)\sum_{i} F^{ii}+kF\kappa_1-C\alpha-C\\
&\quad -\sum_{p\neq q} \frac{F^{pp,qq}h_{pp1}h_{qq1}}{\kappa_1} + 2\sum_{i>m}\frac{1}{\kappa_1}\frac{F^{ii}-F^{11}}{\kappa_1-\tilde{\kappa}_i}h_{1i1}^2\\
&\quad +2\sum_{p \neq 1}\frac{F^{ii}h_{1pp}^2}{\kappa_1(\kappa_1-\tilde{\kappa}_p)}+2\sum_{p\neq 1}\frac{F^{11}h_{1p1}^2}{\kappa_1(\kappa_1-\tilde{\kappa}_p)}-\sum_{i}\frac{F^{ii}h_{11i}^2}{\kappa_{1}^2}+N\sum_{i}\frac{F^{ii}u_{i}^2}{(u-a)^2}\\
&\quad +\frac{\Psi_{11}}{\kappa_1} - \frac{N}{u-a} \sum_{k} \left(\sum_{i} F^{ii}h_{iik}\Phi_k\right)
\end{split}
\end{gather}

Regrouping terms, we get
\begin{gather}
\begin{split}
0&\geq \left(\frac{Nu}{u-a}-1\right)\sum_{i}F^{ii}\kappa_{i}^2+(\alpha \phi' - C)\sum_{i} F^{ii}+kF\kappa_1-C\alpha-C \\
&\quad -\sum_{p\neq q} \frac{F^{pp,qq}h_{pp1}h_{qq1}}{\kappa_1}-\frac{F^{11}h_{111}^2}{\kappa_{1}^2}+2\sum_{i \neq 1}\frac{F^{ii}h_{1ii}^2}{\kappa_1(\kappa_1-\tilde{\kappa}_i)}+N\frac{F^{11}u_{1}^2}{(u-a)^2}\\
&\quad +2\sum_{i>m}\frac{1}{\kappa_1}\frac{F^{ii}-F^{11}}{\kappa_1-\tilde{\kappa}_i}h_{1i1}^2+2\sum_{i \neq 1}\frac{F^{11}h_{1i1}^2}{\kappa_1(\kappa_1-\tilde{\kappa}_i)}-\sum_{i \neq 1}\frac{F^{ii}h_{11i}^2}{\kappa_{1}^2}+N\sum_{i \neq 1}\frac{F^{ii}u_{i}^2}{(u-a)^2}\\
&\quad +\frac{\Psi_{11}}{\kappa_1} - \frac{N}{u-a} \sum_{k} \left(\sum_{i} F^{ii}h_{iik}\Phi_k\right)
\end{split}
\end{gather}

For the third line, we use \eqref{Codazzi} and \eqref{HMW} to get that
\begin{align*}
&\quad 2\sum_{i>m}\frac{1}{\kappa_1}\frac{F^{ii}-F^{11}}{\kappa_1-\tilde{\kappa}_i}h_{1i1}^2+2\sum_{i \neq 1}\frac{F^{11}h_{1i1}^2}{\kappa_1(\kappa_1-\tilde{\kappa}_i)}-\sum_{i \neq 1}\frac{F^{ii}h_{11i}^2}{\kappa_{1}^2} \\
=&\quad 2\sum_{i>m}\frac{F^{ii}h_{1i1}^2}{\kappa_1(\kappa_1-\tilde{\kappa}_i)}+2\sum_{1<i\leq m} \frac{F^{11}h_{1i1}^2}{\kappa_1(\kappa_1-\tilde{\kappa}_i)}-\sum_{i \neq 1}\frac{F^{ii}h_{11i}^2}{\kappa_{1}^2}\\
= &\quad 2\sum_{i>m}\frac{F^{ii}(h_{11i}+\overline{R}_{01i1})^2}{\kappa_1(\kappa_1-\tilde{\kappa}_i)}-\sum_{i > m}\frac{F^{ii}h_{11i}^2}{\kappa_{1}^2}\\
&\quad 2\sum_{1 \leq i \leq m} \frac{F^{11}(h_{11i}+\overline{R}_{01i1})^2}{\kappa_1(\kappa_1-\tilde{\kappa}_i)}-\sum_{1 < i \leq m} \frac{F^{ii}h_{11i}^2}{\kappa_{1}^2} \\
\geq &\quad 2\varepsilon \sum_{i>m} \frac{F^{ii}h_{11i}^2}{\kappa_1(\kappa_1-\tilde{\kappa}_i)}-\sum_{i>m} \frac{F^{ii}h_{11i}^2}{\kappa_{1}^2}-\frac{2\varepsilon}{1-\varepsilon}\sum_{i>m} \frac{F^{ii}\overline{R}_{01i1}^2}{\kappa_1(\kappa_1-\tilde{\kappa}_i)}\\
&\quad 2\varepsilon \sum_{1<i \leq m} \frac{F^{ii}h_{11i}^2}{\kappa_1(\kappa_1-\tilde{\kappa}_i)}-\sum_{1<i \leq m} \frac{F^{ii}h_{11i}^2}{\kappa_{1}^2}-\frac{2\varepsilon}{1-\varepsilon}\sum_{1<i \leq m} \frac{F^{ii}\overline{R}_{01i1}^2}{\kappa_1(\kappa_1-\tilde{\kappa}_i)} \\
\end{align*} for some $0<\varepsilon<1$ to be determined later.

Since 
\[\frac{1}{\kappa_1-\tilde{\kappa}_i} \leq 1,\] we have
\[-\frac{2\varepsilon}{1-\varepsilon}\sum_{i \neq 1} \frac{F^{ii}\overline{R}_{01i1}^2}{\kappa_1(\kappa_1-\tilde{\kappa}_i)} \geq -\frac{2\varepsilon}{1-\varepsilon} \frac{C}{\kappa_1}\sum_{i=1}^{n} F^{ii}\] for some $C>0$ depending on $\overline{R}$. Next, for $1<i\leq m$, we have
\[2\varepsilon \frac{F^{11}h_{11i}^2}{\kappa_1(\kappa_1-\tilde{\kappa}_i)}-\frac{F^{ii}h_{11i}^2}{\kappa_{1}^2}=(2\varepsilon\kappa_1-1)\frac{F^{ii}h_{11i}^2}{\kappa_{1}^2} \geq 0\] for $\kappa_1$ being sufficiently large; and for $i > m$, we have
\[2\varepsilon \frac{F^{ii}h_{11i}^2}{\kappa_1(\kappa_1-\tilde{\kappa}_i)}-\frac{F^{ii}h_{11i}^2}{\kappa_{1}^2}=\frac{(2\varepsilon-1)\kappa_1+\tilde{\kappa}_i}{\kappa_1-\tilde{\kappa}_i}\frac{F^{ii}h_{11i}^2}{\kappa_{1}^2} \geq 0 \quad \text{if $\kappa_i \geq 0$}.\]

Therefore, we arrive at  
\begin{gather}\label{final inequality}
\begin{split}
0&\geq \left(\frac{Nu}{u-a}-1\right)\sum_{i}F^{ii}\kappa_{i}^2+\left(\alpha \phi' - C - \frac{2\varepsilon}{1-\varepsilon}\frac{C}{\kappa_1}\right)\sum_{i} F^{ii}+kF\kappa_1-C\alpha-C \\
&\quad -\sum_{p\neq q} \frac{F^{pp,qq}h_{pp1}h_{qq1}}{\kappa_1}-\frac{F^{11}h_{111}^2}{\kappa_{1}^2}+2\sum_{i \neq 1}\frac{F^{ii}h_{1ii}^2}{\kappa_1(\kappa_1-\tilde{\kappa}_i)}+N\frac{F^{11}u_{1}^2}{(u-a)^2}\\
&\quad +\sum_{i  \in I} \frac{(2\varepsilon-1)\kappa_1+\tilde{\kappa}_i}{\kappa_1-\tilde{\kappa}_i}\frac{F^{ii}h_{11i}^2}{\kappa_{1}^2}+N\sum_{i=1}^{n}\frac{F^{ii}u_{i}^2}{(u-a)^2}+\frac{\Psi_{11}}{\kappa_1} - \frac{N}{u-a} \sum_{k} \left(\sum_{i} F^{ii}h_{iik}\Phi_k\right),
\end{split}
\end{gather} where
\[I:=\{i>m: \text{$\kappa_i<0$ and $(2\varepsilon-1)\kappa_1+\tilde{\kappa}_i<0$}\}.\]

It is from here we shall continue the derivation based upon different conditions listed in theorem \ref{curvature estimate}.

\section{The Prescribed Curvature Equation in General} \label{C^2 estimates 2}

In this section, we prove theorem \ref{curvature estimate} for the first three cases i.e.
\begin{align}
\text{Case 1:}& \quad k=n-1, n \geq 3 \quad \text{or} \quad k=n-2, n \geq 5 \label{2k>n} \\
\text{Case 2:}& \quad \text{$M$ is semi-convex} \label{semi-convex} \\
\text{Case 3:}& \quad \text{$M$ is ($k+1$)-convex} \label{k+1 convex}
\end{align}

\begin{remark}
Recall from lemma \ref{Li-Ren-Wang} that we do not need to handle Case 3.
\end{remark}

For these cases, we shall employ the test function
\[Q=\log \kappa_{\max} - N\log u + \alpha \Phi.\] By proceeding exactly as in section \ref{C^2 estimates 1}, or simply by substituting $a=0$ in \eqref{final inequality} we obtain that
\begin{gather} \label{final inequality in general 1}
\begin{split}
0&\geq \left(N-1\right)\sum_{i}F^{ii}\kappa_{i}^2+\left(\alpha \phi' - C-\frac{2\varepsilon}{1-\varepsilon}\frac{C}{\kappa_1}\right)\sum_{i} F^{ii}+kF\kappa_1-C\alpha-C \\
&\quad -\sum_{p\neq q} \frac{F^{pp,qq}h_{pp1}h_{qq1}}{\kappa_1}-\frac{F^{11}h_{111}^2}{\kappa_{1}^2}+2\sum_{i \neq 1}\frac{F^{ii}h_{1ii}^2}{\kappa_1(\kappa_1-\tilde{\kappa}_i)}+NF^{11}\frac{u_{1}^2}{u^2}\\
&\quad +\sum_{i  \in I} \frac{(2\varepsilon-1)\kappa_1+\tilde{\kappa}_i}{\kappa_1-\tilde{\kappa}_i}\frac{F^{ii}h_{11i}^2}{\kappa_{1}^2}+N\sum_{i \in I}\frac{F^{ii}u_{i}^2}{u^2}+\frac{\Psi_{11}}{\kappa_1} - \frac{N}{u} \sum_{k} \left(\sum_{i} F^{ii}h_{iik}\Phi_k\right)
\end{split}
\end{gather}

Let us deal with the third line first. For the first term, we claim that 
\begin{claim} \label{claim 1}
Under either conditions \eqref{2k>n} or \eqref{semi-convex}, we have 
\[\sum_{i  \in I} \frac{(2\varepsilon-1)\kappa_1+\tilde{\kappa}_i}{\kappa_1-\tilde{\kappa}_i}\frac{F^{ii}h_{11i}^2}{\kappa_{1}^2} \geq 0\] by choosing appropriate values for $\varepsilon$.
\end{claim}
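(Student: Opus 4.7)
The strategy is to show that, by choosing $\varepsilon \in (0,1)$ appropriately (depending on the case) and invoking ``$\kappa_1$ is sufficiently large'', the index set $I$ is actually empty. Once $I = \emptyset$, the sum is vacuous and the claimed inequality $\geq 0$ holds trivially. Recall that for $i \in I$ we have $i>m$, $\kappa_i < 0$, and $(2\varepsilon-1)\kappa_1 + \tilde{\kappa}_i < 0$, where $\tilde{\kappa}_i = \kappa_i - 1$; also $\kappa_1 - \tilde{\kappa}_i > 0$, so the coefficient $\frac{(2\varepsilon-1)\kappa_1 + \tilde{\kappa}_i}{\kappa_1 - \tilde{\kappa}_i}$ is the only possibly negative factor. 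Hence showing $I = \emptyset$ is equivalent to showing $(2\varepsilon-1)\kappa_1 + \tilde{\kappa}_i \geq 0$ whenever $i > m$ and $\kappa_i < 0$.

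Under condition \eqref{2k>n}, I plan to apply Lemma \ref{negative kappa}: for $\kappa \in \Gamma_k$ with $\kappa_i \leq 0$, we have $-\kappa_i < \tfrac{n-k}{k}\kappa_1$. Consequently
\[
(2\varepsilon-1)\kappa_1 + \tilde{\kappa}_i \;>\; \Bigl((2\varepsilon-1) - \tfrac{n-k}{k}\Bigr)\kappa_1 - 1.
\]
Choosing $\varepsilon$ strictly greater than $\tfrac{n}{2k}$ makes the coefficient of $\kappa_1$ positive. The inequality $\tfrac{n}{2k}<1$ is equivalent to $2k>n$, which holds in both subcases $k=n-1$, $n\geq 3$ and $k=n-2$, $n\geq 5$; thus such an $\varepsilon \in (0,1)$ exists. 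For $\kappa_1$ sufficiently large (in the sense specified in the Notation), the right-hand side is $\geq 0$, contradicting $i \in I$. Hence $I = \emptyset$.

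Under condition \eqref{semi-convex}, the argument is even simpler: the semi-convexity $\kappa_i \geq -\eta$ yields $\tilde{\kappa}_i \geq -\eta - 1$, so
\[
(2\varepsilon-1)\kappa_1 + \tilde{\kappa}_i \;\geq\; (2\varepsilon-1)\kappa_1 - \eta - 1,
\]
and fixing any $\varepsilon \in (\tfrac{1}{2},1)$ together with $\kappa_1$ large enough (depending on $\eta$) forces this quantity to be nonnegative, again giving $I = \emptyset$.

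The main (mild) obstacle is merely bookkeeping: the value of $\varepsilon$ chosen here must still be admissible for the earlier step of the computation in Section \ref{C^2 estimates 1} that required $0<\varepsilon<1$. In Case 1 this amounts to verifying $\tfrac{n}{2k}<1$ as just noted, and in Case 2 we simply take $\varepsilon$ close enough to $1$. With $\varepsilon$ thus fixed, the condition defining $I$ fails for every $i>m$ with $\kappa_i<0$, so $I = \emptyset$ and the claim follows.
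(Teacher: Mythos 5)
Your proof is correct and follows essentially the same route as the paper: in both cases the argument reduces to showing the numerator $(2\varepsilon-1)\kappa_1+\tilde{\kappa}_i$ is nonnegative for $i>m$ with $\kappa_i<0$, via Lemma \ref{negative kappa} when $2k>n$ (taking $\varepsilon\in(\tfrac{n}{2k},1)$) and via the semi-convexity bound $\kappa_i\geq-\eta$ (taking $\varepsilon\in(\tfrac{1}{2},1)$). Your framing that the index set $I$ is then empty, and your explicit invocation of ``$\kappa_1$ sufficiently large,'' are just cleaner ways of stating what the paper's proof leaves implicit.
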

\begin{proof}[Proof of the Claim]
It suffices to prove the numerator of the quotient 
\[\frac{\kappa_1+\tilde{\kappa}_i}{\kappa_1-\tilde{\kappa}_i}\] is positive when $\kappa_i<0$.

If $k$ and $n$ satisfy condition \eqref{2k>n}, then by lemma \ref{negative kappa} we have that 
\[(2\varepsilon-1)\kappa_1+\tilde{\kappa}_i=\kappa_1+\kappa_i-1>(2\varepsilon-1)\kappa_1-\frac{n-k}{k}\kappa_1-1=\left(2\varepsilon-\frac{n}{k}\right)\kappa_1-1>0\] by choosing
\[\frac{n}{2k} < \varepsilon(n,k) < 1;\] note that since $2k>n$, the inequality is valid.

If $M$ is semi-convex i.e. $\kappa_i \geq -\eta$, then
\[(2\varepsilon-1)\kappa_1+\tilde{\kappa}_i \geq (2\varepsilon-1)\kappa_1-\eta-1 \geq 0\] by choosing any
\[\frac{1}{2}<\varepsilon<1.\]
\end{proof}

With this claim and by throwing away excessive positive terms, we have that 
\begin{gather} \label{final inequality in general 2}
\begin{split}
0&\geq \left(N-1\right)\sum_{i}F^{ii}\kappa_{i}^2+(\alpha \phi' - C)\sum_{i} F^{ii}-C\alpha-C \\
&\quad -\sum_{p\neq q} \frac{F^{pp,qq}h_{pp1}h_{qq1}}{\kappa_1}-\frac{F^{11}h_{111}^2}{\kappa_{1}^2}+2\sum_{i \neq 1}\frac{F^{ii}h_{1ii}^2}{\kappa_1(\kappa_1-\tilde{\kappa}_i)}\\
&\quad +\frac{\Psi_{11}}{\kappa_1} - \frac{N}{u} \sum_{k} \left(\sum_{i} F^{ii} h_{iik}\Phi_k\right)
\end{split}
\end{gather}

Next, we estimate the remaining two terms in the third line. 
\begin{claim} \label{remaining two terms}
\[\frac{\Psi_{11}}{\kappa_1} - \frac{N}{u} \sum_{k} \left(\sum_{i} F^{ii} h_{iik}\Phi_k\right) \geq -C\kappa_1-C\alpha-CN-C\] for some $C>0$ depending on $\overline{R}, \norm{\Psi}_{C^2}, \norm{\Phi}_{C^1}$ and $\norm{\Sigma}_{C^1}$.
\end{claim}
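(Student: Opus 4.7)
The expression to be bounded below naturally splits into two pieces: a second derivative of $\Psi$ divided by $\kappa_1$, and a weighted sum of the third order quantities $\sum_i F^{ii}h_{iik}$. My plan is to handle each piece by converting it into first order data using the differentiated curvature equation and the first order critical point condition \eqref{1st critical 1}, then to estimate everything with the $C^1$ bounds on $\Sigma$, $\Phi$, $\Psi$ and the negative curvature bound from lemma \ref{negative kappa}.

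\textbf{The second sum.} Apply the once-differentiated curvature equation \eqref{differentiate once} to identify
\[
\sum_i F^{ii}h_{iik}=\Psi_V(\nabla_{E_k}V)+\kappa_k\,\Psi_\nu(E_k),
\]
where the diagonal form $h_{ij}=\kappa_i\delta_{ij}$ has been used. The first summand is uniformly bounded since $\|V\|_{C^1}$ is controlled by $\|\Sigma\|_{C^1}$ and the warping data, while $|\Psi_V|,|\Psi_\nu|\leq \|\Psi\|_{C^1}$. Multiplying by $\Phi_k$ (bounded by $\|\Phi\|_{C^1}$), summing over $k$, and using the lower bound $u\geq cr_1>0$ from the $C^1$ hypothesis together with lemma \ref{negative kappa} to get $|\kappa_k|\leq C\kappa_1$, yields a lower bound of the form $-CN-CN\kappa_1$, i.e. linear in $\kappa_1$ with constants absorbing $N$.

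\textbf{The term $\Psi_{11}/\kappa_1$.} Differentiate $\Psi(V,\nu)$ twice by the chain rule. Since $V_1:=\nabla_{E_1}V$ is uniformly bounded and the Weingarten relation gives $\nu_1:=\nabla_{E_1}\nu=h_{1l}E_l=\kappa_1 E_1$ in the diagonal frame, the purely algebraic terms
\[
\Psi_{VV}(V_1,V_1)+2\Psi_{V\nu}(V_1,\nu_1)+\Psi_{\nu\nu}(\nu_1,\nu_1)+\Psi_V(\nabla_1V_1)+h_{1l}\Psi_\nu(\nabla_1 E_l)
\]
are of size at most $C\kappa_1^2$ (the worst contribution being $\kappa_1^2\Psi_{\nu\nu}(E_1,E_1)$). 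Dividing by $\kappa_1$ contributes $-C\kappa_1-C$. The only remaining piece is the genuinely third order term $h_{11l}\Psi_\nu(E_l)$ coming from $\Psi_\nu(\nabla_1\nu_1)$. For this I invoke the first order critical equation \eqref{1st critical 1} (with $a=0$), which reads $h_{11l}/\kappa_1=N u_l/u-\alpha\Phi_l$. Using $u_l=\kappa_l\Phi_l$ from lemma \ref{geometric formulas} and the bound $|\kappa_l|\leq C\kappa_1$, this gives $|h_{11l}|/\kappa_1\leq CN\kappa_1+C\alpha$, hence $\Psi_\nu(E_l)h_{11l}/\kappa_1\geq -CN\kappa_1-C\alpha$.

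\textbf{Assembly and main obstacle.} Adding the two estimates produces a lower bound of the required shape $-C\kappa_1-C\alpha-CN-C$, with the understanding that $C$ here is allowed to incorporate its $N$-dependence (since in the subsequent use of the claim $N$ will be fixed). The main obstacle is the third order summand $h_{11l}\Psi_\nu(E_l)$ inside $\Psi_{11}$: its size cannot be controlled by purely algebraic $C^2$ data of $\Psi$, and the only handle on it is the critical point equation, which inevitably forces the appearance of the parameters $N$ and $\alpha$ in the final bound. Every other piece is handled by routine $C^1$/$C^2$ bookkeeping together with lemma \ref{negative kappa}.
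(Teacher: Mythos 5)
There is a genuine gap. Your bookkeeping produces a term of size $-CN\kappa_1$ (you get it twice: once from $N\frac{u_1}{u}\Psi_\nu(E_1)$ via the critical equation \eqref{1st critical 1}, and once from $-\frac{N}{u}\kappa_k\Psi_\nu(E_k)\Phi_k$ via \eqref{differentiate once}), and you propose to absorb the $N$-dependence into $C$ ``since in the subsequent use $N$ will be fixed.'' That is not how $N$ is used downstream. After inserting this claim into \eqref{final inequality in general 3}, the argument concludes with an inequality of the form
\[
0 \geq (N-C)\,\tfrac{k}{n}\,\kappa_1\sigma_k - C\kappa_1 - C,
\]
and the curvature bound is obtained by taking $N$ large at the very end. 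If the $\kappa_1$-coefficient were actually $-C'N\kappa_1$, the right-hand side would become $N\kappa_1\left(\tfrac{k}{n}\sigma_k - C'\right) - C\tfrac{k}{n}\kappa_1\sigma_k - C$, and for this to be positive after sending $N\to\infty$ one would need $\tfrac{k}{n}\inf\Psi > C'$, which is not available: $C'$ depends on $\norm{\Psi}_{C^1}$, $\norm{\Sigma}_{C^1}$, $\overline R$, and nothing forces $\inf\Psi$ to dominate these. So the $N$-dependence in the $\kappa_1$-coefficient is fatal, not cosmetic.

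The paper's proof avoids this precisely by \emph{not} estimating the two dangerous pieces separately. Write the third-order contribution of $\Psi_{11}/\kappa_1$ as $\sum_k \tfrac{h_{11k}}{\kappa_1}\,d_\nu\Psi(E_k)$, substitute $\tfrac{h_{11k}}{\kappa_1}=N\tfrac{u_k}{u}-\alpha\Phi_k$ from \eqref{1st critical 1}, use $u_k=h_{kk}\Phi_k$ from lemma \ref{geometric formulas}, and substitute $\sum_i F^{ii}h_{iik}=h_{kk}\,d_\nu\Psi(E_k)+d_V\Psi(\nabla_{E_k}V)$ into the second term. Then
\[
N\sum_k\frac{h_{kk}\Phi_k}{u}\,d_\nu\Psi(E_k)
\ \ \text{and}\ \
-\,N\sum_k h_{kk}\,d_\nu\Psi(E_k)\frac{\Phi_k}{u}
\]
cancel \emph{exactly}, leaving only $-\alpha\sum_k \Phi_k\,d_\nu\Psi(E_k) - N\sum_k d_V\Psi(\nabla_{E_k}V)\tfrac{\Phi_k}{u}\geq -C\alpha-CN$, with $C$ independent of $N$. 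This exact cancellation — not a rough bound $|\kappa_k|\leq C\kappa_1$ — is the entire content of the claim, and your proof misses it. Incidentally, the $\overline R_{01k1}$ discrepancy between $h_{1k1}$ and $h_{11k}$ (since the second fundamental form is not Codazzi here) should also be recorded explicitly when expanding $d_\nu\Psi(\nabla^2_{E_1,E_1}\nu)$, though that only contributes to the harmless $-C$ term.
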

\begin{proof}[Proof of Claim \ref{remaining two terms}]
The term $\Phi_{11}$ can have the following rough estimate:
\begin{align*}
\Phi_{11}&=d_{V}^2\Psi(\nabla_{E_1}V,\nabla_{E_1}V)+d_{V}\Psi(\nabla_{E_1,E_1}^{2}V)\\
&\quad + 2d_{V}d_{\nu} \Psi(\nabla_{E_1}V,\nabla_{E_1}V)+d_{\nu}^2\Psi(\nabla_{E_1}\nu,\nabla_{E_1}\nu) \\
&\quad + d_{\nu}\Psi(\nabla_{E_1,E_1}^2\nu) \\
&\geq \sum_{k} h_{1k1}(d_{\nu}\Psi)(E_k)-Ch_{11}^2-Ch_{11}-C \\
&=\sum_{k} (h_{11k}+\overline{R}_{01k1})(d_{\nu}\Psi)(E_k)-Ch_{11}^2-Ch_{11}-C \\
&\geq \sum_{k} h_{11k}(d_{\nu}\Psi)(E_k)-C\kappa_{1}^2-C\kappa_1-C
\end{align*} for some $C>0$ depending on $\overline{R}$ and $\norm{\Psi}_{C^2}$. Note that
\begin{align*}
&\ \sum_{k} \frac{h_{11k}}{\kappa_1}(d_{\nu}\Psi)(E_k)- \frac{N}{u} \sum_{k} \left(\sum_{i} F^{ii}h_{iik}\Phi_k\right) \\
=&\ \sum_{k} \frac{h_{11k}}{\kappa_1}(d_{\nu}\Psi)(E_k) - N \sum_{k} F_k\frac{\Phi_k}{u} \\
=&\ \sum_{k} \left(N\frac{u_k}{u}-\alpha\Phi_k\right)(d_{\nu}\Psi)(E_k)-N\sum_k \left[h_{kk}(d_{\nu}\Psi)(E_k)+(d_{V}\Psi)(\nabla_{E_k}V)\right]\frac{\Phi_k}{u}\\
=&\ N\sum_{k}\frac{h_{kk}\Phi_k}{u}(d_{\nu}\Psi)(E_k)-N\sum_{k} h_{kk}(d_{\nu}\Psi)(E_k)\frac{\Phi_k}{u} \\
&\ -\alpha\sum_{k}d_{\nu}(\Psi)(E_k)-N\sum_{k} (d_{V}\Psi)(\nabla_{E_k}V)\frac{\Phi_k}{u}\\
\geq &\ -C\alpha-CN,
\end{align*} where the second equality is due to \eqref{1st critical 1} and \eqref{differentiate once}, and the third equality is due to lemma \ref{geometric formulas}.

The result follows.
\end{proof}

With this claim, it follows that
\begin{gather} \label{final inequality in general 3}
\begin{split}
0&\geq (N-1)\sum_{i}F^{ii}\kappa_{i}^2+(\alpha \phi' - C)\sum_{i} F^{ii}-C(\kappa_1+\alpha+N+1) \\
&\quad -\sum_{p\neq q} \frac{F^{pp,qq}h_{pp1}h_{qq1}}{\kappa_1}-\frac{F^{11}h_{111}^2}{\kappa_{1}^2}+2\sum_{i \neq 1}\frac{F^{ii}h_{1ii}^2}{\kappa_1(\kappa_1-\tilde{\kappa}_i)}\\
\end{split}
\end{gather}

Now we can proceed to eliminate the second line by applying the concavity inequalities due to Ren-Wang and Lu i.e. lemma \ref{Ren-Wang} and lemma \ref{Lu}.

\subsection{When $k=n-1, n \geq 3$ or $k=n-2, n \geq 5$}

\indent

In this case, by \eqref{Codazzi} and \eqref{HMW}, we have for some $0<\varepsilon<1$ that
\begin{align*}
& -\sum_{p\neq q} \frac{F^{pp,qq}h_{pp1}h_{qq1}}{\kappa_1}-\frac{F^{11}h_{111}^2}{\kappa_{1}^2}+2\sum_{i \neq 1}\frac{F^{ii}h_{1ii}^2}{\kappa_1(\kappa_1-\tilde{\kappa}_i)}\\
=& [\varepsilon+(1-\varepsilon)]\left(-\sum_{p\neq q} \frac{F^{pp,qq}h_{pp1}h_{qq1}}{\kappa_1}-\frac{F^{11}h_{111}^2}{\kappa_{1}^2}\right)+2\sum_{i \neq 1}\frac{F^{ii}(h_{ii1}+\overline{R}_{0i1i})^2}{\kappa_1(\kappa_1-\tilde{\kappa}_i)}\\
= & \varepsilon \left(-\sum_{p\neq q} \frac{F^{pp,qq}h_{pp1}h_{qq1}}{\kappa_1}-\frac{F^{11}h_{111}^2}{\kappa_{1}^2}\right)+2\sum_{i \neq 1}\frac{F^{ii}(h_{ii1}+\overline{R}_{0i1i})^2}{\kappa_1(\kappa_1-\tilde{\kappa}_i)}\\
& -(1-\varepsilon)\sum_{p \neq q} \frac{F^{pp,qq}h_{pp1}h_{qq1}}{\kappa_1}-(1-\varepsilon)\frac{F^{11}h_{111}^2}{\kappa_{1}^2}\\
\geq &-\frac{\beta \varepsilon }{\kappa_1}\left(\sum_{i=1}^{n} F^{ii}h_{ii1}\right)^2-2 \varepsilon \sum_{i \neq 1} \frac{F^{ii}h_{ii1}^2}{\kappa_1(\kappa_1-\tilde{\kappa}_i)} + 2\varepsilon\sum_{i \neq 1} \frac{F^{ii}h_{ii1}^2}{\kappa_1(\kappa_1-\tilde{\kappa}_i)}-\frac{2\varepsilon}{1-\varepsilon}\sum_{i \neq 1} \frac{F^{ii}\overline{R}_{0i1i}^2}{\kappa_1(\kappa_1-\tilde{\kappa}_i)} \\
&-(1-\varepsilon)\sum_{p \neq q} \frac{F^{pp,qq}h_{pp1}h_{qq1}}{\kappa_1}-(1-\varepsilon)\frac{F^{11}h_{111}^2}{\kappa_{1}^2},\\
\end{align*} where in the last inequality we have applied lemma \ref{Ren-Wang} to $\xi_i=h_{ii1}$ and divide by $\kappa_{1}^2$. Cancelling out the sums for $F^{ii}h_{ii1}^2$ and invoking the concavity of our operator $F=\sigma_k$ with \eqref{differentiate once} i.e. 
\[-\sum F^{pp,qq}h_{pp1}h_{qq1} \geq -\frac{k-1}{k}\frac{\left(\sum_{i=1}^{n} F^{ii}h_{ii1}\right)^2}{F}\] and 
\[\sum_{i=1}^{n} F^{ii}h_{ii1}=\phi'd_{V}\Psi(E_1)+h_{11}\Psi_{\nu}(E_1),\]
we obtain that
\begin{align*}
& -\sum_{p\neq q} \frac{F^{pp,qq}h_{pp1}h_{qq1}}{\kappa_1}-\frac{F^{11}h_{111}^2}{\kappa_{1}^2}+2\sum_{i \neq 1}\frac{F^{ii}h_{1ii}^2}{\kappa_1(\kappa_1-\tilde{\kappa}_i)}\\
=& -\frac{\beta \varepsilon }{\kappa_1}\left(\sum_{i=1}^{n} F^{ii}h_{ii1}\right)^2-(1-\varepsilon)\sum_{p \neq q} \frac{F^{pp,qq}h_{pp1}h_{qq1}}{\kappa_1} \\
&-\frac{2\varepsilon}{1-\varepsilon}\frac{C}{\kappa_1}\sum_{i=1}^{n} F^{ii} -(1-\varepsilon)\frac{F^{11}h_{111}^2}{\kappa_{1}^2} \\
\geq & -C\kappa_1-\frac{2\varepsilon}{1-\varepsilon}\frac{C}{\kappa_1}\sum_{i=1}^{n} F^{ii} -(1-\varepsilon)\frac{F^{11}h_{111}^2}{\kappa_{1}^2}.
\end{align*}

Thus, \eqref{final inequality in general 3} becomes:
\[0\geq \left(N-1\right)\sum_{i}F^{ii}\kappa_{i}^2+(\alpha \phi' - C)\sum_{i} F^{ii}-(1-\varepsilon)\frac{F^{11}h_{111}^2}{\kappa_{1}^2}-C(\kappa_1+\alpha+N+1).\]
For the remaining third order term, we use the first order critical equation \eqref{1st critical 1} to get that
\[(1-\varepsilon)\frac{F^{11}h_{111}^2}{\kappa_{1}^2} \leq (1-\varepsilon)F^{11} \cdot C(N^2\kappa_{1}^2+\alpha^2).\] Finally, choosing $\varepsilon>0$ very close to $1$ so that the inequality becomes
\[0 \geq (N-C)\sum F^{ii}\kappa_{i}^2 + (\alpha \phi' - C) \sum F^{ii} - C\kappa_1-C.\] Moreover, since on $\Sigma$, $\phi'$ is bounded from below by a positive controlled constant and so by choosing $\alpha$ large enough, we may assume that the coefficient $\alpha\phi'-C \geq 0$ and hence the second term is non-negative. By lemma \ref{kappa squared times f_i}, the inequality reduces to 
\[0 \geq (N-C)\frac{k}{n}\kappa_1\sigma_k - C\kappa_1 - C.\] Choosing $N$ large enough yields the estimate.

\begin{remark}
One should be able to realize that the proof of theorem \ref{theorem 2k>n} relies largely on the validity of Ren-Wang's concavity inequality. In particular, if that inequality holds for other values of $k$, then through an exact same proof, the theorem would hold for those values of $k$ as well. In fact, Ren-Wang has conjectured that their concavity inequality would hold for all $k$ satisfying $2k>n$. Therefore, if one can perform some tedious computation and prove that the inequality indeed holds for $2k>n$, then the theorem would hold for $k$-convex solutions with $2k>n$.

In our earliest draft, we proved the theorem for $2k>n$ by assuming Ren-Wang's conjecture is true. Later, we changed it to the current form i.e. only for $k=n-2$ and $k=n-1$, because only these two cases have been verified by Ren-Wang \cite{Ren-Wang-1, Ren-Wang-2}.
\end{remark}
\subsection{When $\Sigma$ is semi-convex or ($k+1$)-convex} \label{semi-convex section}

\indent

For cases 2 and 3, this time we do not have a concavity inequality as handy as that of Ren-Wang to handle the second line of \eqref{final inequality in general 3}. However, we could overcome this difficulty by employing Lu's concavity inequality \eqref{Lu's inequality} which is weaker but more generic, along with an iteration argument which is inspired by Guan-Ren-Wang \cite{Guan-Ren-Wang} and Yang \cite{Yang}.

\begin{remark}
Recall that according to lemma \ref{Li-Ren-Wang}, it is sufficient to prove the semi-convex case only.
\end{remark}

Let $\varepsilon>0$ be some small number which will be chosen later and let $\delta_0=\frac{1}{2}$. Pick an arbitrary $\delta_1 \in (0,1)$, say $\delta_1=\frac{1}{3}$, then trivially we have $\kappa_1 \geq \delta_1 \kappa_1$. Now by lemma \ref{Lu}, there exists some $\delta_2>0$ and for this $\delta_2$, if $\kappa_2 \leq \delta_{2} \kappa_1$, then Lu's inequality \eqref{Lu's inequality} holds for $l=1$. 

We will soon deal with the situation in which Lu's inequality holds; here the key argument is that if the reverse happens i.e. $\kappa_2>\delta_2 \kappa_1$, then we continue to pick some $\delta_3>0$ by lemma \ref{Lu} and for this $\delta_3>0$, we do the same analysis as above: either the process stops here and we have Lu's inequality to be analyzed, or we have $\kappa_3>\delta_3 \kappa_1$ and the process goes on.

Suppose that the process goes on until $\kappa_k>\delta_k \kappa_1$ i.e. we have $\kappa_i>\delta_i\kappa_1$ for $1 \leq i \leq k$. Then we immediately have the following by assuming $\kappa_1$ is sufficiently large:
\begin{align*}
\sigma_k(\kappa)&\geq \kappa_1\cdots \kappa_k - C_{n,k} \cdot \kappa_1\cdots \kappa_{k-1} \cdot \eta \\
&=\kappa_1\cdots\kappa_{k-1}(\kappa_k-C\eta) \\
&\geq \kappa_1 \cdot (\delta_2\kappa_1)\cdots (\delta_{k-1}\kappa_1)\cdot (\delta_k \kappa_1-C\eta) \\
&\geq C\delta_2\cdots \delta_k \kappa_{1}^{k} \quad \text{if $\Sigma$ is semi-convex i.e. $\kappa_i \geq -\eta$.}
\end{align*} which implies the desired bound for $\kappa_1$.

Now suppose that the process stops at some index $m \leq l < k$ and then Lu's inequality \eqref{Lu's inequality} holds for $l$:
\[-\sum_{p \neq q} \frac{\sigma_{k}^{pp,qq}\xi_{p}\xi_{q}}{\sigma_k}+\frac{\left(\sum_{i}\sigma_{k}^{ii}\xi_i\right)^2}{\sigma_{k}^2} \geq (1-\varepsilon)\frac{\xi_{1}^2}{\kappa_{1}^2}-\frac{1}{2}\sum_{i>l}\frac{\sigma_{k}^{ii}\xi_{i}^2}{\kappa_1\sigma_{k}}.\]

Taking $F=\sigma_k$, $\xi_i=h_{ii1}$ and multiply the inequality by $\frac{\sigma_k}{\kappa_1}$, we have that 
\[-\sum_{p \neq q} \frac{F^{pp,qq}h_{pp1}h_{qq1}}{\kappa_1} \geq (1-\varepsilon)\frac{h_{111}^2}{\kappa_{1}^3}\sigma_k-\frac{1}{2}\sum_{i>l}\frac{F^{ii}h_{ii1}^2}{\kappa_{1}^2}-\frac{F_{1}^2}{\kappa_1\sigma_k}.\]

The second line of \eqref{final inequality in general 3} can now be estimated as follows:
\begin{gather} \label{the second line by Lu}
\begin{split}
&-\sum_{p\neq q} \frac{F^{pp,qq}h_{pp1}h_{qq1}}{\kappa_1}-\frac{F^{11}h_{111}^2}{\kappa_{1}^2}+2\sum_{i \neq 1}\frac{F^{ii}h_{1ii}^2}{\kappa_1(\kappa_1-\tilde{\kappa}_i)}\\
\geq & - \frac{F_{1}^2}{\kappa_1\sigma_k}+\frac{h_{111}^2}{\kappa_{1}^2}\left(\frac{1-\varepsilon}{\kappa_1}\sigma_k-\sigma_{k}^{11}\right)+\sum_{i>l}\left[\frac{3/2}{\kappa_1(\kappa_1-\tilde{\kappa}_i)}-\frac{1}{2\kappa_{1}^2}\right]F^{ii}h_{ii1}^2-\frac{C_{\overline{R}}}{\kappa_1}\sum_{i=1}^{n} F^{ii}\\
\geq & -C\kappa_1+(1-\varepsilon)\sigma_{k}(\kappa|1)\frac{h_{111}^2}{\kappa_{1}^3}-\varepsilon\frac{F^{11}h_{111}^2}{\kappa_{1}^2}-\frac{C_{\overline{R}}}{\kappa_1}\sum_{i=1}^{n} F^{ii}
\end{split}
\end{gather}
where we have applied \eqref{Codazzi} and \eqref{HMW}, and 
used
\[\sigma_k(\kappa)=\kappa_1\sigma_{k-1}(\kappa|1)+\sigma_k(\kappa|1)\] and 
\[\frac{3/2}{\kappa_1(\kappa_1-\tilde{\kappa}_i)}-\frac{1}{2\kappa_{1}^2}=\frac{2\kappa_1+\tilde{\kappa}_i}{2\kappa_{1}^2(\kappa_1-\tilde{\kappa}_i)} \geq 0 \quad \text{by semi-convexity}.\]

Substituting \eqref{the second line by Lu} into \eqref{final inequality in general 3}, we obtain that
\begin{gather} \label{final inequality for semi-convex}
\begin{split}
0&\geq (N-1)\sum_{i} \kappa_{i}^2 F^{ii} -\varepsilon \frac{F^{11}h_{111}^2}{\kappa_{1}^2} \\
&\quad +(\alpha\phi'- C)\sum_{i} F^{ii} + (1-\varepsilon)\sigma_{k}(\kappa|1)\frac{h_{111}^2}{\kappa_{1}^3} \\
&\quad -C\kappa_1-C\alpha-CN-C.
\end{split}
\end{gather} We proceed to show how to deal with the terms involving $\varepsilon$.
By the critical equation \eqref{1st critical 1}, we have that 
\begin{align*}
-\varepsilon \frac{F^{11}h_{111}^2}{\kappa_{1}^2}&=-\varepsilon F^{11}\left(N\frac{u_1}{u}-\alpha \Phi_1\right)^2 \\
&\geq - C\varepsilon N^2F^{11}\kappa_{1}^2-C \varepsilon \alpha^2 F^{11} \\
&\geq - C\varepsilon N^2 F^{11}\kappa_{1}^2
\end{align*} and 
\begin{align*}
(1-\varepsilon)\sigma_{k}(\kappa|1)\frac{h_{111}^2}{\kappa_{1}^3}&\geq -C\kappa_2 \cdots \kappa_k \cdot \eta \cdot \frac{1}{\kappa_1}\cdot \left(N\frac{u_1}{u}-\alpha\Phi_1\right)^2 \\
&\geq -CN^2\kappa_1\cdots\kappa_k-C\alpha^2\frac{\kappa_2\cdots\kappa_k}{\kappa_1} \\
&\geq -CN^2\kappa_1\cdots\kappa_k  \quad \text{if $\kappa_i \geq -\eta$.}
\end{align*} 

Recall that on $\Sigma$, $\phi'$ is bounded from below by a positive controlled constant and so by choosing $\alpha$ large enough, we may assume that the coefficient $\alpha\phi'- C$ is positive. Therefore, by lemma \ref{sigma_k formulas}, lemma \ref{sigma_l} and choosing $\alpha$ large enough, we have that
\begin{align*}
&\ (\alpha\phi'-C)\sum_{i} F^{ii} + (1-\varepsilon)\sigma_{k}(\kappa|1)\frac{h_{111}^2}{\kappa_{1}^3} \\
=&\ (\alpha\phi'-C)(n-k+1)\sigma_{k-1}+(1-\varepsilon)\sigma_{k}(\kappa|1)\frac{h_{111}^2}{\kappa_{1}^3}\\
\geq &\ C\alpha\kappa_1\cdots\kappa_{k-1}-CN^2\kappa_1\cdots\kappa_k \\
=&\ (\kappa_1\cdots\kappa_{k-1})\cdot (C\alpha-CN^2\kappa_k)
\end{align*}

If $C\alpha-CN^2\kappa_k\geq 0$, then we are done dealing with this term. Otherwise, we would have $\kappa_k \geq CN$ by choosing $\alpha=N^3$, which will then imply that 
\begin{align*}
\sigma_k(\kappa)&\geq \kappa_1\cdots\kappa_k-C\kappa_1\cdots\kappa_{k-1}\cdot \eta \\
&= \kappa_1\cdots \kappa_{k-1}(\kappa_k-C\eta) \\
&\geq \kappa_1\cdots \kappa_{k-1}(CN-C\eta)\\
&\geq C\kappa_1
\end{align*} by choosing $N$ sufficiently large. Therefore, we may assume that $C\alpha-CN^2\kappa_k\geq 0$ and remove the second line of \eqref{final inequality for semi-convex} to have that 
\begin{align*}
0&\geq (N-1)\sum_{i} \kappa_{i}^2 F^{ii} -\varepsilon \frac{F^{11}h_{111}^2}{\kappa_{1}^2}-C\kappa_1-C\alpha-CN-C. \\
&\geq (N-1)\sum_{i} \kappa_{i}^2 F^{ii} - C\varepsilon N^2 F^{11}\kappa_{1}^2-C\kappa_1-C\alpha-CN-C\\
&\geq (N-C)\sum_i \kappa_{i}^2 F^{ii}-C\kappa_1-C\alpha-CN-C\\
&\geq CN\kappa_1\sigma_k-C\kappa_1-C\alpha-CN-C\\
&\geq (CN-C)\kappa_1-C
\end{align*} where we have chosen $\varepsilon=\frac{1}{N^2}$ and applied lemma \ref{kappa squared times f_i}. The desired estimate for $\kappa_1$ follows by choosing $N$ large enough.

\section{The Prescribed Curvature Measure Type Equation} \label{C^2 estimates 3}
In this section, we prove theorem \ref{prescribed curvature measure} by deriving $C^0, C^1$ and $C^2$ estimates for the prescribed curvature measure type equation
\[\sigma_k(\kappa[\Sigma])=\langle X,\nu\rangle^p \psi(X).\]
\subsection{The $C^0$ Estimate}
\begin{lemma}
Let (i) $1 \leq k \leq n-1, p \in (-\infty,0)\cup (0,1]$, or (ii) $k=n, p \in (-\infty,0)\cup (0,1)$.
If $\Sigma=\{(r(z),z): z \in M\}$ is a closed $k$-convex hypersurface in $\overline{M}$ which is strictly star-shaped with respect to the origin and satisfies \eqref{curvature measure equation} for some positive $\psi \in C^2(\Sigma)$, then there exist $C_1,C_2>0$ depending only on $n,k, \min_{M} \psi, \max_{M} \psi$ such that
\[C_1 \leq \min r \leq \max r \leq C_2.\]

For $k=n$ and $p=1$, the same result holds under the additional assumption that $\min_{M}\psi>1$.
\end{lemma}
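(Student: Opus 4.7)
The plan is a direct maximum-principle estimate for $r$ regarded as a function on $M$.

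At a maximum point $z_0\in M$ of $r$, the first-order condition $\nabla' r(z_0)=0$ and the Hessian inequality $\nabla'^2 r(z_0)\leq 0$ make the geometric data of Section~\ref{preliminaries} collapse: $g_{ij}=\phi(r_{\max})^2\delta_{ij}$, $\nu=\overline e_0$, $u=\phi(r_{\max})$, and $h_{ij}=-r_{ij}(z_0)+\phi(r_{\max})\phi'(r_{\max})\delta_{ij}$. Diagonalizing the nonpositive Hessian of $r$ shows that every principal curvature satisfies $\kappa_i\geq\phi'(r_{\max})/\phi(r_{\max})$, whence by monotonicity of $\sigma_k$ on $\Gamma_k$ the equation $\sigma_k=u^p\psi$ produces the scalar inequality
\[
\binom{n}{k}\,\phi'(r_{\max})^k \;\leq\; \phi(r_{\max})^{k+p}\,\max_M\psi, \qquad (\ast)
\]
and the symmetric computation at a minimum of $r$ (where $\nabla'^2 r\geq 0$, hence $\kappa_i\leq\phi'/\phi$) yields
\[
\binom{n}{k}\,\phi'(r_{\min})^k \;\geq\; \phi(r_{\min})^{k+p}\,\min_M\psi. \qquad (\ast\ast)
\]

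The remainder of the proof is a case-by-case analysis of the scalar function $G(r):=\phi'(r)^k/\phi(r)^{k+p}$, aided by comparison with coordinate spheres $\{r=R\}$ acting as sub- and super-solutions. Under the standing assumptions $\phi,\phi'>0$, the asymptotic behavior of $G$ at the endpoints of $I$ is determined by the sign of $k+p$ and by the specific form of $\phi$; for each admissible subcase of $(k,p)$ listed in (i) and (ii), a straightforward asymptotic analysis converts $(\ast)$ and $(\ast\ast)$ into the pointwise bounds $C_1\leq r(z)\leq C_2$. When a max-principle inequality in a given direction is not by itself strong enough, a barrier argument with a coordinate sphere of explicitly computable radius closes the gap.

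The main obstacle is the borderline case $k=n$, $p=1$, for which $G(r)=\phi'(r)^n/\phi(r)^{n+1}$ decreases strictly from $+\infty$ at the left end of $I$ to $0$ at the right end (as in the hyperbolic model $\phi=\sinh r$); in this situation $(\ast)$ and $(\ast\ast)$ give information only in the wrong direction. The extra hypothesis $\min_M\psi>1$ is precisely the strict gap needed to produce a sub/super-solution sphere at which a genuine barrier comparison can be performed, supplying the missing estimate and explaining the separate condition on $\psi$ in the statement.
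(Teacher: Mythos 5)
Your extremal-point computation is the same mechanism the paper uses: at a maximum of $r$ one has $\nabla' r=0$, $\nabla'^2 r\leq 0$, hence $h^i_j\geq(\phi'/\phi)\delta_{ij}$, and then monotonicity of $\sigma_k$ gives a scalar inequality (symmetrically at the minimum). However, your inequalities $(\ast),(\ast\ast)$ are missing a crucial Jacobian factor, and as written they run in the wrong direction for most of the admissible range of $p$. The paper first rewrites the equation as $\sigma_k(h^i_j)=\langle X,\nu\rangle^p\,\overline\psi$ with $\overline\psi=\phi^{-(n+1)}\psi$ (the normalization of Guan--Lin--Ma (2.12)), so the controlling function that appears after the extremal argument is $H(r):=\phi'(r)^k\,\phi(r)^{\,n+1-p-k}$, not your $G(r)=\phi'(r)^k/\phi(r)^{k+p}$. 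This matters: in $\bR^{n+1}$ with $\phi(r)=r$ one has $G(r)=r^{-(k+p)}$, which is \emph{decreasing} whenever $k+p>0$ (so in particular for every $p>0$), and then your $(\ast)$, i.e.\ $\binom{n}{k}G(r_{\max})\leq\max\psi$, only yields a \emph{lower} bound on $r_{\max}$, and $(\ast\ast)$ only an \emph{upper} bound on $r_{\min}$ --- not the estimate. By contrast $H(r)=r^{\,n+1-p-k}$ is increasing precisely when $p<n+1-k$, which covers the whole claimed range with the single exception $k=n,\ p=1$, exactly as the lemma states. So the exceptional case is not a borderline artifact of your $G$: the monotonicity failure is pervasive for your normalization, and the case-by-case analysis you sketch would not close for $p>0$.

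Two further points of divergence from the paper. First, the paper's proof consists entirely of the two-sided extremal argument plus the observation that $H$ degenerates when $k=n, p=1$ (where $H\equiv(\phi')^n$ in the Euclidean model and $\min\psi>1$ is what rescues the lower bound); it does not introduce coordinate-sphere barriers or sub/super-solution comparisons, so that entire second half of your plan is extraneous and, as it stands, only asserted rather than carried out. Second, a small technical caveat worth flagging: at the maximum you only know $h^i_j\geq(\phi'/\phi)\delta$ as a symmetric matrix, and the inequality $\sigma_k(h^i_j)\geq\sigma_k((\phi'/\phi)\delta)$ requires the monotonicity of $\sigma_k$ along the segment joining these two matrices inside $\Gamma_k$; this is fine here because $(\phi'/\phi)\delta$ is in the positive cone and $h^i_j\geq(\phi'/\phi)\delta>0$, but it should be said rather than tacitly invoked.
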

\begin{proof}
Our proof follows that of Guan-Lin-Ma \cite[Lemma 2.2]{Guan-Lin-Ma} and Yang \cite[Theorem 3.2]{Yang}. Observe that the equation can be written as 
\[\sigma_{k}(h_{j}^{i})=\langle X,\nu\rangle^p \overline{\psi}(X)\] where
\[\langle X , \nu \rangle = \frac{\phi^2}{\sqrt{\phi^2+|\nabla'r|^2}},\quad \overline{\psi}=\phi^{-(n+1)}\psi;\] see (2.12) in \cite{Guan-Lin-Ma}.
Since $\Sigma$ is compact, $r$ attains its maximum at some point $z_0$. Since at this point, $\nabla' r = 0$, it follows that
\begin{align*}
h_{j}^{i}&=\frac{1}{\phi^2\sqrt{\phi^2+|\nabla' r|^2}}\left(\delta_{ik}-\frac{r^ir^k}{\phi^2+|\nabla'r|^2}\right)\left(-\phi r_{kj} + 2\phi'r_kr_j+\phi^2\phi'\delta_{kj}\right)\\
&=\frac{1}{\phi^3}\left(-\phi r_{ij}+\phi^2\phi'\delta_{ij}\right) \\
&\geq \frac{\phi'}{\phi}\delta_{ij}
\end{align*} where we have used the fact that $r_{ij} \leq 0$ at $z_0$. Hence, by ellipticity of the equation operator,
\[\psi=\phi^{n+1-p} \cdot \sigma_k(h_{j}^{i}) \geq \phi^{n+1-p} \cdot \sigma_{k}\left(\frac{\phi'}{\phi}\delta_{ij}\right) = \phi^{n+1-p}\cdot \binom{n}{k}\left(\frac{\phi'}{\phi}\right)^{k}=\binom{n}{k}(\phi')^{k}\phi^{n+1-p-k}.\]

Similarly, at the point where $r$ attains its minimum, we have that
\[\psi \leq \binom{n}{k}(\phi')^{k}\phi^{n+1-p-k}.\]

This proof works fine unless $p=1$ and $k=n$, for which we would need the additional condition that $\min \psi>1$.

\end{proof}
\subsection{The $C^1$ Estimate} \label{C^1 estimate}

\begin{theorem}
If $\Sigma=\{(r(z),z): z \in M\}$ is a closed $k$-convex hypersurface in $\overline{M}$ which is strictly star-shaped with respect to the origin and satisfies \eqref{curvature measure equation} for some positive $\psi \in C^2(\Sigma)$, then there exist $C>0$ depending only on $n,k,p, \min_{M} \psi, |\psi|_{C^1}$ and the curvature $\overline{R}$ of $\overline{M}$ such that
\[\max_{M} |\nabla' r| \leq C.\]
\end{theorem}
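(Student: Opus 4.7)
The plan is to recast the desired $C^1$ bound as a positive lower bound on the support function $u = \langle V,\nu\rangle$. In the radial-graph representation one has
\[
u = \frac{\phi(r)^2}{\sqrt{\phi(r)^2 + |\nabla' r|^2}},
\]
so, given the preceding $C^0$ control $0<c_1\le r\le c_2$, bounding $|\nabla' r|$ above on $M$ is equivalent to bounding $u$ away from zero on $\Sigma$. The plan is therefore to apply the maximum principle to an auxiliary function built out of $u$ and $r$ on the closed base $M$.

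First I would test the function
\[
W(z) = -\log u(z) + \mu\,\Phi(r(z))
\]
on $M$, with $\mu>0$ a large constant to be fixed in terms of the admissible data. Let $z_0$ be a maximum point of $W$ and $X_0 = (r(z_0),z_0) \in \Sigma$. Pick an adapted orthonormal frame at $X_0$ diagonalizing $h_{ij}$, so $h_{ij}(X_0) = \kappa_i \delta_{ij}$. By lemma \ref{geometric formulas}, $\nabla_i u = \kappa_i \nabla_{E_i}\Phi$, so the first-order condition $\nabla_i W = 0$ collapses to the pointwise dichotomy $\nabla_{E_i}\Phi = 0$ or $\kappa_i = \mu u$ for each $i$, and in either case yields $\nabla_i u = \mu u\,\nabla_i\Phi$. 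Contracting $\nabla_{ij}W\le 0$ with $F^{ii} = \sigma_k^{ii}$ and substituting the formulas $\nabla_{ij}\Phi = \phi' g_{ij} - u h_{ij}$ and the expression for $\nabla_{ij}u$ from lemma \ref{geometric formulas}, together with the identity
\[
\sum_i F^{ii}\,\nabla_k h_{ii} \;=\; p\,u^{p-1}\psi\,\nabla_k u + u^p\,\nabla_k\psi
\]
obtained by differentiating \eqref{curvature measure equation}, I would arrive at a pointwise inequality at $z_0$ of the schematic form
\[
\sum_i F^{ii}\kappa_i^2 + \mu^2 \sum_i F^{ii}\Phi_i^2 + \mu\phi'\sum_i F^{ii} \;\le\; p\mu u^{p-1}\psi\,|\nabla\Phi|^2 + k\phi'\,u^{p-1}\psi + \mu k\,u^{p+1}\psi + \frac{C}{u}\sum_i F^{ii} + C u^{p-1},
\]
in which $|\nabla\Phi|^2 = \phi^2 - u^2$ follows from $\langle \overline e_0,\nu\rangle = u/\phi$ and $C$ depends only on $\overline R$, $\|\psi\|_{C^2}$, and the $C^0$ bound on $r$.

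The concluding step is to choose $\mu$ large enough that the coercive left-hand side dominates the right unless $u(z_0)$ is already bounded below by a controlled positive constant. The term $\mu\phi'\sum F^{ii}$ absorbs the ambient curvature contribution $(C/u)\sum F^{ii}$, producing a first threshold of the form $u(z_0)\gtrsim C/(\mu\phi')$; the term $\mu^2\sum F^{ii}\Phi_i^2$, which in view of $|\nabla\Phi|^2 = \phi^2 - u^2$ is genuinely coercive when $u$ is small, then absorbs the remaining $u^{p-1}$ and $u^{p+1}$ contributions. Inserting the critical-point identity $\kappa_i = \mu u$ (for indices with $\Phi_i\ne 0$) and using lemma \ref{kappa squared times f_i} to bound $\sum F^{ii}\kappa_i^2$ from below then closes the argument and yields $u(z_0)\ge c_0>0$ with the claimed dependence.

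The main obstacle will be the $p$-dependent contribution $p\mu u^{p-1}\psi\,|\nabla\Phi|^2$, whose sign and blow-up rate as $u\to 0$ change with the sign of $p$. For $p<0$ it is on the favorable side and diverges to $-\infty$ as $u\to 0$, directly forcing the lower bound; for $p\in (0,1]$ it is unfavorable but of strictly lower order than $\mu^2\sum F^{ii}\Phi_i^2$ once $\mu$ is enlarged, and is absorbed by the latter. That a single test function $W$ handles both regimes is precisely what makes the estimate uniform for all $p\ne 0$ without any barrier hypothesis, in contrast to the previous $C^0$ argument where the additional assumption $\min\psi>1$ was needed only in the borderline case $(k,p)=(n,1)$.
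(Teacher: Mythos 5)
Your overall framework is the same as the paper's: recast the $C^1$ bound as a positive lower bound on the support function $u=\langle V,\nu\rangle$, test the quantity $-\log u + \gamma(\Phi)$, and use the first-order critical condition (which forces $h_{1i}=0$ for $i\ge 2$ and $h_{11}=u\gamma'$) to feed back into the contracted second-order inequality. The difference is your choice of $\gamma$: you take $\gamma(\Phi)=\mu\Phi$ (so $\gamma'=\mu>0$, $\gamma''=0$), whereas the paper takes $\gamma(\Phi)=\alpha p/\Phi$, so that $\gamma'=-\alpha p/\Phi^2$ carries the \emph{opposite sign of} $p$. This sign flip is not cosmetic, and its absence creates a genuine gap in the $p>0$ regime.

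Concretely, after differentiating $F=u^p\psi$ and using the first-order condition $u_1=\gamma'u\,\Phi_1$, the $h_{iik}\Phi_k$ contribution produces a term proportional to $\gamma'\,p\,\psi\,u^{p-1}\Phi_1^2$. With your linear $\gamma$ this is $+p\mu\psi\,u^{p-1}(|V|^2-u^2)$, which for $p\in(0,1)$ is positive, diverges as $u\to 0$, and \emph{gets worse as $\mu$ is increased}, so no choice of $\mu$ can close the estimate. The paper's $\gamma$ makes $\gamma' p<0$ automatically, turning this into a favorable term. A second consequence of the sign flip that you lose: with $\gamma'<0$ one gets $h_{11}=u\gamma'\le 0$ at the maximum point, hence $F^{11}=\sigma_{k-1}(\kappa|1)=\sigma_{k-1}(\kappa)-\kappa_1\sigma_{k-2}(\kappa|1)\ge\sigma_{k-1}(\kappa)\gtrsim\sum_i F^{ii}$; this lower bound on $F^{11}$ is what makes your ``coercive'' term $\mu^2 F^{11}\Phi_1^2$ genuinely large. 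With $\gamma'>0$, $\kappa_1=u\mu>0$, and $F^{11}$ has no controlled lower bound in terms of $\sum F^{ii}$, so $\mu^2\sum F^{ii}\Phi_i^2=\mu^2 F^{11}\Phi_1^2$ cannot be used to absorb anything that scales with $\sum F^{ii}$ or with $u^{p-1}$. (For $p<0$ your test function does work, for the same reason the paper's does: the bad $\gamma'p$-term is then of the favorable sign.)

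A smaller but still substantive issue is your treatment of the ambient curvature contribution. You carry it as $(C/u)\sum F^{ii}$ and propose to absorb it into $\mu\phi'\sum F^{ii}$, ``producing a first threshold $u\gtrsim C/(\mu\phi')$'' --- but that threshold depends on $\mu$, and $\mu$ in turn must be chosen to absorb other terms, so the argument is circular as stated. The paper avoids this entirely by fixing a frame with $\overline{e}_1\perp\Span\{E_2,\dots,E_n\}$ and using $\langle V,\overline{e}_1\rangle=0$ together with $\overline{R}_{ijk0}=0$ to exhibit a factor of $u$ inside $\overline{R}_{0i1i}$ itself, so the $1/u$ cancels and the curvature term is simply $\le C\sum F^{ii}$ with $C$ independent of $u$ and $\mu$. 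You should incorporate both this cancellation and the sign-flipping $\gamma=\alpha p/\Phi$ to make the argument go through for $p\in(0,1]$.
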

\begin{remark}
A remarkable feature of this result is that it holds for all $p \neq 0$ without any barrier conditions. See \cite[Lemma 3.1]{Chen-Li-Wang}, \cite[Lemma 5.1]{Guan-Ren-Wang} and \cite[Lemma 3.2]{Spruck-Xiao} for the $C^1$ estimate of the general curvature equation under barrier conditions.
\end{remark}
\begin{proof}
Our proof employs the test function of Chen-Li-Wang \cite[Lemma 3.1]{Chen-Li-Wang} but follows the derivation of Huang-Xu \cite[Lemma 2.3]{Huang-Xu}.
We first observe that 
\[\langle V,\nu\rangle = \frac{\phi(r)^2}{\sqrt{\phi(r)^2+|\nabla' r|^2}}\] and so
this gradient bound is equivalent to $\langle V,\nu \rangle \geq C>0$, due to the $C^0$ estimate.

Consider the test function
\[P=\gamma \left(\Phi\right)-\log u\] where $\gamma$ is to be determined later. Suppose that $P$ attains its maximum at $z_0 \in \Sigma$. If $V$ is parallel to the normal direction $\nu$ at $z_0$, then $\langle V,\nu \rangle=|V|$ and we are done. Assume this is not the case, then we can choose a local orthonormal frame $\{E_1,\ldots,E_n\}$ such that $\langle V,E_1\rangle \neq 0$ and $\langle V, E_i \rangle=0$ for $i \geq 2$. Note also that $V=\langle V, E_1 \rangle E_1 + \langle V,\nu\rangle \nu$.

Then at $z_0$, we have that
\begin{align}
0&=P_i=\gamma'\Phi_i-\frac{u_i}{u} \label{C1 1st critical}\\
0&\geq P_{ii}=\gamma''\Phi_{i}^2+\gamma'\Phi_{ii}-\frac{u_{ii}}{u}+\frac{u_{i}^2}{u^2} \nonumber \\
&=\gamma''\Phi_{i}^2+\gamma'\Phi_{ii}-\frac{u_{ii}}{u}+(\gamma')^2\Phi_{i}^2 \nonumber \\
&=\gamma'\Phi_{ii}-\frac{u_{ii}}{u}+[\gamma''+(\gamma')^2]\Phi_{i}^2\label{C1 2nd critical}
\end{align}
By \eqref{C1 1st critical}, we have that 
\begin{equation}
h_{11}=u\gamma' \quad \text{and} \quad h_{i1}=0 \quad \forall\ i \geq 2. \label{h11}
\end{equation} We can then rotate the coordinate system such that $\{E_1,\ldots,E_n\}$ are the principal curvature directions of the second fundamental form so that $h_{ij}=\kappa_i \delta_{ij}$.

Contracting \eqref{C1 2nd critical} with $F^{ii}$ and apply lemma \ref{geometric formulas}, we have that
\begin{align*}
0 & \geq \gamma'\sum F^{ii} \Phi_{ii}-\frac{1}{u} \sum F^{ii}u_{ii}+[\gamma''+(\gamma')^2]\sum F^{ii}\Phi_{i}^2 \\
&=\gamma'\sum F^{ii}(\phi'-u\kappa_i)-\frac{1}{u}\sum F^{ii} \left[\sum_k(h_{iik}-\overline{R}_{0iik})\Phi_k+\phi'\kappa_i-\kappa_{i}^2u\right]\\
&\quad +[\gamma''+(\gamma')^2]\sum F^{ii}\Phi_{i}^2 \\
&\geq \gamma'\phi'\sum F^{ii} - u \gamma' \cdot k \Psi - \frac{1}{u}\sum F^{ii}h_{ii1}\Phi_1+\frac{1}{u}\sum F^{ii}\overline{R}_{0ii1}\Phi_1-\frac{\phi'}{u}\cdot k \Psi+\sum F^{ii}\kappa_{i}^2 \\
&\quad + [\gamma''+(\gamma')^2]F^{11}\Phi_{1}^2
\end{align*}

Differentiating \eqref{prescribed curvature measure}, we have that
\[F^{ii}h_{ii1}=pu^{p-1}u_1\cdot \psi + u^p \cdot \psi_1.\] Substituting this and $\Psi=u^p\psi$ into the above inequality, we obtain that
\begin{align*}
0&\geq \gamma' \phi'\sum F^{ii} - k\gamma' u^{p+1}\psi-\frac{\Phi_1}{u}u^p\left(p\psi\frac{u_1}{u}+\psi_1\right)\\
&\quad -\frac{\Phi_1}{u}\sum F^{ii}\overline{R}_{0i1i}-k\phi'u^{p-1}\psi+\sum F^{ii}\kappa_{i}^2 + [\gamma''+(\gamma')^2]F^{11}\Phi_{1}^2
\end{align*} where we have also used the skew symmetry property $\overline{R}_{0ii1}=-\overline{R}_{0i1i}$.

By \eqref{C1 1st critical}, we have that 
\[\frac{u_1}{u}=\gamma'\Phi_1.\] The inequality then becomes
\begin{align*}
0&\geq \gamma' \phi'\sum F^{ii} - k\gamma' u^{p+1}\psi-pu^{p-1}\gamma'\psi\Phi_{1}^2-\Phi_1u^{p-1}\psi_1\\
&\quad -\frac{\Phi_1}{u}\sum F^{ii}\overline{R}_{0i1i}-k\phi'u^{p-1}\psi+\sum F^{ii}\kappa_{i}^2 + [\gamma''+(\gamma')^2]F^{11}\Phi_{1}^2 \\
\end{align*}
Now, using
\[|V|^2=\langle V,E_1\rangle^2+\langle V,\nu\rangle^2 \quad \text{i.e.} \quad \Phi_{1}^2=|V|^2-u^2,\] we have
\begin{gather}\label{C1 inequality 1}
\begin{split}
0&\geq [\gamma''+(\gamma')^2][\ |V|^2-u^2]F^{11}+\gamma'\phi'\sum F^{ii} - k \gamma'\psi u^{p+1}-[\ |V|^2-u^2]p\gamma'\psi u^{p-1} \\
&\quad -\Phi_1\psi_1 u^{p-1}-k\phi'\psi u^{p-1}+\sum F^{ii}\kappa_{i}^2-\frac{\Phi_1}{u}\sum F^{ii}\overline{R}_{0i1i}.
\end{split}
\end{gather}
Rearranging the terms in \eqref{C1 inequality 1}, we have that
\begin{gather}\label{C1 inequality 2}
\begin{split}
&\ [\gamma''+(\gamma')^2]F^{11}u^2+(k-p)\gamma'\psi u^{p+1} \\
&\ +u^{p-1}[\ |V|^2p\gamma'\psi + \Phi_1\psi_1 + k \phi'\psi] \\
\geq &\ [\gamma''+(\gamma')^2]\cdot |V|^2 F^{11} + \gamma'\phi'\sum F^{ii}\\
&\ + \sum F^{ii}\kappa_{i}^2-\frac{\Phi_1}{u}\sum F^{ii}\overline{R}_{0i1i}
\end{split}
\end{gather}
Now, choose 
\[\gamma(s)=\frac{\alpha p}{s}\quad \text{i.e.} \quad \text{$\gamma'(s)=-\frac{\alpha p}{s^2}$ and $\gamma''(s)=\frac{2\alpha p}{s^3}$},\] where $\alpha>0$ is some possibly large constant depending on the given value of $p$. We claim that
\begin{claim}
$(k-p)\gamma'\psi u^{p+1}+u^{p-1}[\ |V|^2p\gamma'\psi + \Phi_1\psi_1 + k \phi'\psi] \leq 0$
\end{claim}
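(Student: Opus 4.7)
My plan is to substitute the explicit form of $\gamma'$ into the expression and reduce the claim to a single scalar inequality, then dispatch it with a case split on the sign of $p$. Plugging in $\gamma'(s) = -\alpha p/s^2$ and factoring out $u^{p-1} > 0$, the claim is equivalent to
\begin{equation*}
\gamma'\psi\bigl((k-p)u^2 + p|V|^2\bigr) + \Phi_1\psi_1 + k\phi'\psi \leq 0
\end{equation*}
at the critical point $z_0$. Using the orthogonal decomposition $V = \Phi_1 E_1 + u\nu$, which gives $|V|^2 = \Phi_1^2 + u^2$, the coefficient of $\gamma'\psi$ simplifies to $ku^2 + p\Phi_1^2$, and the desired inequality becomes
\begin{equation*}
-\frac{\alpha\psi}{\Phi^2}\bigl(pku^2 + p^2\Phi_1^2\bigr) + \Phi_1\psi_1 + k\phi'\psi \leq 0.
\end{equation*}
By the $C^0$ estimate, the quantities $|V|$, $\Phi$, $\phi'$, $\psi$, $|\psi|_{C^1}$ and $|\Phi_1|\leq |V|$ are all uniformly controlled, so the tail $\Phi_1\psi_1 + k\phi'\psi$ is bounded in absolute value by a constant $C_0$ depending only on known data.

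When $p \in (0,1]$, the inequalities $k \geq 1 \geq p > 0$ give $pk \geq p^2$, and hence
\begin{equation*}
pku^2 + p^2\Phi_1^2 \;\geq\; p^2(u^2 + \Phi_1^2) \;=\; p^2|V|^2,
\end{equation*}
which is uniformly bounded below by a positive constant thanks to the $C^0$ estimate on $r$. Consequently the $\alpha$-term is at most $-c\alpha$ for some $c > 0$ independent of $\alpha$, and taking $\alpha$ large enough (depending on $n, k, p$, $|\psi|_{C^1}$ and the $C^0$ bounds) closes the claim in this case.

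When $p < 0$ the coefficient $pku^2 + p^2\Phi_1^2 = p^2\Phi_1^2 - |p|ku^2$ has indefinite sign; this is the principal obstacle. I would resolve it by the dichotomy $u^2 \geq \tfrac12 \tfrac{|p|}{|p|+k}|V|^2$ versus $u^2 < \tfrac12 \tfrac{|p|}{|p|+k}|V|^2$. In the upper regime $u$ itself is uniformly bounded below by $\sqrt{\tfrac{|p|}{2(|p|+k)}}|V|$, which yields the desired $C^1$ bound directly via $\langle V,\nu\rangle = \phi^2/\sqrt{\phi^2+|\nabla' r|^2}$, so the claim is simply not needed at $z_0$. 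In the complementary regime $\Phi_1^2 > \tfrac{|p|+2k}{2(|p|+k)}|V|^2$, and a short arithmetic computation yields $p^2\Phi_1^2 - |p|ku^2 \geq \tfrac12 p^2|V|^2 > 0$, so the $\alpha$-term again dominates the tail for $\alpha$ sufficiently large.

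The central difficulty is precisely this sign ambiguity for $p<0$, where the naive dominant-term argument used for $p>0$ breaks down; the dichotomy above isolates the bad region as exactly the one in which the overall $C^1$ estimate is already achieved by $C^0$ considerations alone, so the algebraic inequality in the claim is only ever invoked where the coefficient $pku^2 + p^2\Phi_1^2$ admits a uniform positive lower bound.
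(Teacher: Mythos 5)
Your argument is correct and follows the same overall strategy as the paper's: substitute the explicit $\gamma'(s)=-\alpha p/s^2$, factor out $u^{p-1}>0$, and then split on the sign of $p$, handling $p<0$ by the same dichotomy on $u^2/|V|^2$. The one genuine refinement you introduce is the identity $|V|^2 = \Phi_1^2 + u^2$ to rewrite the coefficient $(k-p)u^2 + p|V|^2$ as $ku^2 + p\Phi_1^2$, which makes the positivity for $p>0$ transparent at a glance; the paper instead keeps the two pieces separate and shows $(k-p)\gamma'\psi u^{p+1}\le 0$ and $|V|^2 p\gamma'\psi + \Phi_1\psi_1 + k\phi'\psi\le 0$ individually for $0<p\le k$. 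For $p<0$ the threshold $u^2 \le \tfrac{|p|}{2(|p|+k)}|V|^2$ you use is exactly the paper's $u^2\le \tfrac{p}{2(p-k)}|V|^2$ after clearing signs, and your explicit observation that the complementary regime already yields the $C^1$ bound without the claim is precisely what the paper compresses into ``where we have assumed that $u^2\le\cdots$''. The only thing you should fix: the $C^1$ theorem is stated for all $p\ne 0$, but your positive case is written only for $p\in(0,1]$, invoking $k\ge 1\ge p$ to get $pk\ge p^2$. What is actually needed is merely $k\ge p$, and in fact for any $p>0$ one has $pku^2+p^2\Phi_1^2\ge \min(pk,p^2)|V|^2>0$, so the same argument covers the whole half-line $p>0$ (including the paper's separately-treated $p>k$ case) once you replace $p^2$ by $\min(pk,p^2)$. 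The paper's first case, by the way, reads ``$p<0$ or $p>t$'' --- a typo for ``$p>k$''.
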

\begin{proof}
If $p<0$ or $p>t$, then 
\begin{align*}
&\ (k-p)\gamma'\psi u^{p+1}+p|V|^2\gamma'\psi u^{p-1}\\
=&\ \gamma'\psi u^{p-1}[(k-p)u^2+p|V|^2] \\
=&\ -\alpha p \frac{\psi}{\Phi^2}u^{p-1}[(k-p)u^2+p|V|^2]\\
\leq &\ -\frac{\alpha p^2 \psi}{2\Phi^2} |V|^2u^{p-1},
\end{align*} where we have assumed that
\[u^2 \leq \frac{p}{2(p-k)}|V|^2.\] The desired inequality follows by choosing $\alpha$ large enough.

If $0<p \leq k$, then we immediately have that
\[(k-p)\gamma'\psi u^{p+1} \leq 0\] and 
\[|V|^2p\gamma'\psi + \Phi_1\psi_1 + k \phi'\psi=-\frac{\alpha p^2}{\Phi^2}|V|^2\psi+\Phi_1\psi_1+k\phi'\psi \leq 0\] by choosing $\alpha$ large enough.
\end{proof}
\begin{claim}
$\sum F^{ii}\kappa_{i}^2-\frac{\Phi_1}{u}\sum F^{ii}\overline{R}_{0i1i} \geq -C \sum F^{ii}$.
\end{claim}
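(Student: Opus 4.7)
The plan is to exploit the non-negativity of $\sum F^{ii}\kappa_i^2$ and to bound the second summand via the ambient curvature of $\overline{M}$. Since $F^{ii}=\sigma_{k-1}(\kappa|i)>0$ for $\kappa\in\Gamma_k$, the first term is non-negative and may simply be dropped; the task then reduces to bounding $-\tfrac{\Phi_1}{u}\sum F^{ii}\overline{R}_{0i1i}\geq -C\sum F^{ii}$. By the Remark following Lemma~\ref{commutator formula}, the components $\overline{R}_{0i1i}$ in the adapted frame depend only on $r$ and $\nabla' r$, and hence $|\overline{R}_{0i1i}|\leq C_{\overline{R}}$ for a constant controlled by the ambient curvature tensor and the $C^{0}$ bound on $r$.

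It then remains to control the factor $|\Phi_1|/u$. From the orthogonal decomposition $V=\Phi_1 E_1+u\nu$ one gets $\Phi_1^{2}+u^{2}=|V|^{2}=\phi(r)^{2}$, which is $C^{0}$-controlled, but the ratio $|\Phi_1|/u$ is itself the quantity encoding the $C^{1}$-estimate being sought and therefore cannot be bounded a priori. My plan is to apply Young's inequality $\tfrac{|\Phi_1|}{u}\leq \tfrac{\epsilon \Phi_1^{2}}{2u^{2}}+\tfrac{1}{2\epsilon}$, arriving at
\[
\sum F^{ii}\kappa_i^{2}-\tfrac{\Phi_1}{u}\sum F^{ii}\overline{R}_{0i1i}\;\geq\;-\tfrac{\epsilon C_{\overline{R}}\Phi_1^{2}}{2u^{2}}\sum F^{ii}\;-\;\tfrac{C_{\overline{R}}}{2\epsilon}\sum F^{ii},
\]
which extracts the desired $-C\sum F^{ii}$ piece at the cost of a residual proportional to $\Phi_1^{2}/u^{2}$.

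The main obstacle is disposing of this residual, since $\Phi_1^{2}/u^{2}=|V|^{2}/u^{2}-1$ is precisely the ratio the $C^{1}$-estimate is trying to control. I expect to absorb it into the positive term $[\gamma''+(\gamma')^{2}]F^{11}\Phi_1^{2}$ already sitting on the right-hand side of \eqref{C1 inequality 2} as the gap between $[\gamma''+(\gamma')^{2}]|V|^{2}F^{11}$ on the RHS and $[\gamma''+(\gamma')^{2}]u^{2}F^{11}$ on the LHS. Since $\gamma''+(\gamma')^{2}\sim \alpha^{2}p^{2}/\Phi^{4}$ can be made arbitrarily large by enlarging the parameter $\alpha$ chosen in the definition of $\gamma$, this absorption is feasible for $\epsilon$ sufficiently small and $\alpha$ sufficiently large. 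In the Euclidean case $\overline{R}\equiv 0$ this residual is absent and the claim is trivial; the warped product setting forces precisely this additional absorption step, after which the combination of this claim with Claim~1 and \eqref{C1 inequality 2} yields the desired lower bound $u\geq c>0$.
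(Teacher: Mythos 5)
Your bounding of $\lvert\overline{R}_{0i1i}\rvert$ by a constant and the reduction via Young's inequality are both fine as far as they go, but the proposed absorption of the residual $\tfrac{\epsilon\Phi_1^2}{u^2}\sum F^{ii}$ into $[\gamma''+(\gamma')^2]\Phi_1^2F^{11}$ does not close. The quantity $\gamma''+(\gamma')^2$, although it can be made large by enlarging $\alpha$, is a \emph{fixed} constant once $\alpha$ is chosen, depending only on the $C^0$ bounds of $r$; by contrast $1/u^2$ is exactly the unbounded ratio the whole $C^1$ estimate is trying to control, so no fixed choice of $\alpha$ and $\epsilon$ can dominate it when $u$ is small. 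Concretely, after substituting your weakened bound into \eqref{C1 inequality 2} and moving the residual to the left, the inequality reads
\begin{equation*}
[\gamma''+(\gamma')^2]F^{11}u^2+\frac{\epsilon C\,\Phi_1^2}{2u^2}\sum F^{ii}\;\geq\;[\gamma''+(\gamma')^2]|V|^2F^{11}+\Bigl(\gamma'\phi'-\tfrac{C}{2\epsilon}\Bigr)\sum F^{ii},
\end{equation*}
and the left-hand side \emph{grows} as $u\to 0$, so no contradiction arises and no lower bound for $u$ can be extracted. Moreover, the entire positive gap $[\gamma''+(\gamma')^2]\Phi_1^2F^{11}$ is precisely what produces the conclusion $u\geq c$ in Claim~3; spending it to absorb your residual leaves nothing behind. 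There is also a second, independent mismatch when $p<0$: in that regime $\kappa_1=u\gamma'>0$, so $F^{11}=\sigma_{k-1}(\kappa|1)$ may be the smallest of the $F^{ii}$, and a term proportional to $F^{11}$ cannot dominate one proportional to $\sum F^{ii}$.

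The paper's argument avoids the residual altogether by a geometric cancellation specific to warped products. Choosing a radial frame $\overline{e}_0,\ldots,\overline{e}_n$ with $\overline{e}_1$ in the plane spanned by $\nu$ and $E_1$, and taking $E_i=\overline{e}_i$ for $i\geq 2$, one has $\nu=\tfrac{u}{\phi}\overline{e}_0+\langle\nu,\overline{e}_1\rangle\overline{e}_1$ and $\langle V,\overline{e}_1\rangle=0$, hence $\langle\nu,\overline{e}_1\rangle\langle E_1,\overline{e}_1\rangle=-u\langle\nu,\overline{e}_1\rangle^2/\Phi_1$. Combined with the structural identity $\overline{R}_{ijk0}=0$ for warped products, this exhibits $\overline{R}_{0i1i}$ as $u$ times a bounded quantity, so the factor $1/u$ in front cancels exactly and the claim follows without any loss. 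Your proof as written has a genuine gap and should be replaced by this cancellation argument (or an equivalent one); the Young-inequality route cannot be salvaged within the present barrier-free framework.
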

\begin{proof}
Note that in \cite{Chen-Li-Wang}, they applied \eqref{h11} to have that $\sum F^{ii}\kappa_{i}^2 \geq F^{11}\kappa_{1}^2 = u^2(\gamma')^2 F^{11}$, but this would cancel out the important term in $[\gamma''+(\gamma')^2]F^{11}u^2$. So here we just throw it away by positivity i.e. $\sum F^{ii}\kappa_{i}^2 \geq 0$.

For the second term, since $V=\langle V,E_1\rangle E_1+\langle V,\nu\rangle\nu$, it follows that $V \perp \Span\{E_2,\ldots,E_n\}$. Since we also have $E_1,\nu \perp \Span \{E_2,\ldots,E_n\}$, we can choose a coordinate system such that $\overline{e}_1 \perp \Span\{E_2,\ldots,E_n\}$, which implies that the pair $\{V,\overline{e}_1\}$ and $\{\nu, E_1\}$ would lie in the same plane, and 
\[\Span\{E_2,\ldots,E_n\}=\Span\{\overline{e}_2,\ldots,\overline{e}_n\}.\]

Thus, by choosing $E_i=\overline{e}_i$ for $2 \leq i \leq n$, we may write
\begin{align*}
\nu&=\langle \nu, \overline{e}_0\rangle \overline{e}_0 + \langle \nu, \overline{e}_1\rangle \overline{e}_1=\frac{u}{\phi}\overline{e}_0+\langle \nu,\overline{e}_1\rangle \overline{e}_1,\\
E_1&=\langle E_1,\overline{e}_0\rangle \overline{e}_0+\langle E_1,\overline{e}_1\rangle \overline{e}_1.
\end{align*} The curvature tensor can then be computed as 
\begin{align*}
\overline{R}_{0i1i}&=\overline{R}(\nu, E_i, E_1, E_i) \\
&=\frac{u}{\phi}\langle E_1,\overline{e}_0\rangle \overline{R}(\overline{e}_0, \overline{e}_i, \overline{e}_0, \overline{e}_i)+\langle \nu, \overline{e}_1\rangle \langle E_1,\overline{e}_1\rangle\overline{R}(\overline{e}_1, \overline{e}_i, \overline{e}_1, \overline{e}_i)\\
&=\frac{u}{\phi}\langle E_1,\overline{e}_0\rangle \overline{R}(\overline{e}_0, \overline{e}_i, \overline{e}_0, \overline{e}_i)-\frac{u\langle \nu, \overline{e}_1\rangle^2}{\langle E_1,V\rangle}\overline{R}(\overline{e}_1, \overline{e}_i, \overline{e}_1, \overline{e}_i)\\
&=u\left(\frac{\langle E_1,\overline{e}_0\rangle}{\phi}\overline{R}(\overline{e}_0, \overline{e}_i, \overline{e}_0, \overline{e}_i)-\frac{\langle \nu, \overline{e}_1\rangle^2}{\langle E_1,V\rangle}\overline{R}(\overline{e}_1, \overline{e}_i, \overline{e}_1, \overline{e}_i)\right),
\end{align*} where the second equality follows from $0=\overline{R}_{ijk0}$ \cite[Lemma 2.1]{Chen-Li-Wang} and the third equality is due to $0=\langle V,\overline{e}_1\rangle$. This cancels out the $\frac{1}{u}$ factor in the front and the result follows.
\end{proof}
With these claims, the inequality \eqref{C1 inequality 2} is reduced to 
\begin{equation}
[\gamma''+(\gamma')^2]F^{11}u^2 \geq [\gamma''+(\gamma')^2]\cdot |V|^2 F^{11} + (\gamma'\phi'+C)\sum F^{ii}. \label{C1 inequality 3}
\end{equation}

Finally, we can finish the derivation by one more claim. 

\begin{claim}
$[\gamma''+(\gamma')^2]\cdot |V|^2 F^{11} + (\gamma'\phi'+C)\sum F^{ii} \geq C F^{11}$.
\end{claim}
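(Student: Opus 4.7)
The plan is to use the explicit form of $\gamma(s)=\alpha p/s$ and to split into the two sign regimes of $p$. A direct computation gives
\[
\gamma''(s)+(\gamma'(s))^2=\frac{\alpha p(2s+\alpha p)}{s^4},
\]
which for $\alpha$ sufficiently large is positive of magnitude $\sim \alpha^2 p^2/\Phi^4$, regardless of the sign of $p$. Combined with $|V|^2=\phi(r)^2\geq \phi(r_1)^2>0$ from the $C^0$ estimate, this yields $[\gamma''+(\gamma')^2]|V|^2\geq c_0\alpha^2$ for some controlled $c_0>0$. When $p<0$, one has $\gamma'=-\alpha p/\Phi^2>0$, so the coefficient $\gamma'\phi'+C$ is automatically non-negative and the second summand on the left-hand side of the claim may simply be discarded; the first summand alone exceeds $CF^{11}$ for $\alpha$ large, which settles this case.

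The substantive case is $p>0$, where $\gamma'<0$ and $\gamma'\phi'+C$ becomes negative of order $-\alpha$, so $\sum F^{ii}$ cannot be discarded and must be compared to $F^{11}$. The key observation is the critical-point identity \eqref{h11}, which forces
\[
\kappa_1=h_{11}=u\gamma'=-\frac{u\alpha p}{\Phi^2}<0
\]
at $z_0$. Since $\kappa\in\Gamma_k$, a standard \Garding\ consequence is $\sigma_{k-2}(\kappa|1)=\partial\sigma_{k-1}/\partial\kappa_1>0$, so the identity
\[
\sigma_{k-1}(\kappa)=\sigma_{k-1}(\kappa|1)+\kappa_1\sigma_{k-2}(\kappa|1)
\]
together with $\kappa_1<0$ gives $F^{11}=\sigma_{k-1}(\kappa|1)\geq \sigma_{k-1}(\kappa)$, and hence
\[
\sum_{i}F^{ii}=(n-k+1)\sigma_{k-1}(\kappa)\leq (n-k+1)F^{11}.
\]
Therefore the second summand of the left-hand side of the claim is bounded below by $-C(n-k+1)\alpha F^{11}$, while the first is at least $c_0\alpha^2 F^{11}$. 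For $\alpha$ large enough (depending on $n$, $k$, $p$ and the known data), the quadratic-vs-linear scaling in $\alpha$ delivers the desired $\geq CF^{11}$.

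The main obstacle is exactly this $p>0$ case: without the sign information on $\kappa_1$ one has no way to control $\sum F^{ii}/F^{11}$, and the $O(\alpha)$ negative term would overwhelm the $O(\alpha^2)$ positive one. Once the claim is proved, substituting it into \eqref{C1 inequality 3} gives $[\gamma''+(\gamma')^2]u^2\geq C$, so $u^2\geq C\Phi^4/\alpha^2$ is bounded below by a positive controlled constant; the $C^1$ estimate then follows via $u=\phi^2/\sqrt{\phi^2+|\nabla'r|^2}$. The degenerate subcase $k=n$ with $p>0$ does not actually enter the analysis: $\kappa_1<0$ is incompatible with $\kappa\in\Gamma_n$, so the maximum of $P$ there must occur where $V\parallel\nu$, making $u=|V|$ trivially.
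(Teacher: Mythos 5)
Your proof is correct and follows essentially the same route as the paper: compute $\gamma''+(\gamma')^2$ explicitly, split on the sign of $p$, and in the $p>0$ case use the critical-point consequence $\kappa_1=u\gamma'<0$ together with $\sigma_{k-1}(\kappa)=\sigma_{k-1}(\kappa|1)+\kappa_1\sigma_{k-2}(\kappa|1)$ to deduce $F^{11}\geq c(n,k)\sum F^{ii}$. The only variation is cosmetic (in the $p<0$ case you extract $CF^{11}$ from the $O(\alpha^2)$ first term while the paper uses the $O(\alpha)$ second term), and your closing observation that for $k=n$, $p>0$ the sign $\kappa_1<0$ contradicts $\kappa\in\Gamma_n$ — so the critical point must have $V\parallel\nu$ — is a welcome clarification that the paper leaves implicit.
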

\begin{proof}
By our choice of $\gamma$, we have that 
\begin{align*}
&\ [\gamma''+(\gamma')^2]\cdot |V|^2 F^{11} + \gamma'\phi'\sum F^{ii} \\
=&\ \left[\frac{2\alpha p}{\Phi^2}+\frac{\alpha^2p^2}{\Phi^4}\right]\cdot |V|^2 F^{11}-\alpha p \frac{\phi'}{\Phi^2}\sum F^{ii}.
\end{align*}

If $p<0$, the first term is still positive by choosing $\alpha$ large enough and the second term will immediately imply that 
\[-\alpha p \frac{\phi'}{\Phi^2}\sum F^{ii} \geq C F^{11}.\]

If $p \geq 0$, then by \eqref{h11} we have that $h_{11}=u\gamma' \leq 0$. Hence, we have that 
\begin{align*}
F^{11}=\sigma_{k-1}(\kappa|1)=\sigma_{k-1}(\kappa)-\kappa_1\sigma_{k-2}(\kappa|1) \geq \sigma_{k-1}(\kappa) \geq c(n,k)\sum F^{ii}
\end{align*} by lemma \ref{sigma_k formulas}. The result then follows by choosing $\alpha$ large enough.
\end{proof}

With this claim, the inequality \eqref{C1 1st critical} yields
\[[\gamma''+(\gamma')^2]F^{11}u^2 \geq C F^{11}\] and the bound $u \geq C$ is obtained by choosing $\alpha$ large enough so that $\gamma''+(\gamma')^2 > 0$.
\end{proof}

\subsection{The $C^2$ Estimate}
\indent

Finally, we derive the $C^2$ estimate for the prescribed curvature measure type equation i.e. 
\[\sigma_{k}(\kappa(X))=\langle X,\nu\rangle^p\psi(X) \quad \text{where $p \in (-\infty,0)\cup (0,1]$}.\]

\begin{proof}[Proof of Theorem \ref{theorem curvature measure}]
For this case, we will use the test function of Spruck-Xiao \cite{Spruck-Xiao}
\[Q=\log \kappa_{\max} - \log (u-a) + \alpha \Phi\] where $u:=\langle X,\nu\rangle$ and $u \geq 2a$, and $\alpha>0$ is a constant to be chosen later.

By proceeding exactly as in section \ref{C^2 estimates 1}, or simply by substituting $N=1$ in \eqref{final inequality}, we obtain that

\begin{gather}\label{final inequality p}
\begin{split}
0&\geq \left(\frac{u}{u-a}-1\right)\sum_{i}F^{ii}\kappa_{i}^2+(\alpha \phi' - C)\sum_{i} F^{ii}+kF\kappa_1-C\alpha-C \\
&\quad -\sum_{p\neq q} \frac{F^{pp,qq}h_{pp1}h_{qq1}}{\kappa_1}-\frac{F^{11}h_{111}^2}{\kappa_{1}^2}+2\sum_{i \neq 1}\frac{F^{ii}h_{1ii}^2}{\kappa_1(\kappa_1-\tilde{\kappa}_i)}+\frac{F^{11}u_{1}^2}{(u-a)^2}\\
&\quad +\sum_{i  \in I} \frac{(2\varepsilon-1)\kappa_1+\tilde{\kappa}_i}{\kappa_1-\tilde{\kappa}_i}\frac{F^{ii}h_{11i}^2}{\kappa_{1}^2}+\sum_{i \in I}\frac{F^{ii}u_{i}^2}{(u-a)^2}+\frac{\Psi_{11}}{\kappa_1} - \frac{1}{u-a} \sum_{k} \left(\sum_{i} F^{ii}h_{iik}\Phi_k\right).
\end{split}
\end{gather}
Recall that the index set $I$ is defined as 
\[I=\{i>m: \text{$\kappa_i<0$ and $\kappa_1+\widetilde{\kappa}_i<0$}\}.\]
There are two major obstacles here: (1) unlike in claim \ref{claim 1}, this time it is not clear if the quotient 
\[\frac{(2\varepsilon-1)\kappa_1+\tilde{\kappa}_i}{\kappa_1-\tilde{\kappa}_i}\] is positive without extra conditions; and (2) it is difficult to handle the large negative term 
\[-\frac{F^{11}h_{111}^2}{\kappa_{1}^2};\] we have to make full use of all the positive terms in \eqref{final inequality p} combined with the special structure of the right-hand side $\Psi=u^p\psi$.

The first trick is to split the $\sum_i \kappa_{i}^2 F^{ii}$ term into
\begin{gather}\label{splitting}
\begin{split}
\frac{a}{u-a}\sum_{i}\kappa_{i}^2 F^{ii}&=\left(\frac{n-1}{n}\frac{a}{u-a}+\frac{1}{n}\frac{a}{u-a}\right)\sum_{i=1}^{n}F^{ii}\kappa_{i}^2. \\
&\geq \frac{n-1}{n}\frac{a}{u-a}\sum_{i=1}^{n} F^{ii}\kappa_{i}^2 + \frac{1}{n}\frac{a}{u-a}F^{11}\kappa_{1}^2 + \frac{1}{n}\frac{a}{u-a}\sum_{i \in I} F^{ii}\kappa_{i}^2
\end{split}
\end{gather}

We claim that 
\begin{claim} \label{claim 2}
\[\frac{1}{n}\frac{a}{u-a}\sum_{i \in I}F^{ii}\kappa_{i}^2+\sum_{i  \in I} \frac{(2\varepsilon-1)\kappa_1+\tilde{\kappa}_i}{\kappa_1-\tilde{\kappa}_i}\frac{F^{ii}h_{11i}^2}{\kappa_{1}^2}+ \sum_{i \in I}\frac{F^{ii}u_{i}^2}{(u-a)^2} \geq 0\] and 
\[\frac{1}{n}\frac{a}{u-a}F^{11}\kappa_{1}^2-\frac{F^{11}h_{111}^2}{\kappa_{1}^2}+\frac{F^{11}u_{1}^2}{(u-a)^2} \geq 0,\] where $I=\{i>m: \kappa_1+\widetilde{\kappa}_i<0\}$.
\end{claim}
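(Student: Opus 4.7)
The plan is to use two identities that hold at the critical point $X_0$: first, the first-order critical equation \eqref{1st critical 1} specialized to $N=1$, which gives $h_{11i}/\kappa_1 = u_i/(u-a) - \alpha \Phi_i$; and second, the formula $u_i = \kappa_i \Phi_i$, which follows from lemma \ref{geometric formulas} once the frame is chosen so that $h_{ij} = \kappa_i \delta_{ij}$ at $X_0$. Combining the two gives
\[
\frac{h_{11i}}{\kappa_1} = \Phi_i \left(\frac{\kappa_i}{u-a} - \alpha\right),
\]
which eliminates both the third-order derivatives $h_{11i}$ and the first-order derivatives $u_i$ in favor of the scalars $\kappa_i$, $\Phi_i$, $\alpha$, and $u-a$. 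The two inequalities of the claim then reduce to elementary algebraic estimates.

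For the second inequality, substitution into $\frac{1}{n}\frac{a}{u-a}F^{11}\kappa_1^2 - \frac{F^{11}h_{111}^2}{\kappa_1^2} + \frac{F^{11}u_1^2}{(u-a)^2}$ causes the $u_1^2/(u-a)^2$ piece to cancel the corresponding piece of $h_{111}^2/\kappa_1^2$, leaving
\[
F^{11}\left[\frac{a/n}{u-a}\kappa_1^2 + \frac{2\alpha \Phi_1^2 \kappa_1}{u-a} - \alpha^2 \Phi_1^2\right].
\]
Since $\kappa_1 > 0$ the cross-term is nonnegative, and the quadratic $\frac{a/n}{u-a}\kappa_1^2$ dominates the bounded $\alpha^2 \Phi_1^2$ once $\kappa_1$ is sufficiently large, which we may always assume.

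For the first inequality, the same substitutions reduce each summand, after dividing by $F^{ii}\Phi_i^2$, to
\[
\frac{a/n}{(u-a)\Phi_i^2}\kappa_i^2 + \frac{\kappa_i^2}{(u-a)^2} - \theta_i \left(\frac{\kappa_i}{u-a} - \alpha\right)^2,
\]
where $\theta_i := \frac{(2\varepsilon-1)\kappa_1 + \tilde{\kappa}_i}{\kappa_1 - \tilde{\kappa}_i}$ is negative for $i \in I$ by construction. A short elementary check (write $t = -\tilde{\kappa}_i > 0$ and rearrange) shows that $|\theta_i| \leq 1$ reduces to $\varepsilon \geq 0$ and therefore holds unconditionally. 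Since $\kappa_i < 0$ for $i \in I$, the cross-term $-2\alpha\kappa_i/(u-a)$ produced by expanding the square is nonnegative; absorbing the $\kappa_i^2/(u-a)^2$ contributions then leaves a lower bound of the form $\frac{a/n}{u-a}\kappa_i^2 - \alpha^2 \Phi_i^2$, which is nonnegative because $i \in I$ forces $|\kappa_i|$ to be comparable to $\kappa_1$ and hence large.

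The main obstacle will be bookkeeping: tracking the sign of $\theta_i$ and of the various cross-terms produced by squaring, and identifying the correct regrouping so that the dominant positive quadratic in $\kappa_i$ (respectively $\kappa_1$) absorbs the bounded negative $\alpha^2 \Phi^2$ term. Once the two critical-point identities above are in place, the rest is elementary algebra.
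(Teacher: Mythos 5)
Your overall plan — use the critical equation $h_{11i}/\kappa_1 = u_i/(u-a)-\alpha\Phi_i$, the identity $u_i=\kappa_i\Phi_i$, the bound $|\theta_i|\le 1$, and the fact that $|\kappa_i|\gtrsim\kappa_1$ for $i\in I$ — matches the paper's. Your proof of the second inequality is correct: the cancellation leaves $F^{11}[\tfrac{a/n}{u-a}\kappa_1^2+\tfrac{2\alpha\Phi_1^2\kappa_1}{u-a}-\alpha^2\Phi_1^2]$ and the middle term really is nonnegative because $\kappa_1>0$.

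There is, however, a sign error in the first inequality that makes a genuine piece of the argument disappear. After substituting $h_{11i}/\kappa_1=\Phi_i(\tfrac{\kappa_i}{u-a}-\alpha)$ into the term $\theta_i\,F^{ii}h_{11i}^2/\kappa_1^2$ and dividing by $F^{ii}\Phi_i^2$, you should get $+\,\theta_i\left(\tfrac{\kappa_i}{u-a}-\alpha\right)^2$, not $-\,\theta_i(\cdots)^2$; since $\theta_i<0$ for $i\in I$, this is a \emph{negative} term. Expanding the square under the correct sign, the $\kappa_i^2/(u-a)^2$ piece is tamed exactly by $|\theta_i|\le1$ (so your elementary check is indeed used), but the cross-term becomes $-\theta_i\cdot\tfrac{2\alpha\kappa_i}{u-a}=|\theta_i|\tfrac{2\alpha\kappa_i}{u-a}$, which is \emph{negative} for $\kappa_i<0$ — the opposite of what you assert. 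Your claimed reduction to $\tfrac{a/n}{u-a}\kappa_i^2-\alpha^2\Phi_i^2$ silently drops this negative linear-in-$|\kappa_i|$ term, so the displayed lower bound does not follow. The statement is still true, because $\tfrac{a/n}{u-a}\kappa_i^2$ also absorbs a linear term $-C\alpha|\kappa_i|$ once $\kappa_1$ (hence $|\kappa_i|$, $i\in I$) is large, which is exactly what the paper's proof does: it never divides by $\Phi_i^2$ (avoiding the degenerate $\Phi_i=0$ case you glossed over), keeps $u_i$ abstract until the end, cancels $u_i^2/(u-a)^2$ against $-(u_i/(u-a)-\alpha\Phi_i)^2$, and then bounds the leftover $2\alpha u_i\Phi_i/(u-a)-\alpha^2\Phi_i^2$ from below by $-C\alpha|\kappa_i|-C\alpha^2$, which is dominated by $\tfrac{a/n}{u-a}\kappa_i^2$. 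Fix the sign, keep the absorption step, and your argument aligns with the paper's.
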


\begin{remark}
The proof for this claim is in fact independent of this $\frac{1}{n}$ coefficient. The reason for the splitting \eqref{splitting} is that even after we have used parts of the sum to prove claim \ref{claim 2}, we are still left with a complete sum $\frac{n-1}{n}\frac{a}{u-a}\sum F^{ii}\kappa_i$ or simply the term $F^{11}\kappa_{1}^2$, which are both of the same order as $\kappa_1$ and will crucially be used in the subsequent derivation. That is, the most important is not the magnitude of its coefficient but merely its presence.
\end{remark}

\begin{proof}[proof of the claim]
Observe the trivial fact that 
\[\frac{(2\varepsilon-1)\kappa_1+\tilde{\kappa}_i}{\kappa_1-\tilde{\kappa}_i} \geq -1\] and recall from the first order critical equation \eqref{1st critical 1} that
\[\frac{h_{11i}^2}{\kappa_{1}^2}=\left(\frac{u_i}{u-a}-\alpha \Phi_i\right)^2.\]

Also, note that for $i \in I$, we have $|\kappa_i| \geq \kappa_1$. It is then straightforward to have that
\begin{align*}
&\frac{1}{n}\frac{a}{u-a}\sum_{i \in I}\kappa_{i}^2f_i+\sum_{i  \in I} \frac{(2\varepsilon-1)\kappa_1+\tilde{\kappa}_i}{\kappa_1-\tilde{\kappa}_i}\frac{F^{ii}h_{11i}^2}{\kappa_{1}^2}+ \sum_{i \in I}\frac{F^{ii}u_{i}^2}{(u-a)^2} \\
\geq & \sum_{i \in I} \frac{1}{n} \frac{a}{u-a}F^{ii}\kappa_{i}^2-\frac{F^{ii}h_{11i}^2}{\kappa_{1}^2}+\frac{F^{ii}u_{i}^2}{(u-a)^2} \\
= & \sum_{i \in I} \frac{1}{n} \frac{a}{u-a}F^{ii}\kappa_{i}^2-F^{ii}\left(\frac{u_i}{u-a}-\alpha \Phi_i\right)^2+\frac{F^{ii}u_{i}^2}{(u-a)^2} \\
= & \sum_{i \in I} \frac{1}{n} \frac{a}{u-a}F^{ii}\kappa_{i}^2+2\alpha F^{ii} \frac{u_i}{u-a}\Phi_i-\alpha^2F^{ii}\Phi_{i}^2 \\
\geq & \sum_{i \in I} \frac{1}{n} \frac{a}{u-a}F^{ii}|\kappa_{i}|^2-C\alpha F^{ii}|\kappa_i|-C\alpha^2 F^{ii} \\
\geq & 0 \quad \text{by assuming $\kappa_1$ is sufficiently large.}
\end{align*} 

The proof for the second inequality is very similar:
\begin{align*}
&\ \frac{1}{n} \frac{a}{u-a}F^{11}\kappa_{1}^2-\frac{F^{11}h_{111}^2}{\kappa_{1}^2}+\frac{F^{11}u_{1}^2}{(u-a)^2} \\
\geq &\ \frac{1}{n} \frac{a}{u-a}F^{11}\kappa_{1}^2-C\alpha F^{11}\kappa_1-C\alpha^2 F^{11} \\
\geq &\ 0 \quad \text{by assuming $\kappa_1$ is sufficiently large.}
\end{align*}
\end{proof}

With this claim, the inequality \eqref{final inequality p} implies that
\begin{gather} \label{p inequality 1}
\begin{split}
0 & \geq \frac{n-1}{n}\frac{a}{u-a}\sum_{i=1}^{n} F^{ii}\kappa_{i}^2 + (\alpha \phi'- C)\sum_{i=1}^{n} F^{ii} \\
&\quad - \sum_{p \neq q} \frac{F^{pp,qq}h_{pp1}h_{qq1}}{\kappa_1}+2\sum_{i \neq 1} \frac{F^{ii}h_{ii1}^2}{\kappa_1(\kappa_1-\tilde{\kappa}_i)} \\
&\quad +\frac{\Psi_{11}}{\kappa_1} - \frac{1}{u-a} \sum_{k} \left(\sum_{i} F^{ii}h_{iik}\Phi_k\right)+kF\kappa_1-C\alpha-C.
\end{split}
\end{gather}

We can proceed to invoke the special structure of $\Psi$ to handle the first two terms in the third line of \eqref{p inequality 1}. We first find, by lemma \ref{geometric formulas}, the first order equation \eqref{1st critical 1} and the expression $F=\Psi=u^{p}\psi$, that
\begin{align*}
\Psi_{11}&=(u^p\psi)_{11}\\
&=\psi\left[pu^{p-1}u_{11}+p(p-1)u^{p-2}u_{1}^2\right]+2\psi_1\cdot pu^{p-1}u_1+u^p\cdot \psi_{11}\\
&=\psi\cdot pu^{p-1}\left[\sum_{k}(h_{11k}-\overline{R}_{011k})\Phi_k+\phi'\kappa_1-\kappa_{1}^2u\right]\\
&\quad +\psi \cdot p(p-1)u^{p-2}u_{1}^2+2p\psi_1u^p\frac{u_{1}}{u}+u^p\psi_{11}\\
&\geq \psi\cdot pu^{p-1}\sum_k h_{11k}\Phi_k-\psi\cdot pu^{p}\kappa_{1}^2+\psi\cdot p(p-1)u^{p-2}u_{1}^2-C\kappa_1-C\\
&\geq \frac{pF}{u}\sum_k h_{11k}\Phi_k-pF\kappa_{1}^2+p(p-1)F\frac{u_{1}^2}{u^2}-C\kappa_1-C.
\end{align*} for some $C$ depending on $p, \overline{R}, \norm{\Sigma}_{C^1}$ and $\norm{\psi}_{C^2}$.
Then, by noting that
\begin{align*}
&\ - \frac{1}{u-a} \sum_{k} \left(\sum_{i} F^{ii}h_{iik}\Phi_k\right) + \frac{pF}{u}\sum_k \frac{h_{11k}}{\kappa_1}\Phi_k\\
=&\ -\sum_k \frac{F_k\Phi_k}{u-a}+\frac{pF}{u}\sum_k \frac{h_{11k}}{\kappa_1}\Phi_k \\
=&\ -\sum_{k} \left(\psi \cdot pu^{p-1}u_k+u^p\cdot \psi_k\right)\frac{\Phi_k}{u-a}+\frac{pF}{u}\sum_{k}\left(\frac{u_k}{u-a}-\alpha \Phi_k\right)\Phi_k \\
= &\ -\frac{pF}{u}\sum_{k} \frac{u_k\Phi_k}{u-a}+\frac{pF}{u}\sum_{k}\frac{u_k\Phi_k}{u-a}-u^{p}\sum_{k}\frac{\psi_k\Phi_k}{u-a}-\alpha \frac{pF}{u}\sum_{k} \Phi_{k}^2 \\
\geq &-C\alpha-C,
\end{align*} it follows from \eqref{p inequality 1} that 

\begin{gather} \label{p inequality 2}
\begin{split}
0 & \geq \frac{n-1}{n}\frac{a}{u-a}\sum_{i=1}^{n} F^{ii}\kappa_{i}^2 + (\alpha \phi'- C)\sum_{i=1}^{n} F^{ii} \\
&\quad - \sum_{p \neq q} \frac{F^{pp,qq}h_{pp1}h_{qq1}}{\kappa_1}+2\sum_{i \neq 1} \frac{F^{ii}h_{1ii}^2}{\kappa_1(\kappa_1-\tilde{\kappa}_i)}\\
&\quad +(k-p)F\kappa_1+p(p-1)\frac{F}{\kappa_1}\frac{u_{1}^2}{u^2}-C\alpha-C.
\end{split}
\end{gather}

It remains to handle the second line and the term 
\[p(p-1)\frac{F}{\kappa_1}\frac{u_{1}^2}{u^2},\] which can be negative if $p \in (0,1)$.

\subsubsection{Case A: $\kappa_{2}\cdots\kappa_{k-1} \geq \alpha^{-\frac{k-2}{n}}$}

\indent

By concavity of the operator $\sigma_{k}^{1/k}$ i.e.
\[\frac{1}{k}\sigma_{k}^{\frac{1}{k}-1}\sum_{p \neq q} \sigma_{k}^{pp,qq}h_{pp1}h_{qq1}+\frac{1}{k}\left(\frac{1}{k}-1\right)\sigma_{k}^{\frac{1}{k}-2}\sum_{p,q}\sigma_{k}^{pp}h_{pp1}\sigma_{k}^{qq}h_{qq1} \leq 0,\] we are able to obtain that 
\[-\sum_{p\neq q} F^{pp,qq}h_{pp1}h_{qq1} \geq -\frac{k-1}{k}\frac{F_{1}^2}{F} \geq -C\kappa_{1}^2.\]

For the sum $\sum F^{ii}$, we use lemma \ref{sigma_k formulas} and lemma \ref{sigma_l} to have that
\[\sum_{i=1}^{n} F^{ii}=(n-k+1)\sigma_{k-1}(\kappa)\geq c(n,k)\kappa_1\cdots\kappa_{k-1}.\]

Hence,
\begin{align*}
&\ - \sum_{p \neq q} \frac{F^{pp,qq}h_{pp1}h_{qq1}}{\kappa_1}+ (\alpha \phi'- C)\sum_{i=1}^{n} F^{ii}+p(p-1)\frac{F}{\kappa_1}\frac{u_{1}^2}{u^2} \\
\geq &\ -C\kappa_1 + (\alpha\phi' - C)c(n,k)\kappa_1\cdots\kappa_{k-1}+p(p-1)\frac{F}{\kappa_1}\frac{u_{1}^2}{u^2}\\
\geq &\ -C\kappa_1+C\alpha^{\frac{n-k+2}{n}}\kappa_1 \\
\geq &\ 0 \quad \text{by choosing $\alpha$ sufficiently large,}
\end{align*} where the estimate $p(p-1)\frac{F}{\kappa_1}\frac{u_{1}^2}{u^2}\geq -C\kappa_1$ is given in remark \ref{upper bound for p}. Now, the inequality \eqref{p inequality 2} along with lemma \ref{kappa squared times f_i}
yields
\begin{align*}
0 &\geq  \frac{n-1}{n}\frac{a}{u-a}\sum_{i=1}^{n} F^{ii}\kappa_{i}^2+(k-p)F\kappa_1-C\alpha-C \\
  &\geq CF\kappa_1-C\alpha-C
\end{align*} as desired.

\subsubsection{Case B: $\kappa_{2}\cdots\kappa_{k-1} \leq \alpha^{-\frac{k-2}{n}}$}

\indent

For this case, we do not have a large enough lower bound for $\sum F^{ii}$ and this is where we have to restrict the range of $p$. We will proceed by the same iteration argument as in section \ref{semi-convex section} and apply Lu's inequality \eqref{Lu's inequality}. Let $\delta=1/3$, $\delta_0=\min\{k/n,1/2\}$ and let $\varepsilon>0$ to be chosen later. We consider two sub-cases according to lemma \ref{Lu}.

\textbf{Case B.1.} For each $2 \leq l \leq k$, there exists some $\delta_l>0$ such that $\kappa_{l} \geq \delta_l \kappa_1$.

\indent

Then we immediately have that 
\[\alpha^{-\frac{k-2}{n}} \geq \kappa_2\cdots \kappa_{k-1} \geq \delta_2 \cdots \delta_{k-1}\kappa_{1}^{k-1}\] and the estimate for $\kappa_1$ follows.

\textbf{Case B.2.} There exists $m \leq l<k$ such that \eqref{Lu's inequality} holds.

\indent

That is, we have
\[-\sum_{p \neq q} \frac{\sigma_{k}^{pp,qq}\xi_{p}\xi_{q}}{\sigma_k}+\frac{\left(\sum_{i}\sigma_{k}^{ii}\xi_i\right)^2}{\sigma_{k}^2} \geq (1-\varepsilon)\frac{\xi_{1}^2}{\kappa_{1}^2}-\delta_0\sum_{i>l}\frac{\sigma_{k}^{ii}\xi_{i}^2}{\kappa_1\sigma_{k}}.\]

Substituting $\xi_{i}=h_{ii1}$, the second line in \eqref{p inequality 2} then becomes
\begin{align*}
&\ - \sum_{p \neq q} \frac{F^{pp,qq}h_{pp1}h_{qq1}}{\kappa_1}+2\sum_{i \neq 1} \frac{F^{ii}h_{1ii}^2}{\kappa_1(\kappa_1-\tilde{\kappa}_i)} \\
\geq &\ (1-\varepsilon)\sigma_k\frac{h_{111}^2}{\kappa_{1}^3}+\sum_{i>l} \left[\frac{1}{\kappa_1(\kappa_1-\tilde{\kappa}_i)}-\frac{\delta_0}{\kappa_{1}^2}\right]F^{ii}h_{ii1}^2-\frac{F_{1}^2}{\kappa_1\sigma_{k}}-\frac{C_{\overline{R}}}{\kappa_1}\sum F^{ii}.
\end{align*}
By lemma \ref{negative kappa} and our choice of $\delta_0$, we have that 
\[\frac{1}{\kappa_1(\kappa_1-\tilde{\kappa}_i)}-\frac{\delta_0}{\kappa_{1}^2} \geq 0.\]

Therefore, by lemma \ref{kappa squared times f_i}, the first order equation \eqref{1st critical 1} and the expression $F=\Psi=u^p\psi$, we have that
\begin{align*}
&\ \frac{n-1}{n}\frac{a}{u-a}\sum_{i=1}^{n} F^{ii}\kappa_{i}^2+C\alpha\sum F^{ii}- \sum_{p \neq q} \frac{F^{pp,qq}h_{pp1}h_{qq1}}{\kappa_1}+2\sum_{i \neq 1} \frac{F^{ii}h_{1ii}^2}{\kappa_1(\kappa_1-\tilde{\kappa}_i)}+p(p-1)\frac{F}{\kappa_1}\frac{u_{1}^2}{u^2} \\
\geq &\ \frac{n-1}{n}\frac{a}{u-a}\sum_{i=1}^{n} F^{ii}\kappa_{i}^2+(1-\varepsilon)\sigma_k\frac{h_{111}^2}{\kappa_{1}^3}-\frac{F_{1}^2}{\kappa_1\sigma_{k}}+p(p-1)\frac{F}{\kappa_1}\frac{u_{1}^2}{u^2}\\
= &\ CF\kappa_1 + (1-\varepsilon)\frac{F}{\kappa_1} \left(\frac{u_{1}}{u-a}-\alpha\Phi_1\right)^2 - \frac{(\psi\cdot pu^{p-1}u_1+u^p\psi_1)^2}{\kappa_1\sigma_k}+p(p-1)\frac{F}{\kappa_1}\frac{u_{1}^2}{u^2} \\
= &\ CF\kappa_1+ (1-\varepsilon)\frac{F}{\kappa_1}\left(\frac{u_{1}^2}{(u-a)^2}-2\alpha\frac{u_1\Phi_1}{u-a}+\alpha^2\Phi_{1}^2\right) \\
&\ - \frac{1}{\kappa_1\sigma_k}\left(\psi^2p^2u^{2p-2}u_{1}^2+2\psi p u^{2p-1}u_1\psi_1+u^{2p}\psi_{1}^2\right)+p(p-1)\frac{F}{\kappa_1}\frac{u_{1}^2}{u^2}\\
\geq &\ CF\kappa_1+\left[(1-\varepsilon)\frac{F}{\kappa_1}\frac{u_{1}^2}{(u-a)^2}-C\alpha\right]-p^2\frac{F}{\kappa_1}\frac{u_{1}^2}{u^2}-C+p(p-1)\frac{F}{\kappa_1}\frac{u_{1}^2}{u^2}\\
=&\ CF\kappa_1 + \left[(1-\varepsilon)\frac{F}{\kappa_1}\frac{u_{1}^2}{(u-a)^2}-p\frac{F}{\kappa_1}\frac{u_{1}^2}{u^2}\right]+\left[-p^2\frac{F}{\kappa_1}\frac{u_{1}^2}{u^2}+p^2\frac{F}{\kappa_1}\frac{u_{1}^2}{u^2}\right]-C\alpha-C\\
=&\ CF \kappa_1 -\varepsilon \frac{F}{\kappa_1}\frac{u_{1}^2}{(u-a)^2} + \left[\frac{F}{\kappa_1}\frac{u_{1}^2}{(u-a)^2}-p\frac{F}{\kappa_1}\frac{u_{1}^2}{u^2}\right]-C\alpha-C\\
\geq &\ CF\kappa_1-C\varepsilon F\kappa_1 + (1-p)\frac{F}{\kappa_1}\frac{u_{1}^2}{u^2}- C\alpha - C \\
\end{align*}

With this estimate, \eqref{p inequality 2} then implies that 
\begin{equation}
0 \geq (C-C\varepsilon)F\kappa_1+(1-p)\frac{F}{\kappa_1}\frac{u_{1}^2}{u^2}+(k-p) F\kappa_1 - C\alpha - C. \label{p inequality 3}
\end{equation}

If $p \leq 1$, then the proof is complete by choosing a sufficiently small $\varepsilon>0$.

\end{proof}
\begin{remark}\label{upper bound for p}
It can be clearly seen that a sufficient condition for the estimate to hold is
\begin{equation}
(1-p)\frac{F}{\kappa_1}\frac{u_{1}^2}{u^2}+(k-p) F\kappa_1 \geq 0. \label{p k inequality}
\end{equation}

Note that
\[u_{1}^2=\kappa_{1}^2\Phi_{1}^2=\kappa_{1}^2(|V|^2-u^2),\] so it follows that
\[(1-p)\frac{F}{\kappa_1}\frac{u_{1}^2}{u^2}\geq (1-p)\left(\frac{1}{\varepsilon_0}-1\right)F\kappa_1,\] where 
\[\varepsilon_0:=\inf_{\Sigma} \frac{u^2}{|V|^2}.\]

Hence, the inequality \eqref{p k inequality} holds if 
\[(1-p)\left(\frac{1}{\varepsilon_0}-1\right)+(k-p) \geq 0,\] which is true if and only if
\[p \leq 1+\varepsilon_0(k-1).\] This shows that our estimate, in fact, holds for a wider range $p \leq 1+C$ for some $C>0$ depending on $k$ and $\norm{\Sigma}_{C^1}$. Taking $k=2$ will recover Chen's upper bound for $p$ in \cite[(4.32)-(4.33)]{Chen}.

However, since this constant is not universal, we shall not state it explicitly in our theorem.

\end{remark}

\bigskip

\textbf{Funding.} No funding was received to assist with the preparation of this manuscript.

\bigskip

\textbf{Data availability.} Data sharing not applicable to this article as no datasets were generated or analyzed during
the current study.
\bibliography{refs}
\end{document}